\definecolor{lsupurple}{RGB}{70,29,124}
\definecolor{lsugold}{RGB}{253,208, 35}
\newtheorem{theorem}{Theorem}[section]
\newtheorem{lemma}[theorem]{Lemma}
\newtheorem{corollary}[theorem]{Corollary}
\newtheorem*{corollaryconj*}{Corollary of Conjecture \ref{conjecture:Signature}}
\newtheorem{proposition}[theorem]{Proposition}
\newtheorem{conjecture}[theorem]{Conjecture}
\newtheorem{question}[theorem]{Question}
\theoremstyle{definition}
\newtheorem{definition}[theorem]{Definition}
\newtheorem{example}[theorem]{Example}
\theoremstyle{remark}
\numberwithin{equation}{section}
\def\alt{\operatorname{alt}}
\def\dalt{\operatorname{dalt}}
\def\tb{\operatorname{tb}}
\def\mtb{\overline{\operatorname{tb}}}
\def\minus{\scalebox{0.75}[1.0]{$-$}}
\definecolor{lsupurple}{RGB}{70,29,124}
\definecolor{lsugold}{RGB}{253,208, 35}
\begin{document}

\title{Extremal Khovanov homology of Turaev genus one links}

\author{Oliver T. Dasbach}
\address{Department of Mathematics\\
Louisiana State University\\
Baton Rouge, LA}
\email{kasten@math.lsu.edu}

\author{Adam M. Lowrance}
\address{Department of Mathematics\\
Vassar College\\
Poughkeepsie, NY} 
\email{adlowrance@vassar.edu}
\thanks{The first author was supported in part by NSF grant DMS-1317942. The second author was supported in part by Simons Collaboration Grant for Mathematicians no. 355087 and NSF grant DMS-1811344.}

\subjclass{}
\date{}

\begin{abstract}
The Turaev genus of a link can be thought of as a way of measuring how non-alternating a link is. A link is Turaev genus zero if and only if it is alternating, and in this viewpoint, links with large Turaev genus are very non-alternating.
In this paper, we study Turaev genus one links, a class of links which includes almost alternating links. We prove that
the Khovanov homology of a Turaev genus one link is isomorphic to $\mathbb{Z}$ in at least one of its extremal quantum gradings. As an application, we compute or nearly compute the maximal Thurston Bennequin number of a Turaev genus one link. 
\end{abstract}

\maketitle

\section{Introduction}

\subsection{Turaev genus one and almost alternating links}
The Turaev surface associated to a link diagram is a closed, oriented surface that has close ties to the Jones polynomial of the link. The Turaev genus $g_T(L)$ of a link $L$ is the minimum genus of any Turaev surface of a diagram of $L$. Turaev \cite{Turaev:Jones} first constructed this surface to give a topological simplification of the proof that the span of the Jones polynomial is a lower bound on the crossing number of the link, which implies Tait's conjecture that reduced alternating diagrams have minimum crossing number. 

The Turaev genus of a link can be thought of as giving a filtration on all links where a link with large Turaev genus is qualitatively far away from being alternating. Alternating links are precisely those links with Turaev genus zero, and in this viewpoint, links of Turaev genus one are close to being alternating. Armond, Lowrance \cite{ArmLow:Turaev} and independently Kim \cite{Kim:TuraevClassification} classified Turaev genus one links by proving that each such link has a diagram as in Figure \ref{figure:tg1}. Using that classification, Dasbach and Lowrance \cite{DasLow:TuraevJones} proved that either the first or last coefficient of the Jones polynomial of a Turaev genus one link has absolute value one. It is this result that we generalize to Khovanov homology in the present paper. In order to do so, we will need to introduce almost alternating and $A$- and $B$-adequate links.

Adams et al. \cite{Adams:Almost} defined an \textit{almost alternating link} to be a non-alternating link with a diagram that can be transformed into an alternating diagram via a single crossing change. Such a diagram is called an \textit{almost alternating diagram}. A generic almost alternating diagram can be decomposed into the crossing that is changed to obtain an alternating diagram, called the \textit{dealternator}, and an alternating $2$-tangle $R$ as in Figure \ref{figure:aadiagram}. Let $u_1$ and $u_2$ be the two regions incident to the dealternator $\tikz[baseline=.6ex, scale = .4]{
\draw (0,0) -- (.3,.3);
\draw (.7,.7) -- (1,1);
\draw (0,1) -- (1,0);
}
~$ that are joined by an $A$-resolution$~ \tikz[baseline=.6ex, scale = .4]{
\draw[rounded corners = 1mm] (0,0) -- (.5,.45) -- (1,0);
\draw[rounded corners = 1mm] (0,1) -- (.5,.55) -- (1,1);
}~$
and let $v_1$ and $v_2$ be the two regions incident to the dealternator that are joined by a $B$-resolution $~ \tikz[baseline=.6ex, scale = .4]{
\draw[rounded corners = 1mm] (0,0) -- (.45,.5) -- (0,1);
\draw[rounded corners = 1mm] (1,0) -- (.55,.5) -- (1,1);
}~$. Suppose that the regions of the link diagram are checkerboard colored with $v_1$ and $v_2$ being colored black, while $u_1$ and $u_2$ are colored white.

\begin{definition}
\label{definition:ABalmostalternating}
An almost alternating diagram is \textit{$A$-almost alternating} if it satisfies the following conditions.
\begin{itemize}
\item [(1)] The regions $u_1$ and $u_2$ are distinct, and the regions $v_1$ and $v_2$ are distinct.
\item [(2)] There is no crossing in $R$ that is in the boundary of $u_1$ and $u_2$, and there is no crossing in $R$ that is in the boundary of $v_1$ and $v_2$.
\item [(3A)] There is no white region in $R$ that shares a crossing with each of $u_1$ and $u_2$.
\end{itemize}
A link with an $A$-almost alternating diagram is an \textit{$A$-almost alternating link}. An almost alternating diagram is \textit{$B$-almost alternating} if it satisfies conditions (1) and (2) above as well as condition (3B).
\begin{itemize}
\item [(3B)] There is no black region in $R$ that shares a crossing with each of $v_1$ and $v_2$.
\end{itemize}
A link with a $B$-almost alternating diagram is a \textit{$B$-almost alternating link}. 
\end{definition}
If an almost alternating diagram fails to satisfy condition (1) or (2), then the link is alternating (see Figure \ref{figure:Reducible}). In Theorem \ref{theorem:ABaa} we show that every almost alternating link is either $A$-almost alternating or $B$-almost alternating. See Figure \ref{figure:aadiagram} for an example of a generic almost alternating diagram and an $A$-almost alternating knot.
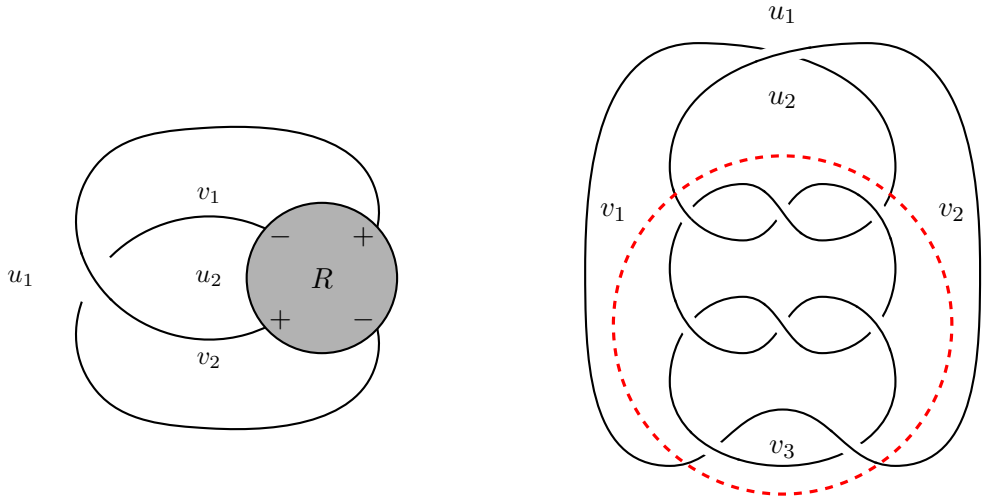
\begin{figure}[h]
$$\begin{tikzpicture}[thick]

\begin{knot}[
	%draft mode = crossings,
	consider self intersections,
 	clip width = 5,
 	ignore endpoint intersections = false,
 ]
 %\flipcrossings{4};
 \strand
 (4,0) to [curve through = {(4.75,1) .. (2.5,2) .. (1,1.5)  .. (2,-.75) .. (3,-.75)}] (4,0);
 \strand 
 (4,0) to [curve through = {(4.75,-1) .. (2.5, -2) .. (1,-1.5) .. (2,.75) .. (3,.75)}] (4,0);

\end{knot}

\fill[white!70!black] (4,0) circle (1cm);
\draw[thick] (4,0) circle (1cm);
\draw (4,0) node{$R$};
\draw (3.45,.55) node{$-$};
\draw (4.55,.55) node{$+$};
\draw (4.55,-.55) node{$-$};
\draw (3.45,-.55)node{$+$};

\draw (2.5,1.1) node{\small{$v_1$}};
\draw (2.5,-1.1) node{\small{$v_2$}};
\draw (2.5,0) node{\small{$u_2$}};
\draw (0,0) node{\small{$u_1$}};

\begin{scope}[xshift =  9 cm, yshift = -2.5cm, scale = .75]

\draw (1.5,8) node {$u_1$};
\draw (1.5,6.5) node {$u_2$};

\draw (-1.5,4.5) node{$v_1$};
\draw (4.5,4.5) node{$v_2$};
\draw (1.5,0.3) node{$v_3$};

\begin{knot}[
 	%draft mode = crossings,
	consider self intersections,
 	clip width = 3,
 	ignore endpoint intersections = true,
	end tolerance = 2pt
	]
	\flipcrossings{5, 7, 9, 1}
	%\flipcrossings{2,3,4,6,8}
	\strand (-2,3.5) to [out = 270, in = 180]
	(-.5,0) to [out = 0, in = 180]
	(1.5,1) to [out = 0, in =180]
	(3.5,0) to [out =0, in=270]
	(5,3.5) to [out = 90, in = 0]
	(3,7.5) to [out = 180, in = 90]
	(-.5,5.3) to [out =270, in = 180]
	(.8,4) to [out = 0, in = 180]
	(2.2,5) to [out = 0, in = 90]
	(3.5,3.5) to [out = 270, in = 0]
	(2.2,2) to [out = 180, in = 0]
	(.8,3) to [out = 180, in = 90]
	(-.5,1.5) to [out = 270, in = 180]
	(1.5,0) to [out =0, in =270]
	(3.5,1.5) to [out = 90, in= 0]
	(2.2,3) to [out =180, in = 0]
	(.8,2) to [out =180, in = 270]
	(-.5,3.5) to [out = 90, in =180]
	(.8,5) to [out = 0, in = 180]
	(2.2,4) to [out = 0, in = 270]
	(3.5,5.3) to [out = 90, in = 0]
	(0,7.5) to [out = 180, in = 90]
	(-2,3.5);
	
	\end{knot}
	
\draw[red, dashed, very thick] (1.5,2.5) ellipse (3cm and 3cm);

\end{scope}

\end{tikzpicture}$$
\caption{On the left is a generic almost alternating diagram. The $2$-tangle $R$ is alternating. A label of $+$ on an incoming strand indicates that strand passes over the first other strand it encounters inside $R$. Similarly, a label of $-$ on an incoming strand indicates that stranded passes under the first other strand it encounters inside $R$. On the right is an $A$-almost alternating diagram of a knot $K$ where the alternating tangle $R$ is inside the dashed red circle. The region $v_3$ shares a crossing with each of $v_1$ and $v_2$, and so the diagram is not $B$-almost alternating. However no region in the alternating tangle $R$ shares a crossing with each of $u_1$ and $u_2$, and thus the diagram is $A$-almost alternating.}
\label{figure:aadiagram}
\end{figure}

A \textit{Kauffman state} of $D$ is the set of simple closed curves resulting from a choice at each crossing $\tikz[baseline=.6ex, scale = .4]{
\draw (0,0) -- (1,1);
\draw (1,0) -- (.7,.3);
\draw (.3,.7) -- (0,1);
}
~$ of an $A$-resolution $~ \tikz[baseline=.6ex, scale = .4]{
\draw[rounded corners = 1mm] (0,0) -- (.45,.5) -- (0,1);
\draw[rounded corners = 1mm] (1,0) -- (.55,.5) -- (1,1);
}~$or a $B$-resolution $~ \tikz[baseline=.6ex, scale = .4]{
\draw[rounded corners = 1mm] (0,0) -- (.5,.45) -- (1,0);
\draw[rounded corners = 1mm] (0,1) -- (.5,.55) -- (1,1);
}~$. The state obtained by choosing an $A$-resolution for every crossing is the \textit{all-$A$ state} of $D$, and the state obtained by choosing a $B$-resolution for every crossing is the \textit{all-$B$ state} of $D$. A link diagram $D$ is \textit{$A$-adequate} (respectively \textit{$B$-adequate}) if no two arcs in the $A$-resolution ($B$-resolution) of any crossing lie on the same component of the all-$A$ (all-$B$) state of $D$. A link is \textit{$A$-adequate} (respectively \textit{$B$-adequate}) if it has an $A$-adequate ($B$-adequate) diagram. A link diagram that is both $A$-adequate and $B$-adequate is called \textit{adequate}, and any link having such a diagram is also called \textit{adequate}. A link that is not adequate, but has a diagram that is either $A$-adequate or $B$-adequate is called \textit{semi-adequate}. A link that has no $A$-adequate or $B$-adequate diagrams is called \textit{inadequate}. Lickorish and Thistlethwaite \cite{LT:Adequate} introduced adequate links and proved that an adequate diagram of a link has the fewest number of crossings of any diagram of that link. 

It is straightforward to check that every almost alternating link has Turaev genus one. Kim \cite{Kim:TuraevClassification} proved that every inadequate Turaev genus one link is almost alternating (see Theorem \ref{theorem:TuraevInadequate}). 
\begin{definition}
A Turaev genus one diagram $D$ is \textit{$A$-Turaev genus one} if it is $A$-adequate or $A$-almost alternating. Likewise, a Turaev genus one diagram $D$ is \textit{$B$-Turaev genus one} if it is $B$-adequate or $B$-almost alternating. A Turaev genus one link $L$ is \textit{$A$-Turaev genus one} if it has an $A$-Turaev genus one diagram and is \textit{$B$-Turaev genus one} if it has a $B$-Turaev genus one diagram. 
\end{definition}
Corollary \ref{corollary:ABgt1} states that every Turaev genus one link is either $A$-Turaev genus one or $B$-Turaev genus one.

\subsection{Extremal Khovanov homology}
The Khovanov homology $Kh(L)$ of a link $L$ is a homological generalization of the Jones polynomial of $L$. It is a $\mathbb{Z}$-module equipped with two gradings: the homological grading $i$ and the quantum grading $j$. There is a direct sum decomposition $Kh(L) = \bigoplus_{i,j\in\mathbb{Z}} Kh^{i,j}(L)$ where $Kh^{i,j}(L)$ denotes the summand in homological grading $i$ and quantum grading $j$. The extremal Khovanov homology of a link refers to the Khovanov homology in the maximum or minimum quantum gradings. Define 
\begin{align*}
j_{\min}(L) = & \; \min\{j~|~Kh^{i,j}(L)\neq 0\},\\
j_{\max}(L) = & \; \max\{j~|~Kh^{i,j}(L) \neq 0\},\\
\delta_{\min}(L) = & \; \min\{2i-j~|~Kh^{i,j}(L)\neq 0\},~\text{and}\\
\delta_{\max}(L) = & \; \max \{2i-j~|~Kh^{i,j}(L)\neq 0\}.
\end{align*}
For a fixed $j_0\in\mathbb{Z}$, let $Kh^{*,j_0}(L)$ denote the direct sum $\bigoplus_{i\in\mathbb{Z}}Kh^{i,j_0}(L)$. As a shorthand, we often write $Kh^{*,j_0}(L) = Kh^{i_0,j_0}(L)$ to mean that the Khovanov homology of $L$ in quantum grading $j_0$ is entirely supported in the single homological grading $i_0$. Our main theorem characterizes at least one of the extremal Khovanov homology groups of Turaev genus one, almost alternating, and $A$- or $B$-adequate links.

\begin{theorem}
\label{theorem:Diagonal}
Suppose that $L$ is a non-split link.
\begin{enumerate}
\item If $L$ is $A$-Turaev genus one, $A$-almost alternating, or $A$-adequate, then there is an $i_0\in\mathbb{Z}$ such that $Kh^{*,j_{\min}(L)}(L) = Kh^{i_0,j_{\min}(L)}(L)\cong \mathbb{Z}$ and $2i_0-j_{\min}(L) = \delta_{\min}(L) + 2$.
\item If $L$ is $B$-Turaev genus one, $B$-almost alternating, or $B$-adequate, then there is an $i_0\in\mathbb{Z}$ such that $Kh^{*,j_{\max}(L)}(L) = Kh^{i_0,j_{\max}(L)}(L) \cong \mathbb{Z}$ and 
$2i_0 - j_{\max}(L) = \delta_{\max}(L) -2$.
\end{enumerate}
\end{theorem}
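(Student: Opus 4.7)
The plan is to prove part (1); part (2) then follows by taking mirror images, which exchanges $A$-resolutions with $B$-resolutions, interchanges $j_{\min}$ with $-j_{\max}$, and sends $\delta_{\min}$ to $-\delta_{\max}$. By definition of $A$-Turaev genus one, any non-split link satisfying one of the three hypotheses in part (1) has a diagram that is either $A$-adequate or $A$-almost alternating, so it suffices to treat these two cases separately.

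For the $A$-adequate case, I would work directly with Khovanov's state-sum description of the chain complex. The bidegree $(-n_-(D),j_{\min}(L))$ is generated uniquely by the enhanced state consisting of the all-$A$ resolution with every circle labeled by the lower-degree generator. Because $D$ is $A$-adequate, no crossing has both of its $A$-arcs on the same circle of the all-$A$ state, so this enhanced state is a cycle, nothing maps into its bidegree under the Khovanov differential, and it generates a $\mathbb{Z}$-summand of $Kh^{*,j_{\min}(L)}(L)$ supported in a single homological grading. Examining the portion of the chain complex in quantum grading $j_{\min}(L)+2$, whose generators come from states differing from the all-$A$ state at one crossing (with a single circle labeled $1$), and reusing $A$-adequacy, identifies at least one surviving class on the diagonal $\delta = \delta_{\min}(L) + 2$ while everything at lower $\delta$-grading cancels; this is a refinement of Khovanov's extremal calculation for adequate links.

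For the $A$-almost alternating case, I would apply the long exact sequence in Khovanov homology obtained by smoothing the dealternator. Because the dealternator is the unique non-alternating crossing, both of its smoothings yield alternating diagrams, whose Khovanov homology is supported on exactly two $\delta$-diagonals and is free, with each extremal quantum grading contributing a single $\mathbb{Z}$-summand sitting on the inner of those two diagonals. Conditions (1) and (2) of Definition \ref{definition:ABalmostalternating} ensure that both alternating smoothings are non-split, so their extremal groups are indeed $\mathbb{Z}$. Restricting the long exact sequence to quantum grading $j_{\min}(L)$ should reduce the problem to analyzing a single connecting homomorphism between two such $\mathbb{Z}$-summands, and condition (3A) should be precisely what forces this map to vanish, leaving exactly one $\mathbb{Z}$-summand in $Kh^{*,j_{\min}(L)}(L)$.

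The hard part will be this last step: identifying the correct summands across the long exact sequence with the right grading shifts, and translating the combinatorial condition (3A) into an algebraic statement about the connecting map. Once this is carried out, the diagonal shift $2i_0 - j_{\min}(L) = \delta_{\min}(L) + 2$ should follow from the fact that the surviving class lies on the upper of the two diagonals supporting the alternating smoothing's Khovanov homology, paralleling the known behavior for genuine alternating links.
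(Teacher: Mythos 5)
Your overall architecture (reduce to the $A$-side by mirroring, then split into the $A$-adequate and $A$-almost alternating cases) matches the paper, and your identification of the extremal $\mathbb{Z}$-summand in the $A$-adequate case is Khovanov's argument (Theorem \ref{theorem:ExtAdeq}). But both of your mechanisms for the harder halves have genuine gaps. For the diagonal grading in the adequate case, you propose to exhibit a surviving class in quantum grading $j_{\min}(L)+2$; this already fails for the left-handed trefoil, which is $A$-adequate with $j_{\min}=-9$ and $Kh^{*,-7}=Kh^{-2,-7}\cong\mathbb{Z}_2$, of rank zero, so there is no free class in that quantum grading at all. The witness on the diagonal $\delta_{\min}(L)$ can live in a quantum grading far from the extremal one, and the paper produces it indirectly: Theorem \ref{theorem:DiagonalGrading} bounds all diagonals from below by $s_A(D)-c_+(D)-2$, and the Lee--Rasmussen pairing (Theorem \ref{theorem:KnightMove}) applied to the extremal generator forces a nontrivial group on the diagonal $s_A(D)-c_+(D)-2$ somewhere (possibly via a knight move of length $4k$ rather than a pawn move), which pins down $\delta_{\min}(L)$.

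For the almost alternating case, resolving the dealternator is exactly the strategy the authors describe in the introduction as unworkable. In the unshifted long exact sequence at the dealternator, the term $\underline{Kh}^{*,j}(D_A)$ with $j=-s_A(D)+2$ is not the extremal group of the alternating diagram $D_A$ but the one two quantum gradings above it (since $s_A(D_A)=s_A(D)$); its rank is governed by the Jones polynomial and signature of $D_A$ and can be arbitrarily large, so the sequence does not reduce to a single connecting homomorphism between two copies of $\mathbb{Z}$, and controlling $\partial$ there would require explicit generators rather than isomorphism types. Moreover, condition (3A) is not used to kill that connecting map: in the paper it guarantees that $A$-resolving a crossing \emph{inside the tangle $R$} yields another $A$-almost alternating diagram (deleting an edge of $G$ cannot increase $\operatorname{adj}(u_1,u_2)$), which is what enables the actual argument, an induction on the crossings of $R$. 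There the $B$-resolution is again almost alternating, so Theorem \ref{theorem:lowq} gives $j_{\min}(D_B)\geq -s_A(D_B)+2$, Lemma \ref{lemma:LESjmin} transports the extremal $\mathbb{Z}$ up the induction, and the base case is handled by an explicit isotopy to an alternating diagram. You would need either to adopt this induction or to supply the generator-level analysis your sketch defers.
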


Despite the fact that every almost alternating link has Turaev genus one, we chose to include almost alternating links in the statement of the theorem due to their prominence in the proof. Theorem \ref{theorem:Diagonal} is proven separately for $A$- or $B$-adequate and almost alternating links. Then Kim's result \cite{Kim:TuraevClassification} that says an inadequate Turaev genus one link is almost alternating implies the result for Turaev genus one links. Khovanov \cite{Kh:Patterns} proved that the extremal Khovanov homology of an $A$- or $B$-adequate link is isomorphic to $\mathbb{Z}$. Our contribution in Theorem \ref{theorem:Diagonal} for an $A$- or $B$-adequate link is to specify the $\delta$-grading of the extremal summand.  

The proof of Theorem \ref{theorem:Diagonal} for almost alternating links is quite different than the proof of its Jones polynomial analogue in \cite{DasLow:TuraevJones}. Resolving the dealternator in an almost alternating diagram $D$ as either an $A$-resolution or a $B$-resolution results in alternating diagrams $D_A$ and $D_B$ respectively. The proof in \cite{DasLow:TuraevJones} combines formulas for the first few coefficients of the Jones polynomial of the  $D_A$ and $D_B$ to prove that either the first or last coefficient of the Jones polynomial of $D$ has absolute value one. We initially hoped to use a similar strategy to prove Theorem \ref{theorem:Diagonal}. Since $D_A$ and $D_B$ are alternating links, their Khovanov homology groups $Kh(D_A)$ and $Kh(D_B)$ are well-understood, and the long exact sequence (see Theorem \ref{theorem:LES}) relates $Kh(D_A)$, $Kh(D_B)$, and $Kh(D)$. However, in order to compute $Kh(D)$ in its extremal quantum gradings, one would need not only to know the Khovanov homology of $D_A$ and $D_B$, but also know explicit generators for each homology class in the first and last two quantum gradings of $Kh(D_A)$ and $Kh(D_B)$. Instead we take an inductive approach by resolving crossings in the alternating tangle $R$, resulting in almost alternating diagrams with fewer crossings than $D$. See Theorem \ref{theorem:AAKh} for the details of this result.

Theorem \ref{theorem:Diagonal} has the following corollary.
\begin{corollary}
\label{corollary:Main}
If $L$ is a non-split Turaev genus one link, then its Khovanov homology is isomorphic to $\mathbb{Z}$ in at least one of its extremal quantum gradings.
\end{corollary}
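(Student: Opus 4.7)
The plan is that this corollary follows essentially for free from Theorem \ref{theorem:Diagonal} once we invoke the dichotomy announced earlier in the introduction. Specifically, Corollary \ref{corollary:ABgt1} tells us that every Turaev genus one link $L$ is either $A$-Turaev genus one or $B$-Turaev genus one (the possibilities are not mutually exclusive, but at least one holds). Assuming $L$ is non-split, we simply case split on which of the two holds.

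In the first case, when $L$ is $A$-Turaev genus one, part (1) of Theorem \ref{theorem:Diagonal} applies and yields an integer $i_0$ with $Kh^{*,j_{\min}(L)}(L) = Kh^{i_0,j_{\min}(L)}(L) \cong \mathbb{Z}$, so the Khovanov homology of $L$ in its minimum quantum grading is isomorphic to $\mathbb{Z}$. In the second case, when $L$ is $B$-Turaev genus one, part (2) of Theorem \ref{theorem:Diagonal} applies and similarly gives $Kh^{*,j_{\max}(L)}(L) \cong \mathbb{Z}$, so the Khovanov homology of $L$ in its maximum quantum grading is isomorphic to $\mathbb{Z}$. Either way at least one of the two extremal quantum gradings carries Khovanov homology isomorphic to $\mathbb{Z}$, which is exactly the conclusion of the corollary.

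There is no real obstacle here; the whole weight of the argument has been absorbed into Theorem \ref{theorem:Diagonal} (the inductive reduction on crossings of the alternating tangle $R$ for almost alternating diagrams, together with Khovanov's result for $A$- and $B$-adequate links) and into Corollary \ref{corollary:ABgt1} (which in turn relies on Theorem \ref{theorem:ABaa} that every almost alternating link is $A$- or $B$-almost alternating, combined with Kim's theorem that inadequate Turaev genus one links are almost alternating). One might remark in passing that the corollary as stated is slightly weaker than what is actually proven, since Theorem \ref{theorem:Diagonal} further identifies a single homological grading where the extremal $\mathbb{Z}$ sits and pins down its $\delta$-grading, but for the corollary we discard this refinement.
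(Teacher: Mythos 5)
Your proof is correct and follows exactly the same route as the paper: invoke Corollary \ref{corollary:ABgt1} to conclude that $L$ is $A$-Turaev genus one or $B$-Turaev genus one, then apply the corresponding part of Theorem \ref{theorem:Diagonal} to obtain a $\mathbb{Z}$ in the minimal or maximal quantum grading. No issues.
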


The study of extremal or near extremal Jones coefficients and Khovanov groups has received considerable attention. Bae and Morton \cite{BaeMorton:Jones} developed an algorithm for computing the extremal Jones coefficients from the all-$A$ and all-$B$ states. Gonz\'{a}lez-Meneses and Manch\'{o}n \cite{GMM:ExtremalJonesCoefficients} used Bae and Morton's work to prove that there are prime links with arbitrary extremal Jones coefficients. Gonz\'{a}lez-Meneses, Manch\'{o}n, and Silvero \cite{GMMS:ExtremalKh} gave a geometric description of the extremal Khovanov homology in terms of the all-$A$ and all-$B$ states. Przytycki and Silvero \cite{PS:Homotopy, PS:AlmostExtreme} and Mor\'an and Silvero \cite{MS:Spectra} further study extremal or near extremal Khovanov groups from various different perspectives.

Often, a result involving a knot polynomial can be strengthened in the homological setting. For example, Lickorish and Thistlethwaite \cite{LT:Adequate} proved that the first and last coefficients of the Jones polynomial of an adequate link have absolute value one, while Khovanov \cite{Kh:Patterns} proved that such links have extremal Khovanov homology isomorphic to $\mathbb{Z}$. The span of the Jones polynomial gives a lower bound on the crossing number of a link \cite{Kauffman:StateModels, Murasugi:Jones, Thistlethwaite:Jones}, and using essentially the same proof and basic facts from the construction of Khovanov homology \cite{Kh:Jones}, one can show that the span of the quantum grading of the Khovanov homology of a link gives a sometimes better lower bound on the crossing number. For example, the Jones polynomial lower bound for the crossing number of the knot $10_{132}$ implies that its crossing number is at least 5. However since the Khovanov homology of $10_{132}$ in its maximal quantum grading is isomorphic to $\mathbb{Z}\oplus\mathbb{Z}$ and the two $\mathbb{Z}$-summands cancel in the Euler characteristic, the Khovanov homology lower bound for the crossing number of the knot $10_{132}$ implies that its crossing number is at least 6. Theorem \ref{theorem:Diagonal} is another example of such a result. In addition to knowing that at least one extremal Khovanov homology group of a Turaev genus one link is isomorphic to $\mathbb{Z}$, Theorem \ref{theorem:Diagonal} also gives us information about in which diagonal grading that $\mathbb{Z}$-summand is supported. Example \ref{example:10_132} gives a knot whose Jones polynomial has leading and trailing coefficient of absolute value one, but both of whose extremal Khovanov homology groups have rank two. Examples \ref{example:11n376} and \ref{example:13crossings} give knots and links that have one extremal Khovanov group isomorphic to $\mathbb{Z}$, but that summand is supported in a different diagonal grading than the one specified in Theorem \ref{theorem:Diagonal}. Therefore, each of these examples is inadequate, is not almost alternating, and has Turaev genus at least two.

Theorem \ref{theorem:Diagonal} suggests potential relationships between $A$-/$B$-adequate links, almost alternating links, and Turaev genus one links. Any link $L$ that is almost alternating, Turaev genus one, or $A$- or $B$-adequate satisfies the following.
\begin{enumerate}
\item The leading or trailing coefficient of the Jones polynomial of $L$ has absolute value one \cite{LT:Adequate, DasLow:TuraevJones}.
\item Either the first two or last two coefficients of the Jones polynomial of $L$ alternate in sign \cite{Stoimenow:Semi, LS:AAJones}.
\item The Jones polynomial of $L$ is equal to the Jones polynomial of a trivial $m$-component link if and only if $L$ is the trivial $m$-component link for $m\geq 1$ \cite{Stoimenow:Semi, LS:AAJones}.
\end{enumerate}
The similarities above make it difficult to use invariants to distinguish between these three classes of links. One can show that every almost alternating link is Turaev genus one, and that there are $A$-adequate (and $B$-adequate) links that are neither Turaev genus one nor almost alternating (e.g. the $(3,7)$-torus knot). We ask the following open questions about almost alternating, Turaev genus one, and $A$-/$B$-adequate links. The authors suspect an affirmative answer to both questions.

\begin{question}
 Does there exist a link of Turaev genus one that is not almost alternating?
 \end{question}
 \begin{question}
Does there exist an almost alternating inadequate link?
\end{question}

Odd Khovanov homology is a categorification of the Jones polynomial introduced by Ozsv\'ath, Rasmussen, and Szab\'o \cite{ORS:OddKh}. It coincides with Khovanov homology with mod 2 coefficients, but is, in general, different. Theorem \ref{theorem:OddDiagonal} is a version of Theorem \ref{theorem:Diagonal} for odd Khovanov homology.

\subsection{Applications: signature and maximal Thurston Bennequin number}
Let $\sigma(L)$ be the signature of the link $L$, with the convention that the signature of positive trefoil is $-2$. We prove that a version of Theorem \ref{theorem:Diagonal} involving signature holds for links that are both $A$-Turaev genus one and $B$-Turaev genus one. In Section \ref{section:Signature}, we conjecture a strengthening of this result.

\begin{theorem}
\label{theorem:Signature}
Suppose $L$ is a non-split link that is both $A$-Turaev genus one and $B$-Turaev genus one. At least one of the following statements hold.
\begin{enumerate}
\item There is an $i_0\in\mathbb{Z}$ such that $Kh^{*,j_{\min}(L)}(L) = Kh^{i_0(L),j_{\min}(L)}(L)\cong \mathbb{Z}$ and $2i_0(L)-j_{\min}(L)  = \sigma(L)+1$.
\item There is an $i_0\in\mathbb{Z}$ such that $Kh^{*,j_{\max}(L)}(L) = Kh^{i_0(L),j_{\max}(L)}(L) \cong \mathbb{Z}$ and 
$2i_0(L) - j_{\max}(L)  = \sigma(L)-1$.
\end{enumerate}
\end{theorem}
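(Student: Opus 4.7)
The plan is to apply Theorem~\ref{theorem:Diagonal} simultaneously to both the $A$- and $B$-sides and then reduce the remaining signature identity to a sandwiching bound plus a parity argument. Specifically, Theorem~\ref{theorem:Diagonal}(1) gives a single $\mathbb{Z}$-summand $Kh^{i_0^A, j_{\min}(L)}(L)$ on the $\delta$-diagonal $\delta_{\min}(L)+2$, and Theorem~\ref{theorem:Diagonal}(2) gives a single $\mathbb{Z}$-summand $Kh^{i_0^B, j_{\max}(L)}(L)$ on the $\delta$-diagonal $\delta_{\max}(L)-2$. Conclusions (1) and (2) of the theorem then become the signature equalities $\sigma(L)=\delta_{\min}(L)+1$ and $\sigma(L)=\delta_{\max}(L)-1$ respectively, so it suffices to prove at least one of them.

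Next I would invoke the width bound for Turaev genus one links, which says that $Kh(L)$ is supported on at most three consecutive $\delta$-diagonals, so $\delta_{\max}(L)-\delta_{\min}(L)\in\{2,4\}$ for a non-split $L$. Provided one has the signature sandwich
\[
\delta_{\min}(L)+1 \;\le\; \sigma(L) \;\le\; \delta_{\max}(L)-1,
\]
the width-$2$ case forces $\sigma(L)=\delta_{\min}(L)+1=\delta_{\max}(L)-1$, so that both conclusions hold. In the width-$3$ case the sandwich leaves $\sigma(L)\in\{\delta_{\min}(L)+1,\delta_{\min}(L)+2,\delta_{\min}(L)+3\}$, but the middle value is excluded by parity: for a non-split $n$-component link, $\sigma(L)\equiv n-1\pmod{2}$ while every $\delta$-grading of $Kh(L)$ satisfies $\delta\equiv n\pmod{2}$, so $\sigma(L)\pm 1$ shares the parity of the $\delta$-diagonals and only $\delta_{\min}(L)+1$ or $\delta_{\max}(L)-1$ survive. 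Either value gives exactly one of conclusions (1) or (2).

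The main obstacle, then, is establishing the signature sandwich. This is false for general non-split links (e.g.\ the Conway knot has $\sigma=0$ but Khovanov support well away from the diagonals $\pm 1$), so the joint $A$- and $B$-Turaev genus one hypothesis must enter in an essential way. My plan of attack is through Lee's canonical generators applied to the given orientation of $L$: two specific linear combinations of Lee generators survive in $Kh(L)\otimes\mathbb{Q}$ at quantum gradings determined by linking-number data, and under the joint hypothesis these surviving classes have no available ``interior'' $\mathbb{Z}$-summand to land in at the extremal quantum gradings --- the uniqueness statement in Theorem~\ref{theorem:Diagonal} on both sides forces them into the two extremal $\mathbb{Z}$-summands simultaneously. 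Matching bigradings then pins the Lee quantum gradings to $-\sigma(L)\pm 1$ and delivers the sandwich, with equality on one side. When only one of the $A$- or $B$-hypotheses holds, a Lee class can slip into the interior of $Kh(L)$, and this is precisely why the fully general Turaev genus one statement remains a conjecture in Section~\ref{section:Signature}.
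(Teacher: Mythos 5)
Your reduction is sound as far as it goes: by Theorem \ref{theorem:Diagonal} both extremal groups are $\mathbb{Z}$-summands sitting on the diagonals $\delta_{\min}(L)+2$ and $\delta_{\max}(L)-2$, so the theorem is equivalent to the assertion that $\sigma(L)=\delta_{\min}(L)+1$ or $\sigma(L)=\delta_{\max}(L)-1$. But the step you yourself flag as the main obstacle --- the sandwich $\delta_{\min}(L)+1\le\sigma(L)\le\delta_{\max}(L)-1$ --- is exactly the content of the theorem, and the Lee-theoretic mechanism you propose for it does not work. The two canonical Lee classes for a knot survive in homological grading $0$ and in quantum (filtration) gradings $s(K)\pm 1$; these are generically \emph{interior} quantum gradings, not $j_{\min}$ and $j_{\max}$ (for $T_{3,4}$, which is almost alternating, $s=6$ puts the Lee classes at $q=5,7$ while $j_{\max}=17$), and the extremal summands produced by Theorem \ref{theorem:Diagonal} sit in homological gradings $1-c_-(D)$ and $c_+(D)-1$, which are typically nonzero. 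So there is no "forcing" of the Lee classes into the extremal summands, and even if there were, their gradings are controlled by the Rasmussen invariant $s$, not by $\sigma$; converting $s$ to $-\sigma$ is false in general and, for Turaev genus one links, is essentially equivalent to the inequality you are trying to prove (cf.\ Equations \ref{equation:concordance1} and \ref{equation:concordance2}).

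The missing ingredient is diagrammatic, not homological: for any Turaev genus one diagram $D$ of $L$ one has $\sigma(L)=s_A(D)-c_+(D)\pm 1$ (Equation \ref{equation:AASig}, a Gordon--Litherland/Goeritz-type computation from \cite{DasLow:TuraevJones}, refining Traczyk's formula $\sigma=s_A-c_+-1$ for reduced alternating diagrams). The proofs of Theorem \ref{theorem:Semi} and Corollary \ref{corollary:Diagonal} show that \emph{both} extremal $\mathbb{Z}$-summands lie on the single diagonal $2i-j=s_A(D)-c_+(D)$ (using $c_-(D)-s_B(D)=s_A(D)-c_+(D)$ for a genus one diagram), whence the $\pm$ in the signature formula decides which of conclusions (1) or (2) holds. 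Note in particular that $\delta_{\min}+2=\delta_{\max}-2$ for these links, so your width-$2$ case never occurs, and the parity discussion is unnecessary once the diagrammatic formula is in hand. Without citing or reproving that formula, your argument is incomplete.
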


The Thurston Bennequin number $\tb(\mathcal{L})$ of an oriented Legendrian link $\mathcal{L}$ measures the framing of the contact plane field around $\mathcal{L}$. Among all Legendrian links with a given topological link type $L$, the Thurston Bennequin number $\tb(\mathcal{L})$ is bounded from above. For a topological link $L$, its \textit{maximal Thurston Bennequin number} $\mtb(L)$ is defined to be the maximum of $\tb(\mathcal{L})$ over all Legendrian links $\mathcal{L}$ whose topological link type is $L$. Ng \cite{Ng:TBKh} proved that the Khovanov homology gives an upper bound on the maximal Thurston Bennequin number of a link and used that bound to compute $\mtb(L)$ for any non-split alternating link $L$. We generalize Ng's approach to Turaev genus one links. 

\begin{theorem}
\label{theorem:TBAA}
Let $L$ be a Turaev genus one link with diagram $D$. Let $w(D)$ be the writhe of $D$, and let $s_A(D)$ and $s_B(D)$ be the number of components in the all-$A$ and all-$B$ Kauffman states of $D$ respectively. 
\begin{itemize}
\item If $D$ is $A$-Turaev genus one, then 
$$w(D) - s_A(D)  \leq  \mtb(L)  \leq  w(D) - s_A(D) + 1.$$
\item If $D$ is $B$-Turaev genus one and $\overline{L}$ is the mirror of $L$, then
$$-w(D) -s_B(D) \leq  \mtb(\overline{L})  \leq  -w(D) - s_B(D) + 1.$$
\end{itemize}
\end{theorem}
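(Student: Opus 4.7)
The plan is to adapt Ng's Khovanov-homology approach to $\mtb$ from the alternating case \cite{Ng:TBKh} to the Turaev genus one setting. Statement (2) follows from statement (1) applied to the mirror link $\overline{L}$ with its mirror diagram $\overline{D}$, since mirroring exchanges $A$- and $B$-resolutions and reverses the writhe: one has $w(\overline{D}) = -w(D)$ and $s_A(\overline{D}) = s_B(D)$, so a $B$-Turaev genus one diagram of $L$ becomes an $A$-Turaev genus one diagram of $\overline{L}$. I focus on statement (1).

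The lower bound is diagrammatic. Given $D$, one builds a Legendrian front by replacing each crossing with its $A$-resolution together with a pair of cusps that recover the crossing topologically. This produces a Legendrian representative $\mathcal{L}$ of $L$ with $\tb(\mathcal{L}) = w(D) - s_A(D)$, so $\mtb(L) \geq w(D) - s_A(D)$. This construction is the same one Ng uses in the alternating case and works whenever $D$ is a diagram of $L$.

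The upper bound comes from Ng's Khovanov inequality, which bounds $\mtb(L)$ above in terms of the extremal non-trivial bigradings of $Kh(L)$. Theorem~\ref{theorem:Diagonal}(1) tells us that $Kh^{*, j_{\min}(L)}(L) = Kh^{i_0, j_{\min}(L)}(L) \cong \mathbb{Z}$ is concentrated at a unique bigrading whose $\delta$-grading is $\delta_{\min}(L) + 2$. A direct computation on the Khovanov chain complex of $D$ pins down $(i_0, j_{\min}(L))$ in terms of $w(D)$, $n_-(D)$, and $s_A(D)$: for an $A$-adequate diagram, the all-$A$ Kauffman state with every circle labeled by a minus is a chain-level generator at homological grading $-n_-(D)$ and quantum grading $w(D) - n_-(D) - s_A(D)$ that survives to homology, while for an $A$-almost alternating diagram the dealternator forces the analogous chain-level generator to be a boundary and shifts $(i_0, j_{\min}(L))$ in a controlled way. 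Feeding this into Ng's inequality yields $\mtb(L) \leq w(D) - s_A(D) + 1$, with equality between the upper and lower bounds in the $A$-adequate case.

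The main obstacle will be the chain-level calculation for $A$-almost alternating diagrams, which are inadequate so that the all-$A$ state does not automatically detect $j_{\min}(L)$. One must identify the surviving extremal $\mathbb{Z}$-generator guaranteed by Theorem~\ref{theorem:Diagonal} with an explicit element of the Khovanov complex, read off its $(i,j)$-bigrading, and check that Ng's inequality then produces exactly $w(D) - s_A(D) + 1$ rather than something larger. The location of this generator should parallel the inductive argument over crossings in the alternating tangle $R$ that underlies the proof of Theorem~\ref{theorem:Diagonal} for almost alternating links.
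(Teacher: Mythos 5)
Your upper bound is essentially the paper's argument: Corollary \ref{corollary:Diagonal} (via Theorem \ref{theorem:AAKh}) already locates the extremal $\mathbb{Z}$-summand at $i_0(L) = 1 - c_-(D)$ and $j_{\min}(L) = 2 - s_A(D) + c_+(D) - 2c_-(D)$, so Ng's inequality gives $\mtb(L) \le j_{\min}(L) - i_0(L) = w(D) - s_A(D) + 1$ immediately; no new chain-level calculation is needed, and the $A$-adequate case is exactly K\'alm\'an's theorem. The reduction of statement (2) to statement (1) by mirroring is also fine.

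The gap is in the lower bound. The construction you describe --- ``replacing each crossing with its $A$-resolution together with a pair of cusps'' --- is not Ng's construction (it would introduce two extra cusps per crossing and give a far smaller $\tb$), and the claim that a front realizing $\tb = w(D) - s_A(D)$ exists ``whenever $D$ is a diagram of $L$'' is unjustified and is precisely where all the work lies. Ng's construction passes through a Mondrian diagram of the checkerboard graph: each horizontal segment (one per shaded region, hence $s_A(D)$ right cusps in total) becomes a two-cusped unknot and each vertical segment becomes a front crossing with no additional cusps. But a front crossing always has the more negatively sloped strand on top, so this construction natively produces a front of the \emph{alternating} diagram with that checkerboard graph, i.e.\ of $D_{\alt}$, not of the almost alternating diagram $D$. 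Realizing $D$ itself requires changing the dealternator crossing, which destroys the front condition there. The paper handles this with Lemma \ref{lemma:Mondrian} (arranging the dealternator's vertical segment to sit in an extremal position on both of its horizontal segments) followed by the local isotopy of Figure \ref{figure:MondrianAA}, which repairs the bad crossing at the cost of one right cusp, so that the resulting front has $\tb = w(D) - (s_A(D_{\alt}) - 1) = w(D) - s_A(D)$. Your proposal contains no substitute for this step; without it the lower bound is not established, and indeed if the construction worked for arbitrary diagrams the $A$-Turaev genus one hypothesis would play no role in the lower bound at all.
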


This paper is organized as follows. In Section \ref{section:KhBackground}, we review the construction of Khovanov homology. In Section \ref{section:TGAA}, we give background information on Turaev genus one and almost alternating links. We also prove that every almost alternating link is $A$-almost alternating or $B$-almost alternating and thus that every Turaev genus one link is $A$-Turaev genus one or $B$-Turaev genus one. In Section \ref{section:Extremal}, we prove Theorem \ref{theorem:Diagonal}. In Section \ref{section:Examples}, we give examples of knots and links such that Theorem \ref{theorem:Diagonal} implies they are inadequate, not Turaev genus one, and not almost alternating. In Section \ref{section:Signature}, we prove Theorem \ref{theorem:Signature} and discuss progress on a related conjecture. Finally, in Section \ref{section:TB}, we recall some facts about Thurston Bennequin numbers and prove Theorem \ref{theorem:TBAA}.

\section{Khovanov homology background}
\label{section:KhBackground}

In this section, we briefly review the construction of Khovanov homology and a few Khovanov homology results relevant to the current paper. For a more detailed account, the reader can refer to one of \cite{Kh:Jones, BN:Khovanov, Viro:Khovanov, Kauffman:Khovanov, Turner:Khovanov}. An expert reader may wish to skim the remainder of this section.

The version of the Jones polynomial that Khovanov homology categorifies is defined by the following rules:
\begin{enumerate}
\item $\left\langle~
\tikz[baseline=.6ex, scale = .4]{
\draw (0,0) -- (1,1);
\draw (1,0) -- (.7,.3);
\draw (.3,.7) -- (0,1);
}
~\right\rangle = \left\langle ~ \tikz[baseline=.6ex, scale = .4]{
\draw[rounded corners = 1mm] (0,0) -- (.45,.5) -- (0,1);
\draw[rounded corners = 1mm] (1,0) -- (.55,.5) -- (1,1);
}~\right\rangle - q \left\langle~ \tikz[baseline=.6ex, scale = .4]{
\draw[rounded corners = 1mm] (0,0) -- (.5,.45) -- (1,0);
\draw[rounded corners = 1mm] (0,1) -- (.5,.55) -- (1,1);
}~\right\rangle,$
\item $\left\langle~D\sqcup \bigcirc ~\right\rangle = (q+q^{-1})\left\langle D \right\rangle,$
\item $\left\langle ~ \bigcirc ~\right\rangle = q+q^{-1},$
\item $V_L(q) = (-1)^{c_-(D)} q^{c_+(D) - 2c_-(D)} \langle D \rangle$,
\end{enumerate}
where $c_+(D)$ is the number of positive $\left(~\tikz[baseline=.6ex, scale = .4]{
\draw[->] (0,0) -- (1,1);
\draw (1,0) -- (.7,.3);
\draw[->] (.3,.7) -- (0,1);
}~\right)$  crossings in $D$
and $c_-(D)$ is the number of negative $\left(~\tikz[baseline=.6ex, scale = .4]{
\draw[->] (.7,.7) -- (1,1);
\draw[->] (1,0) -- (0,1);
\draw (0,0) -- (.3,.3);
}~\right)$ crossings in $D$. If we let $\widetilde{V}_L(t)$ be the version of the Jones polynomial in \cite{Kauffman:StateModels}, then $V_L(q) = (q+q^{-1})~\widetilde{V}_L(q^2).$ See \cite{Kauffman:Khovanov} for more discussion on the relationship between these two normalizations of the Jones polynomial.

Let $\mathcal{S}(D)$ be the set of Kauffman states of $D$, and let $s \in \mathcal{S}(D)$ be a Kauffman state. Define $a(s)$ and $b(s)$ to be the number of $A$-resolutions and the number of $B$-resolutions respectively in $s$, and define $|s|$ to be the number of components in $s$. The Kauffman bracket of $D$ can be represented by the state sum formula
$$\langle D \rangle = \sum_{s \in \mathcal{S}(D)} (-q)^{b(s)}(q+q^{-1})^{|s|}.$$

An \textit{enhanced state} $S$ of $D$ is a Kauffman state $s$ where each component is labeled either $1$ or $x$. Let $\mathcal{S}_{\operatorname{en}}(D)$ be the set of enhanced states of $D$. Define $a(S)$, $b(S)$, and $|S|$ to be $a(s)$, $b(s)$, and $|s|$ respectively where $s$ is the underlying Kauffman state of the enhanced state $S$. Furthermore, define $\theta(S)$ to be the difference between the number of $1$ labels and the number of $x$ labels on the enhanced state $S$. Then the Kauffman bracket of $D$ can be represented as the sum of monomials
$$\langle D \rangle = \sum_{S\in \mathcal{S}_{\operatorname{en}}(D)} (-1)^{b(S)} q^{b(S) + \theta(S)}.$$

In order to construct Khovanov homology, we first construct a categorification $\underline{Kh}(D)$ of $\langle D \rangle$ (which is not a link invariant). Then we shift gradings to obtain the link invariant known as Khovanov homology and equivalently denoted as $Kh(L)$ or $Kh(D)$. 

The homological grading $i(S)$ of the enhanced state $S$ is defined to be $i(S)=b(S)$, and the quantum grading $j(S)$ of $S$ is defined to be $j(S)=b(S) + \theta(S)$. Let $R$ be a commutative ring with identity, and let $CKh^{i,j}(D;R)$ be the free $R$-module with basis the enhanced states $S$ with homological grading $i$ and quantum grading $j$. Define $CKh(D;R) = \bigoplus_{i,j \in \mathbb{Z}} CKh^{i,j}(D;R).$ The ring $R$ will most frequently be the integers $\mathbb{Z}$, and in that case, we drop $R$ from the notation. For the sake of notational brevity, we describe the construction of Khovanov homology over $\mathbb{Z}$, but $\mathbb{Z}$ could be replaced with $R$ throughout to give $Kh(D;R)$ and $\underline{Kh}(D;R)$, Khovanov homology and unshifted Khovanov homology with coefficients in the ring $R$.

The differential in this complex is a map $d^{i,j}:CKh^{i,j}(D) \to CKh^{i+1,j}(D)$, i.e. it is a $\mathbb{Z}$-module map that increases the homological grading by one and preserves the quantum grading. The map $d^i$ is defined on enhanced states and then extended linearly. 
For any two enhanced states $S_0$ and $S_1$ with underlying Kauffman states $s_0$ and $s_1$ respectively, we define the \textit{incidence number} $\delta(S_0,S_1)$ as follows. Unless $s_1$ can be obtained from $s_0$ by changing a single resolution from an $A$-resolution to a $B$-resolution, then we define $\delta(S_0,S_1)=0$. Suppose that $s_1$ can be obtained from $s_0$ by changing a single $A$-resolution to a $B$-resolution. Then $s_1$ can be obtained from $s_0$ by either merging two components into one or by splitting one component into two. Call all other components of $s_0$ and $s_1$ constant components. Unless all labels on the constant components of $s_0$ are the same as they are in $s_1$, then $\delta(S_0,S_1)=0$. The cases where $\delta(S_0,S_1)=1$ are illustrated in Figure \ref{figure:Incidence} and described in the next two paragraphs. 

Suppose that $s_1$ can be obtained from $s_0$ by merging two components into one, and that all constant components are labeled the same in $S_0$ and $S_1$. If the two non-constant components in $S_0$ are both labeled $1$ and the non-constant component in $S_1$ is labeled $1$, then $\delta(S_0,S_1)=1$. If one of the non-constant components in $S_0$ is labeled $1$ and the other is labeled $x$ and the non-constant component in $S_1$ is labeled $x$, then $\delta(S_0,S_1)=1$. If both of the non-constant components in $S_0$ are labeled $x$, then $\delta(S_0,S_1)=0$.

Suppose that $s_1$ can be obtained from $s_0$ by splitting one component into two, and that all constant components are labeled the same in $S_0$ and $S_1$. If the non-constant component in $S_0$ is labeled $1$, one of the non-constant components in $S_1$ is labeled $1$, and the other non-constant component in $S_1$ is labeled $x$, then $\delta(S_0,S_1)=1$. If the non-constant component in $S_0$ is labeled $x$ and both non-constant components in $S_1$ are labeled $x$, then $\delta(S_0,S_1)=1$.

\begin{figure}[h]
$$\begin{tikzpicture}[thick]

\draw (1.5,2.5) node{$S_0$};
\draw (5.5,2.5) node{$S_1$};
\draw (9.5,2.5) node{$S_0$};
\draw (13,2.5) node{$S_1$};

%--------------------------------%
% 1 \otimes 1 \mapsto 1

\draw[white!50!black] (1,0) -- (2,1);
\draw[white!50!black] (2,0) -- (1.7,.3);
\draw[white!50!black] (1,1) -- (1.3,.7);

\draw[rounded corners = 3mm] (.5,1) -- (.8,1) -- (1.3,.5) -- (.8,0) -- (.5,0);
\draw[rounded corners = 3mm] (2.5,1) -- (2.2,1) -- (1.7,.5) -- (2.2,0) -- (2.5,0);
\draw[dashed, thick] (.5,1) arc (90:270:.4cm and .5cm);
\draw[dashed, thick] (2.5,1) arc (90:-90:.4cm and .5cm);
\draw(1,.5) node{\small{$1$}};
\draw(2,.5) node{\small{$1$}};

\begin{scope}[xshift = 4cm]
\draw[white!50!black] (1,0) -- (2,1);
\draw[white!50!black] (2,0) -- (1.7,.3);
\draw[white!50!black] (1,1) -- (1.3,.7);

\draw[rounded corners = 2.4mm] (.8,1.2) -- (1,1.2) -- (1.5,.7) -- (2,1.2) -- (2.2,1.2);
\draw[rounded corners = 2.4mm] (.8,-.2) -- (1,-.2) -- (1.5,.3) -- (2,-.2) -- (2.2,-.2);
\draw[dashed, thick] (.8,1.2) arc (90:270:.5cm and .7cm);
\draw[dashed, thick] (2.2,1.2) arc (90:-90:.5cm and .7cm); 
\draw (1.5,0) node{\small{$1$}};
\end{scope}

\draw[ultra thick, ->] (3.2,.5) -- (4,.5);

%--------------------------------%
% 1 \otimes x \mapsto x
\begin{scope}[yshift = -3cm]

\draw[white!50!black] (1,0) -- (2,1);
\draw[white!50!black] (2,0) -- (1.7,.3);
\draw[white!50!black] (1,1) -- (1.3,.7);

\draw[rounded corners = 3mm] (.5,1) -- (.8,1) -- (1.3,.5) -- (.8,0) -- (.5,0);
\draw[rounded corners = 3mm] (2.5,1) -- (2.2,1) -- (1.7,.5) -- (2.2,0) -- (2.5,0);
\draw[dashed, thick] (.5,1) arc (90:270:.4cm and .5cm);
\draw[dashed, thick] (2.5,1) arc (90:-90:.4cm and .5cm);
\draw(1,.5) node{\small{$1$}};
\draw(2,.5) node{\small{$x$}};

\begin{scope}[xshift = 4cm]
\draw[white!50!black] (1,0) -- (2,1);
\draw[white!50!black] (2,0) -- (1.7,.3);
\draw[white!50!black] (1,1) -- (1.3,.7);

\draw[rounded corners = 2.4mm] (.8,1.2) -- (1,1.2) -- (1.5,.7) -- (2,1.2) -- (2.2,1.2);
\draw[rounded corners = 2.4mm] (.8,-.2) -- (1,-.2) -- (1.5,.3) -- (2,-.2) -- (2.2,-.2);
\draw[dashed, thick] (.8,1.2) arc (90:270:.5cm and .7cm);
\draw[dashed, thick] (2.2,1.2) arc (90:-90:.5cm and .7cm); 
\draw (1.5,0) node{\small{$x$}};
\end{scope}

\draw[ultra thick, ->] (3.2,.5) -- (4,.5);
\end{scope}

%--------------------------------%
% x \otimes 1 \mapsto x
\begin{scope}[yshift = -6cm]

\draw[white!50!black] (1,0) -- (2,1);
\draw[white!50!black] (2,0) -- (1.7,.3);
\draw[white!50!black] (1,1) -- (1.3,.7);

\draw[rounded corners = 3mm] (.5,1) -- (.8,1) -- (1.3,.5) -- (.8,0) -- (.5,0);
\draw[rounded corners = 3mm] (2.5,1) -- (2.2,1) -- (1.7,.5) -- (2.2,0) -- (2.5,0);
\draw[dashed, thick] (.5,1) arc (90:270:.4cm and .5cm);
\draw[dashed, thick] (2.5,1) arc (90:-90:.4cm and .5cm);
\draw(1,.5) node{\small{$x$}};
\draw(2,.5) node{\small{$1$}};

\begin{scope}[xshift = 4cm]
\draw[white!50!black] (1,0) -- (2,1);
\draw[white!50!black] (2,0) -- (1.7,.3);
\draw[white!50!black] (1,1) -- (1.3,.7);

\draw[rounded corners = 2.4mm] (.8,1.2) -- (1,1.2) -- (1.5,.7) -- (2,1.2) -- (2.2,1.2);
\draw[rounded corners = 2.4mm] (.8,-.2) -- (1,-.2) -- (1.5,.3) -- (2,-.2) -- (2.2,-.2);
\draw[dashed, thick] (.8,1.2) arc (90:270:.5cm and .7cm);
\draw[dashed, thick] (2.2,1.2) arc (90:-90:.5cm and .7cm); 
\draw (1.5,0) node{\small{$x$}};
\end{scope}

\draw[ultra thick, ->] (3.2,.5) -- (4,.5);
\end{scope}

%--------------------------------%
% 1 \mapsto 1 \otimes x

\begin{scope}[xshift = 8cm]

\draw[white!50!black] (1,0) -- (2,1);
\draw[white!50!black] (2,0) -- (1.7,.3);
\draw[white!50!black] (1,1) -- (1.3,.7);

\draw[rounded corners = 2.5mm] (.8,1.2) -- (.8,1) -- (1.3,.5) -- (.8,0) -- (.8,-.2);
\draw[rounded corners = 2.5mm] (2.2,1.2) -- (2.2,1) --(1.7,.5) -- (2.2,0) -- (2.2,-.2);
\draw[dashed, thick] (.8,1.2) arc (180:0:.7cm and .5cm);
\draw[dashed, thick] (.8,-.2) arc (-180:0:.7cm and .5cm);
\draw (1,.5) node{\small{$1$}};

\begin{scope}[xshift = 3.5cm]
\draw[white!50!black] (1,0) -- (2,1);
\draw[white!50!black] (2,0) -- (1.7,.3);
\draw[white!50!black] (1,1) -- (1.3,.7);

\draw[rounded corners = 2.5mm] (1,1.4) -- (1,1.2) -- (1.5,.7)  -- (2,1.2) -- (2,1.4);
\draw[rounded corners = 2.5mm] (1,-.4) -- (1,-.2) -- (1.5,.3) -- (2,-.2) -- (2,-.4);
\draw [dashed, thick] (1,1.4) arc (180:0:.5cm and .4cm);
\draw [dashed, thick] (1,-.4) arc (-180:0:.5cm and .4cm);

\draw (1.5,1) node{\small{$1$}};
\draw (1.5,0) node{\small{$x$}};

\end{scope}

\draw[ultra thick, ->] (3,.5) -- (3.8,.5);

\end{scope}

%--------------------------------%
% 1 \mapsto x \otimes 1

\begin{scope}[xshift = 8cm, yshift = -3cm]

\draw[white!50!black] (1,0) -- (2,1);
\draw[white!50!black] (2,0) -- (1.7,.3);
\draw[white!50!black] (1,1) -- (1.3,.7);

\draw[rounded corners = 2.5mm] (.8,1.2) -- (.8,1) -- (1.3,.5) -- (.8,0) -- (.8,-.2);
\draw[rounded corners = 2.5mm] (2.2,1.2) -- (2.2,1) --(1.7,.5) -- (2.2,0) -- (2.2,-.2);
\draw[dashed, thick] (.8,1.2) arc (180:0:.7cm and .5cm);
\draw[dashed, thick] (.8,-.2) arc (-180:0:.7cm and .5cm);
\draw (1,.5) node{\small{$1$}};

\begin{scope}[xshift = 3.5cm]
\draw[white!50!black] (1,0) -- (2,1);
\draw[white!50!black] (2,0) -- (1.7,.3);
\draw[white!50!black] (1,1) -- (1.3,.7);

\draw[rounded corners = 2.5mm] (1,1.4) -- (1,1.2) -- (1.5,.7)  -- (2,1.2) -- (2,1.4);
\draw[rounded corners = 2.5mm] (1,-.4) -- (1,-.2) -- (1.5,.3) -- (2,-.2) -- (2,-.4);
\draw [dashed, thick] (1,1.4) arc (180:0:.5cm and .4cm);
\draw [dashed, thick] (1,-.4) arc (-180:0:.5cm and .4cm);

\draw (1.5,1) node{\small{$x$}};
\draw (1.5,0) node{\small{$1$}};

\end{scope}

\draw[ultra thick, ->] (3,.5) -- (3.8,.5);

\end{scope}

%--------------------------------%
% x \mapsto x \otimes x

\begin{scope}[xshift = 8cm, yshift = -6cm]

\draw[white!50!black] (1,0) -- (2,1);
\draw[white!50!black] (2,0) -- (1.7,.3);
\draw[white!50!black] (1,1) -- (1.3,.7);

\draw[rounded corners = 2.5mm] (.8,1.2) -- (.8,1) -- (1.3,.5) -- (.8,0) -- (.8,-.2);
\draw[rounded corners = 2.5mm] (2.2,1.2) -- (2.2,1) --(1.7,.5) -- (2.2,0) -- (2.2,-.2);
\draw[dashed, thick] (.8,1.2) arc (180:0:.7cm and .5cm);
\draw[dashed, thick] (.8,-.2) arc (-180:0:.7cm and .5cm);
\draw (1,.5) node{\small{$x$}};

\begin{scope}[xshift = 3.5cm]
\draw[white!50!black] (1,0) -- (2,1);
\draw[white!50!black] (2,0) -- (1.7,.3);
\draw[white!50!black] (1,1) -- (1.3,.7);

\draw[rounded corners = 2.5mm] (1,1.4) -- (1,1.2) -- (1.5,.7)  -- (2,1.2) -- (2,1.4);
\draw[rounded corners = 2.5mm] (1,-.4) -- (1,-.2) -- (1.5,.3) -- (2,-.2) -- (2,-.4);
\draw [dashed, thick] (1,1.4) arc (180:0:.5cm and .4cm);
\draw [dashed, thick] (1,-.4) arc (-180:0:.5cm and .4cm);

\draw (1.5,1) node{\small{$x$}};
\draw (1.5,0) node{\small{$x$}};

\end{scope}

\draw[ultra thick, ->] (3,.5) -- (3.8,.5);

\end{scope}
\end{tikzpicture}$$
\caption{Pairs of enhanced states $(S_0,S_1)$ such that the incidence number $\delta(S_0,S_1)=1$. The dashed arcs indicate how the arcs of the Kauffman state are  connected globally.}
\label{figure:Incidence}
\end{figure}
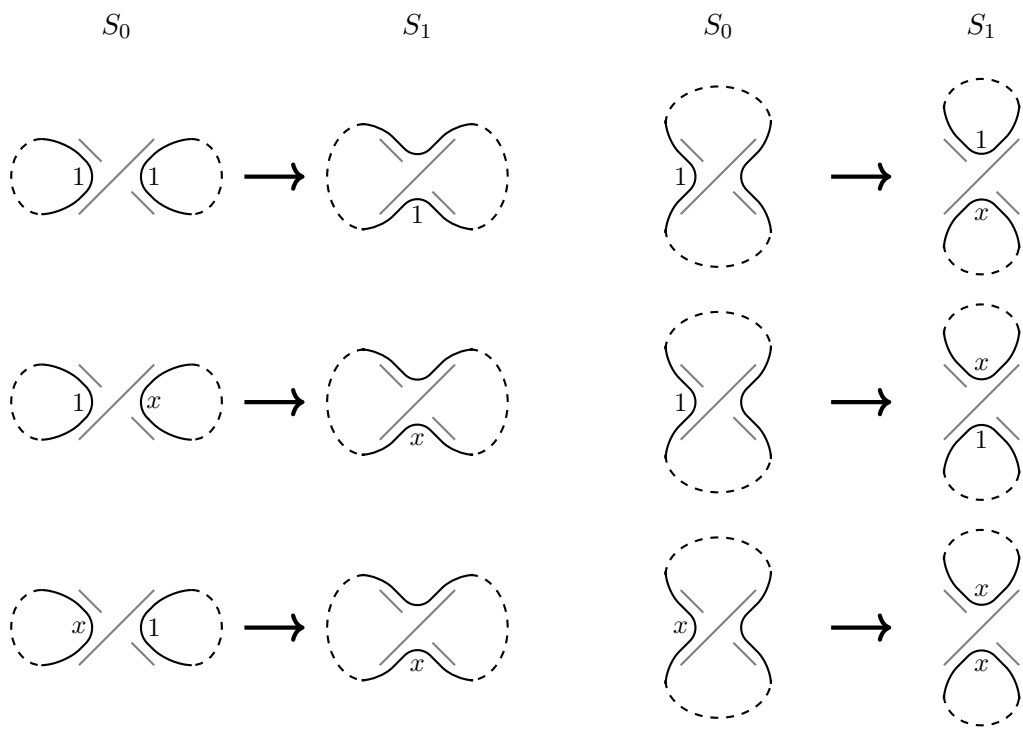

The last ingredient to define the differential is to assign signs to certain pairs of enhanced states. Arbitrarily number the crossings of $D$ from $1$ to $c=c(D)$. Suppose that $S_0$ and $S_1$ are enhanced states with underlying Kauffman states $s_0$ and $s_1$ such that 
\begin{enumerate}
\item $s_1$ can be obtained from $s_0$ by changing a single $A$-resolution to a $B$-resolution at crossing $k$ for some $k$ where $1\leq k \leq c$, and
\item the number of $B$-resolutions in $s_0$ associated to crossings with labels strictly less than $k$ is odd.
\end{enumerate}
Then define $\varepsilon(S_0,S_1)=-1$. In all other cases, define $\varepsilon(S_0,S_1)=1$.

The differential $d^{i,j}:CKh^{i,j}(D)\to CKh^{i+1,j}(D)$ is defined by
$$d^{i,j}(S_0) = \sum_{\substack{S_1\in\mathcal{S}_{\operatorname{en}}(D) \\
i(S_1)=i(S_0)+1\\
j(S_1)=j(S_0)}}
 \varepsilon(S_0,S_1)\delta(S_0,S_1)\; S_1.$$
In other words, in the differential of $S_0$, we sum over all enhanced states $S_1$ with the same quantum grading and whose homological grading is one greater than that of $S_0$. The term $S_1$ appears with nonzero coefficient if and only if the incidence number $\delta(S_0,S_1)\neq 0$. The coefficient of the term is $\pm 1$ as determined by the sign $\varepsilon(S_0,S_1)$.

It can be checked that $d^{i+1,j}\circ d^{i,j}:CKh^{i,j}(D)\to CKh^{i+2,j}(D)$ is the zero map. Thus for each $j$ we have a chain complex. Define the \textit{unshifted Khovanov homology} of $D$ to be the homology
$$\underline{Kh}^{i,j}(D) = \operatorname{ker} d^{i,j}/ \operatorname{im} d^{i-1,j}.$$
Suppose that $D$ is a diagram of the link $L$ such that $D$ has $c_+=c_+(D)$ positive crossings and $c_-=c_-(D)$ negative crossings. The Khovanov homology of $L$, denoted equivalently as $Kh(L)$ or $Kh(D)$, is defined by $Kh(L)=\bigoplus_{i,j \in \mathbb{Z}}Kh^{i,j}(L)$ where
\begin{equation}
\label{equation:shift}
Kh^{i,j}(L) = \underline{Kh}^{i+c_-,j-c_+ + 2c_-}(D).
\end{equation}

Although unshifted Khovanov homology $\underline{Kh}(D)$ is not a link invariant, it will be useful to describe how it changes under diagrammatic moves that preserve the link type, specifically, under the Reidemeister moves and under flyping. A \textit{flype} is the move on a $2$-tangle $R$ depicted in Figure \ref{figure:Flype}.
\begin{figure}[h]
$$\begin{tikzpicture}[thick]

\draw[rounded corners = 4mm] (3,.5) -- (-1,.5) -- (-2,-.5) -- (-3,-.5);
\draw[rounded corners = 4mm] (-3,.5) -- (-2,.5) -- (-1.7,.2);
\draw[rounded corners = 4mm] (-1.3,-.2) -- (-1,-.5) -- (3,-.5);

\fill[white!70!black] (0,0) circle (1cm);
\draw (0,0) circle (1cm);
\draw (0,0) node {$R$};

\draw[ultra thick, ->] (4,0) -- (5,0);

%-----------------------------%
\begin{scope}[xshift = 9cm]
\draw[rounded corners = 4mm] (-3,.5) -- (1,.5) -- (1.3,.2);
\draw[rounded corners = 4mm] (1.7,-.2) -- (2,-.5) -- (3,-.5);
\draw[rounded corners = 4mm] (-3,-.5) -- (1,-.5) -- (2,.5) -- (3,.5);

\fill[white!70!black] (0,0) circle (1cm);
\draw (0,0) circle (1cm);
\draw (0,0) node {\reflectbox{\rotatebox[origin=c]{180}{$R$}}};
\end{scope}

\end{tikzpicture}$$
\caption{A flype of the tangle $R$.}
\label{figure:Flype}
\end{figure}
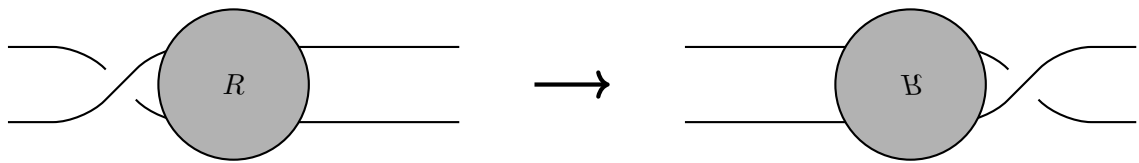

Describing how $\underline{Kh}(D)$ changes amounts to computing the change in the number of positive and negative crossings caused by the diagrammatic move. Equation \ref{equation:shift} implies the following proposition.

\begin{proposition}
\label{prop:Reidemeister}
Reidemeister moves and flypes change $\underline{Kh}(D)$ in the following ways.
\begin{itemize}
\item Positive Reidemeister 1: $\underline{Kh}^{i,j}\left(~
\tikz[baseline=.6ex, scale = .6]{
\draw[thick] (0,0) -- (0,1);
}~\right) \cong \underline{Kh}^{i,j-1}\left(~
\tikz[baseline = .6ex, scale = .6]{
\draw[thick, rounded corners = .8mm] (0,0) -- (0,.5) -- (.2,.7) -- (.4,.5) -- (.2,.3) -- (.1,.4);
\draw[thick, rounded corners = .8mm] (-.03,.55) -- (-.1,.7) -- (0,1);
}\right).$
\item Negative Reidemeister 1: $\underline{Kh}^{i,j}\left(~
\tikz[baseline=.6ex, scale = .6]{
\draw[thick] (0,0) -- (0,1);
}~\right) \cong \underline{Kh}^{i+1,j+2}\left(~
\tikz[baseline = .6ex, scale = .6]{
\draw[thick, rounded corners = .8mm] (0,1) -- (0,.5) -- (.2,.3) -- (.4,.5) -- (.2,.7) -- (.1,.6);
\draw[thick, rounded corners = .8mm] (-.03,.45) -- (-.1,.3) -- (0,0);
}\right).$ 
\item Reidemeister 2: $\underline{Kh}^{i,j}\left(~
\tikz[baseline=.6ex, scale = .6]{
\draw[thick] (0,0) -- (0,1);
\draw[thick] (0.5,0) -- (0.5,1);
}~\right) \cong \underline{Kh}^{i+1,j+1}\left(~
\tikz[baseline=.6ex, scale = .6]{
\draw[thick, rounded corners = 2mm] (0,0) -- (.5,.5) -- (0,1);
\draw[thick] (.5,1) -- (.3,.8);
\draw[thick] (.3,.2) -- (.5,0);
\draw[thick, rounded corners = 1mm] (.2,.7) -- (0,.5) -- (.2,.3);
}~\right).$

\item A flype or a Reidemeister move of type 3 does not change the unshifted Khovanov homology $\underline{Kh}(D)$.
\end{itemize}
\end{proposition}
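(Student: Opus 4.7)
The plan is to derive all four claims from the identity
\begin{equation*}
Kh^{i,j}(L) = \underline{Kh}^{i+c_-(D),\, j-c_+(D)+2c_-(D)}(D)
\end{equation*}
recorded in (\ref{equation:shift}), together with the link invariance of $Kh$. For each diagrammatic move $D \to D'$, I would compute how the counts $c_+$ and $c_-$ change; the shift formula then forces the corresponding shift of bigrading on $\underline{Kh}$, since the expressions $\underline{Kh}^{i+c_-(D),\, j-c_+(D)+2c_-(D)}(D)$ and $\underline{Kh}^{i+c_-(D'),\, j-c_+(D')+2c_-(D')}(D')$ must both be equal to $Kh^{i,j}(L)$.

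For the Reidemeister moves the bookkeeping is immediate. A positive R1 kink is a positive crossing regardless of the orientation chosen on the strand, so $c_+$ increases by one and $c_-$ is unchanged; substituting into the shift formula and relabeling gives the shift $\underline{Kh}^{i,j} \cong \underline{Kh}^{i,j-1}$ stated for positive R1. The negative R1 case is symmetric and yields the $(i+1,j+2)$ shift. An R2 move creates one positive and one negative crossing regardless of orientation, so $c_+$ and $c_-$ each increase by one, producing the $(i+1,j+1)$ shift. Finally, an R3 move leaves all three crossings with the same signs, because the sign of an oriented crossing depends only on the orientations of the two strands meeting at it and these orientations are unaltered by the move; thus $c_\pm$ and hence $\underline{Kh}$ are both unchanged.

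The flype case is the only one requiring a moment's care. After fixing an orientation on the four external strands of the tangle $R$, one checks that each crossing inside $R$ retains its sign under the $180^\circ$ planar rotation of $R$, since the sign of an oriented crossing is invariant under a rotation of the plane. For the crossing being transferred from one side of $R$ to the other, the over/under structure reverses, but the local orientations of the two crossing strands reverse as well, and two orientation reversals at a single crossing preserve its sign. Consequently $c_+$ and $c_-$ are each individually preserved by the flype, and the shift formula then gives $\underline{Kh}(D) \cong \underline{Kh}(D')$ with no bigrading shift.

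The main obstacle is the sign-preservation verification for the transferred crossing in a flype; it requires a careful local orientation analysis at that corner of $R$. The Reidemeister steps, by contrast, reduce to mechanical substitutions into the shift formula once one records the sign of each crossing that is created or destroyed.
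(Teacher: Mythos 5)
Your proposal is correct and is exactly the paper's argument: the authors likewise deduce the proposition directly from Equation (\ref{equation:shift}) by tracking how $c_+(D)$ and $c_-(D)$ change under each move, and your bookkeeping for R1, R2, R3, and the flype matches the stated shifts. The only nitpick is that a flype is a rotation of the tangle about an axis lying in the projection plane (so over- and under-strands are exchanged) rather than a planar rotation, but the conclusion you need --- that each crossing sign, and hence each of $c_\pm$, is preserved --- still holds.
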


Let $D$ be a link diagram with $c=c(D)$ crossings, and let $\overline{D}$ be the mirror of $D$. The Khovanov complex $CKh(\overline{D})$ is the dual of the complex $CKh(D)$. Consequently, the following isomorphisms hold:
\begin{align}
\begin{split}
\label{equation:Mirror}
\underline{Kh}^{i,j}(\overline{D};\mathbb{Q}) \cong & \; \underline{Kh}^{c-i,c-j}(D;\mathbb{Q})~\text{and}\\
\operatorname{Tor}\underline{Kh}^{i,j}(\overline{D}) \cong & \; \operatorname{Tor} \underline{Kh}^{c-i+1,c-j}(D).
\end{split}
\end{align}

For some crossing $X$ of $D$, let $D_A$ and $D_B$ represent the diagrams obtained by choosing an $A$-resolution and a $B$-resolution at the crossing $X$ respectively and not changing any of the other crossings. There is a natural bijection between the enhanced states of $D$ and the disjoint union of the enhanced states of $D_A$ and $D_B$: $\mathcal{S}_{\operatorname{en}}(D) \stackrel{1:1}{\longleftrightarrow} \mathcal{S}_{\operatorname{en}}(D_A)\sqcup \mathcal{S}_{\operatorname{en}}(D_B).$ Define $f:CKh(D_B)\to CKh(D)$ by $f(S)= S$ and extending linearly, where the first $S$ is an enhanced state considered as an element of $\mathcal{S}_{\operatorname{en}}(D_B)$ and the second $S$ is considered as an element of $\mathcal{S}_{\operatorname{en}}(D)$. The map $f$ increases both homological and quantum gradings by one, i.e. it is a map $f:CKh^{i-1,j-1}(D_B)\to CKh^{i,j}(D)$ for each $i$ and $j$. Define the map $g:CKh(D)\to CKh(D_A)$ by
$$g(S) = 
\begin{cases}
0 & \text{if $S$ has a $B$-resolution at crossing $X$,}\\
S & \text{if $S$ has an $A$-resolution at crossing $X$,}
\end{cases}$$
and extending linearly. The map $g$ preserves both homological and quantum gradings, i.e. it is a map $g:CKh^{i,j}(D)\to CKh^{i,j}(D_A)$. Both of the maps $f$ and $g$ are chain complex maps, that is both $f$ and $g$ commute with the differential. Also, $f$ is an injection, and $g$ is a surjection. Since $\operatorname{ker} g = \operatorname{im} f$, we have a short exact sequence of complexes
$$0 \to CKh(D_B)\xrightarrow{f} CKh(D) \xrightarrow{g} CKh(D_A)\to 0.$$
The long exact sequence in homology associated to this short exact sequence of complexes is given in the following theorem of Khovanov \cite{Kh:Jones}.
\begin{theorem}[Khovanov]
\label{theorem:LES}
Let $D$ be a link diagram, and let $D_A$ and $D_B$ be the $A$-resolution and $B$-resolution respectively of $D$ at a chosen crossing. For each $j$, there is a long exact sequence of unshifted Khovanov homology
$$\cdots \to \underline{Kh}^{i-1,j-1}(D_B) \xrightarrow{f_*} \underline{Kh}^{i,j}(D) \xrightarrow{g_*} \underline{Kh}^{i,j}(D_A) \xrightarrow{\partial} \underline{Kh}^{i,j-1}(D_B)\to\cdots$$
where $\partial$ is the boundary map in the snake lemma.
\end{theorem}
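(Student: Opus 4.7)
The groundwork is essentially laid out in the paragraph preceding the statement: the maps $f$ and $g$ are defined and a short exact sequence $0 \to CKh(D_B) \xrightarrow{f} CKh(D) \xrightarrow{g} CKh(D_A) \to 0$ is produced from the bijection $\mathcal{S}_{\operatorname{en}}(D) \leftrightarrow \mathcal{S}_{\operatorname{en}}(D_A) \sqcup \mathcal{S}_{\operatorname{en}}(D_B)$. My plan is to restrict to a fixed quantum grading, verify that $f$ and $g$ are chain maps, and then invoke the zig-zag lemma.

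First I would fix $j$. Because $f$ sends $CKh^{i-1,j-1}(D_B)$ into $CKh^{i,j}(D)$ and $g$ sends $CKh^{i,j}(D)$ onto $CKh^{i,j}(D_A)$, restricting to these gradings yields, in each homological degree $i$, a short exact sequence of $\mathbb{Z}$-modules
$$0 \to CKh^{i-1,j-1}(D_B) \xrightarrow{f} CKh^{i,j}(D) \xrightarrow{g} CKh^{i,j}(D_A) \to 0.$$
Injectivity of $f$ is immediate, as $f$ is the inclusion of enhanced states having a $B$-resolution at the chosen crossing $X$; surjectivity of $g$ is immediate, since any enhanced state of $D_A$ lifts canonically to an enhanced state of $D$ with an $A$-resolution at $X$; and exactness in the middle follows because $\operatorname{ker} g$ consists precisely of enhanced states with a $B$-resolution at $X$, which is exactly $\operatorname{im} f$.

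The main technical step is to verify that $f$ and $g$ commute with the differentials. The incidence number $\delta(S_0,S_1)$ depends only on how the underlying Kauffman states are related by a single resolution change at some crossing $k \neq X$, and on the labels assigned to the merged or split components; the resolution at $X$ plays no role in determining $\delta$, so the $\delta$-values agree whether computed in $D$, $D_A$, or $D_B$. The subtlety is in the signs $\varepsilon(S_0,S_1)$. I would order the crossings of $D$ so that $X$ is labeled last (say, crossing $c$) and use the induced orderings on $D_A$ and $D_B$ for the remaining crossings. With this choice, $\varepsilon$ counts $B$-resolutions only at indices strictly less than $k$, and since $k < c$ in every differential arising in the image of $f$ or $g$, the resolution at $X$ never contributes. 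The signs therefore agree across $D$, $D_A$, and $D_B$, and $f$ and $g$ are chain maps. This sign bookkeeping is the only real obstacle, and it is more tedious than deep.

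Finally, the standard zig-zag (snake) lemma applied to the short exact sequence of chain complexes in quantum grading $j$ produces a long exact sequence in cohomology. Identifying $H^i$ of each term with the corresponding unshifted Khovanov group (noting that the quantum grading on the $D_B$ side is shifted by $-1$ relative to that on $D$) yields exactly
$$\cdots \to \underline{Kh}^{i-1,j-1}(D_B) \xrightarrow{f_*} \underline{Kh}^{i,j}(D) \xrightarrow{g_*} \underline{Kh}^{i,j}(D_A) \xrightarrow{\partial} \underline{Kh}^{i,j-1}(D_B) \to \cdots,$$
with $\partial$ the connecting homomorphism of the snake lemma.
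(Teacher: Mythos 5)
Your proposal is correct and follows essentially the same route as the paper, which constructs the short exact sequence $0 \to CKh(D_B)\xrightarrow{f} CKh(D) \xrightarrow{g} CKh(D_A)\to 0$ in the paragraph preceding the theorem and then invokes the associated long exact sequence in homology. Your added care with the sign convention (ordering the resolved crossing last so that $\varepsilon$ never counts it) correctly justifies the assertion, left implicit in the paper, that $f$ and $g$ commute with the differentials.
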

In other treatments of Khovanov homology, one typically sees two long exact sequences in homology: one for a positive crossing and one for a negative crossing. When working with unshifted Khovanov homology, there is only the single long exact sequence above. The long exact sequence above is at the center of most of the Khovanov homology computations in this paper. 

Lee \cite{Lee:Alternating} used the long exact sequence in Theorem \ref{theorem:LES} to prove that the Khovanov homology of a non-split alternating links is supported on two diagonals. Lee \cite{Lee:Khovanov} also defined a map $d_L:CKh^{i,j}(D;\mathbb{Q})\to CKh^{i+1,j+4}(D;\mathbb{Q})$ on the Khovanov complex with rational coefficients that anti-commutes with the Khovanov differential. The differential $d+d_L$ is non-decreasing with respect to the quantum grading and so one can define a filtration on the complex $CKh(D;\mathbb{Q})$ with differential $d+d_L$. Rasmussen \cite{Rasmussen:Slice} proved that this gives rise to a spectral sequence whose $E_2$ term is $Kh(L;\mathbb{Q})$ and that converges to $\mathbb{Q}^{2^\ell}$ where $L$ is an $\ell$-component link. An analysis of gradings yields the following theorem.
\begin{theorem}[Lee, Rasmussen]
\label{theorem:KnightMove}
Suppose that $\operatorname{rank} Kh^{i,j}(L) > 0$. Then $\operatorname{rank} Kh^{p,q}(L)>0$ where $(p,q) = (i,j+2), (i, j-2), (i+1,j+4k),$ or $(i-1,j-4k)$ for some positive integer $k$.
\end{theorem}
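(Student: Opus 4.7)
The plan is to invoke the Lee--Rasmussen spectral sequence from the discussion just preceding the theorem. By construction, it starts with $E_2 = Kh(L;\mathbb{Q})$, converges to $E_\infty \cong \mathbb{Q}^{2^\ell}$ (where $\ell$ is the number of components of $L$), and on each page $r \geq 2$ the differential $d_r$ raises the homological grading by $1$ and raises the quantum grading by $4k$ for some positive integer $k$ (since $d_L$ itself raises $j$ by $4$ and higher differentials come from iterated extensions). Because $\operatorname{rank} Kh^{i,j}(L) > 0$, tensoring with $\mathbb{Q}$ yields a nonzero class $\alpha \in E_2^{i,j}$, which I follow through the spectral sequence.

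If $\alpha$ does not survive to $E_\infty$, then at some page $r \geq 2$ either $d_r(\alpha) \neq 0$, producing a nonzero class at $(i+1, j+4k)$, or $\alpha = d_r(\beta)$ for some $\beta$, producing a nonzero class at $(i-1, j-4k)$. Since every later page is a subquotient of $E_2$, these positions must already carry nonzero classes on $E_2 = Kh(L;\mathbb{Q})$, which gives $\operatorname{rank} Kh^{p,q}(L) > 0$ for the corresponding $(p,q)$ in the statement.

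If instead $\alpha$ survives to $E_\infty$, then $\alpha$ represents one of the $2^\ell$ canonical Lee classes $\mathfrak{s}_o$ indexed by orientations $o$ of $L$. The remaining task is to produce a second canonical class at the same homological grading $i$ with quantum grading $j \pm 2$. For a knot this is immediate from Lee's structure theorem: the two canonical classes both sit in homological grading $0$ and have quantum gradings $s(K) \pm 1$. For a general link one pairs $\mathfrak{s}_o$ with $\mathfrak{s}_{o'}$, where $o'$ reverses the orientation of a single chosen component; applying Beliakova--Wehrli's grading formulas for canonical Lee classes, one checks that $\mathfrak{s}_{o'}$ lies at the same homological grading as $\mathfrak{s}_o$ with quantum grading differing by exactly $\pm 2$. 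This last step---selecting an orientation reversal that produces a partner at precisely $\pm 2$ in quantum grading---is the main obstacle, since it requires careful analysis of the grading formulas rather than the immediate symmetry available for knots; everything else is routine spectral sequence bookkeeping.
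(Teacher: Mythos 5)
The paper offers no proof of this theorem beyond the paragraph preceding it: Lee's endomorphism $d_L$, Rasmussen's spectral sequence with $E_2\cong Kh(L;\mathbb{Q})$ converging to $\mathbb{Q}^{2^\ell}$, and the remark that ``an analysis of gradings'' finishes the job. Your proposal is that same approach, and your handling of the non-surviving classes is correct: the total differential $d+d_L$ shifts the quantum grading by $0$ or $4$, so every higher differential shifts $(i,j)$ by $(1,4k)$ with $k\geq 1$, and since each page is a subquotient of $E_2$, a class that dies forces $Kh^{i+1,j+4k}(L)$ or $Kh^{i-1,j-4k}(L)$ to be nonzero.

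The gap is in the surviving case for links with $\ell\geq 2$ components. You pair the canonical generator of an orientation $o$ with that of $o'$, where $o'$ reverses a single component, and assert that the two lie in the same homological grading. In general they do not: by Lee's grading formula, the canonical generator of the orientation reversing the components indexed by $E$ sits in homological degree $2\sum_{k\in E,\,l\notin E}\operatorname{lk}(L_k,L_l)$, so reversing one component shifts the homological degree by twice its total linking number with the rest of the link. Already for the positive Hopf link the two single-reversal generators live in homological degree $2$ while the other two live in degree $0$, so the partner you propose is not even in the same column. The pairing that works is $o$ with its total reverse $\bar{o}$ (for a knot the two coincide, which is why your knot case looks immediate). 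Even then, it is not the individual canonical classes that differ by $2$ in quantum grading --- the cycles representing a conjugate pair have the same filtration level --- but the two graded pieces contributed by the plane they span, detected by their sum and difference; and one must still control how these planes sit inside the associated graded of the full Lee homology. For the two-component unlink the four canonical classes produce associated graded of dimensions $1,2,1$ in quantum gradings $-2,0,2$, not two independent pairs $\{j_0,j_0+2\}$: the generator in grading $2$ only appears after taking a linear combination across the two conjugate pairs. This bookkeeping is precisely the ``analysis of gradings'' the theorem rests on, and it is the part your sketch leaves unproved.
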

\noindent Theorem \ref{theorem:KnightMove} also applies to unshifted Khovanov homology $\underline{Kh}(D)$.

Recall that a link diagram $D$ is $A$-adequate if no two arcs in the $A$-resolution of any crossing lie on the same component of the all-$A$ state of $D$.  Similarly a link diagram is $B$-adequate if no two arcs in the $B$-resolution of any crossing lie on the same component of the all-$B$ state. A reduced alternating diagram is both $A$-adequate and $B$-adequate. Define $j_{\min}(D)$ to be the minimum quantum grading where $\underline{Kh}^{*,j}(D)$ is nontrivial. Khovanov \cite{Kh:Patterns} proves the following theorem. We recall its proof since it is short and plays an important role in several later proofs.
\begin{theorem}[Khovanov]
\label{theorem:ExtAdeq}
Let $D$ be a link diagram, and let $s_A(D)$ and $s_B(D)$ be the number of components in the all-$A$ and all-$B$ resolutions of $D$ respectively.
\begin{enumerate}
\item If $D$ is $A$-adequate, then $j_{\min}(D)= -s_A(D)$ and $$\underline{Kh}^{*,-s_A(D)}(D) = \underline{Kh}^{0,-s_A(D)}(D)\cong \mathbb{Z}.$$ 
\item If $D$ is $B$-adequate, then $j_{\max}(D) = c(D) + s_B(D)$ and $$\underline{Kh}^{*,c(D)+s_B(D)}(D) = \underline{Kh}^{c(D),c(D)+s_B(D)}(D)\cong \mathbb{Z}.$$
\end{enumerate}
\end{theorem}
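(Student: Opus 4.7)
\medskip

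\noindent\textbf{Proof plan.} The plan is to bound the quantum gradings of enhanced states directly from the state sum description of $\underline{CKh}(D)$ and then locate the unique enhanced state that achieves the extremal grading. I will only sketch part (1); part (2) follows by the completely analogous argument, swapping $A$ with $B$, the all-$A$ state with the all-$B$ state, the label $x$ with the label $1$, and the minimum grading with the maximum.

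First I would record the two basic facts that govern the quantum grading: for every enhanced state $S$ with underlying Kauffman state $s$, one has $j(S)=b(S)+\theta(S)$ and $-|s|\le\theta(S)\le|s|$, so in particular $j(S)\ge b(s)-|s|$. Next I would observe the standard fact that if $s'$ is obtained from $s$ by switching a single $A$-resolution to a $B$-resolution, then $|s'|=|s|\pm 1$ (either two arcs in the same loop get split, or arcs in two different loops get merged). Iterating along a path of single-crossing switches from the all-$A$ state $s_A$ to any state $s$ with $b(s)=b$, this gives the coarse bound $|s|\le s_A(D)+b$.

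The crucial refinement comes from $A$-adequacy. At each crossing of $D$, the two arcs of the $A$-resolution lie on distinct components of $s_A$; switching that crossing to a $B$-resolution therefore joins two distinct loops into one, decreasing $|s|$ by $1$. Consequently, for any state $s$ with $b(s)\ge 1$ the very first step along any path from $s_A$ to $s$ is a merge, so
\[
|s|\le s_A(D)+b(s)-2,\qquad\text{hence}\qquad b(s)-|s|\ge 2-s_A(D).
\]
Combining with $j(S)\ge b(S)-|s|$, every enhanced state with $b(S)\ge 1$ satisfies $j(S)\ge 2-s_A(D)$. On the other hand, if $b(S)=0$ then $s=s_A$ and $j(S)=\theta(S)\ge -s_A(D)$, with equality precisely when every component of $s_A$ is labeled $x$.

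I would then conclude as follows. There is exactly one enhanced state with $j(S)=-s_A(D)$, namely the all-$A$ state with every loop labeled $x$; it has homological grading $i(S)=b(S)=0$. Therefore $\underline{CKh}^{*,-s_A(D)}(D)=\underline{CKh}^{0,-s_A(D)}(D)\cong\mathbb{Z}$, while $\underline{CKh}^{-1,-s_A(D)}(D)$ and $\underline{CKh}^{1,-s_A(D)}(D)$ both vanish (the former because $b\ge 0$, the latter by the inequality just established). The differentials into and out of this $\mathbb{Z}$-summand are zero, so $\underline{Kh}^{0,-s_A(D)}(D)\cong\mathbb{Z}$ and $j_{\min}(D)=-s_A(D)$, proving part (1).

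The only nontrivial step is the sharpened bound $|s|\le s_A(D)+b(s)-2$; once the $A$-adequacy hypothesis is correctly exploited to force the first switch to be a merge, the rest of the proof is a direct reading of the state sum, and no homological algebra beyond the triviality of the neighboring chain groups is needed.
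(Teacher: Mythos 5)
Your proposal is correct and follows essentially the same argument as the paper: both use $A$-adequacy to force the first $A\to B$ switch away from the all-$A$ state to be a merge, conclude that every enhanced state with $b(S)\geq 1$ has quantum grading at least $2-s_A(D)$, and identify the all-$x$ labeling of the all-$A$ state as the unique generator in grading $-s_A(D)$. The only cosmetic difference is that the paper deduces part (2) from part (1) via the mirror-image isomorphism rather than rerunning the symmetric argument.
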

\begin{proof}
Suppose that $s_0$ and $s_1$ are Kauffman states of an arbitrary diagram $D'$ such that $s_1$ can be obtained from $s_0$ by changing one $A$-resolution into a $B$-resolution. Then $|s_1|=|s_0| \pm 1$. The minimum quantum grading associated to an enhanced state $S_0$ with underlying Kauffman state either $s_0$ is $j(S_0)=b(s_0) - |s_0|$, and likewise, the minimum for an enhanced state $S_1$ with underlying state $s_1$ is $j(S_1)=b(s_1) - |s_1| = b(s_0) +1 - |s_1|.$

Since $D$ is $A$-adequate, each Kauffman state with exactly one $B$-resolution has $s_A(D) - 1$ components. Thus, the only enhanced state with quantum grading $-s_A(D)$ is the all-$A$ state with each component labeled with an $x$. Furthermore, there is no enhanced state with quantum grading less than $-s_A(D)$. Thus $j_{\min}(D)=-s_A(D)$. Since there is only one state in quantum grading $-s_A(D)$ and that state is in homological grading $0$, it follows that $\underline{Kh}^{*,-s_A(D)}(D) = \underline{Kh}^{0,-s_A(D)}(D)\cong \mathbb{Z}$. Statement (2) of the theorem is implied by statement (1) and Equation \ref{equation:Mirror}.
\end{proof}

\section{Turaev genus one and almost alternating links}
\label{section:TGAA}

In this section, we construct the Turaev surface, discuss results about the Turaev genus of a link (especially Turaev genus one links), and explore the connection between Turaev genus one and almost alternating links. Finally, we prove that every almost alternating link is $A$-almost alternating or $B$-almost alternating, and consequently, every Turaev genus one link is $A$-Turaev genus one or $B$-Turaev genus one.

\subsection{Definitions and lower bounds}

The Turaev surface of a link diagram $D$ is constructed as follows. Consider $D$ to be embedded on a sphere $S^2$ sitting inside of $S^3$. Embed the all-$A$ and all-$B$ states of $D$ in a neighborhood of $S^2$ but on opposite sides in $S^3$. Construct a cobordism between the all-$A$ and all-$B$ states that has saddles near the crossings of $D$ (see Figure \ref{figure:Saddle}) and consists of bands away from the crossings. The intersection of the cobordism with the sphere $S^2$ is precisely the diagram $D$. Capping off each boundary component of the cobordism with a disk forms the \textit{Turaev surface} of $D$. 

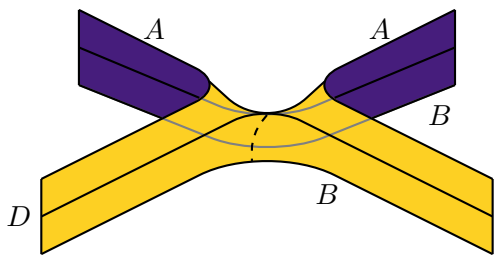
\begin{figure}[h]
$$\begin{tikzpicture}
\begin{scope}[thick]
\draw [rounded corners = 10mm] (0,0) -- (3,1.5) -- (6,0);
\draw (0,0) -- (0,1);
\draw (6,0) -- (6,1);
\draw [rounded corners = 5mm] (0,1) -- (2.5, 2.25) -- (0.5, 3.25);
\draw [rounded corners = 5mm] (6,1) -- (3.5, 2.25) -- (5.5,3.25);
\draw [rounded corners = 5mm] (0,.5) -- (3,2) -- (6,.5);
\draw [rounded corners = 7mm] (2.23, 2.3) -- (3,1.6) -- (3.77,2.3);
\draw (0.5,3.25) -- (0.5, 2.25);
\draw (5.5,3.25) -- (5.5, 2.25);
\end{scope}

\begin{pgfonlayer}{background2}
\fill [lsugold]  [rounded corners = 10 mm] (0,0) -- (3,1.5) -- (6,0) -- (6,1) -- (3,2) -- (0,1); 
\fill [lsugold] (6,0) -- (6,1) -- (3.9,2.05) -- (4,1);
\fill [lsugold] (0,0) -- (0,1) -- (2.1,2.05) -- (2,1);
\fill [lsugold] (2.23,2.28) --(3.77,2.28) -- (3.77,1.5) -- (2.23,1.5);

\fill [white, rounded corners = 7mm] (2.23,2.3) -- (3,1.6) -- (3.77,2.3);
\fill [lsugold] (2,2) -- (2.3,2.21) -- (2.2, 1.5) -- (2,1.5);
\fill [lsugold] (4,2) -- (3.7, 2.21) -- (3.8,1.5) -- (4,1.5);
\end{pgfonlayer}

\begin{pgfonlayer}{background4}
\fill [lsupurple] (.5,3.25) -- (.5,2.25) -- (3,1.25) -- (2.4,2.2);
\fill [rounded corners = 5mm, lsupurple] (0.5,3.25) -- (2.5,2.25) -- (2,2);
\fill [lsupurple] (5.5,3.25) -- (5.5,2.25) -- (3,1.25) -- (3.6,2.2);
\fill [rounded corners = 5mm, lsupurple] (5.5, 3.25) -- (3.5,2.25) -- (4,2);
\end{pgfonlayer}

\draw [thick] (0.5,2.25) -- (1.6,1.81);
\draw [thick] (5.5,2.25) -- (4.4,1.81);
\draw [thick] (0.5,2.75) -- (2.1,2.08);
\draw [thick] (5.5,2.75) -- (3.9,2.08);

\begin{pgfonlayer}{background}
\draw [black!50!white, rounded corners = 8mm, thick] (0.5, 2.25) -- (3,1.25) -- (5.5,2.25);
\draw [black!50!white, rounded corners = 7mm, thick] (2.13,2.07) -- (3,1.7)  -- (3.87,2.07);
\end{pgfonlayer}
\draw [thick, dashed, rounded corners = 2mm] (3,1.85) -- (2.8,1.6) -- (2.8,1.24);
\draw (0,0.5) node[left]{$D$};
\draw (1.5,3) node{$A$};
\draw (4.5,3) node{$A$};
\draw (3.8,.8) node{$B$};
\draw (5.3, 1.85) node{$B$};
\end{tikzpicture}$$
\caption{A saddle transitions between the all-$A$ and all-$B$ states in a neighborhood of each crossing of $D$.}
\label{figure:Saddle}
\end{figure}

If $D$ is a connected link diagram with $c(D)$ crossings whose all-$A$ state has $s_A(D)$ components and whose all-$B$ state has $s_B(D)$ components, then the genus $g_T(D)$ of the Turaev surface is given by
\begin{equation}
g_T(D) = \frac{1}{2}\left(2 + c(D) - s_A(D) - s_B(D)\right).
\end{equation}
Define the \textit{Turaev genus} $g_T(L)$ of the non-split link $L$ to be 
$$g_T(L) = \min \{g_T(D)~|~D~\text{is a diagram of}~L\}.$$

Turaev \cite{Turaev:Jones} showed that the Turaev surface of a link diagram $D$ is a sphere if and only if $D$ is the connected sum of alternating diagrams. Consequently, the Turaev genus of a link is zero if and only if it is alternating. Turaev also showed that the span of the Jones polynomial gives a lower bound for the difference between the crossing number and the Turaev genus of $L$
$$\operatorname{span} \widetilde{V}_L(t) \leq c(L) - g_T(L).$$

Thistlethwaite \cite{Thistlethwaite:Jones} proved that the Jones polynomial of an alternating link $L$ is an evaluation of the Tutte polynomial of the checkerboard graph of an alternating diagram of $L$. Dasbach, Futer, Kalfagianni, Lin, and Stoltzfus \cite{DFKLS:Graphs} extended this result by showing that the Jones polynomial of an arbitrary link is the evaluation of the Bollob\'as-Riordan-Tutte polynomial of a certain graph embedded in the Turaev surface. Dasbach and Lowrance \cite{DasLow:Approach} further extended this work by giving a Turaev surface model for Khovanov homology. Dasbach, Futer, Kalfagianni, Lin, and Stoltzfus \cite{DFKLS:Determinant} gave a formula for the determinant coming from the Turaev surface. This formula says that the determinant of a link of Turaev genus one is the difference between the number of spanning trees in two graphs dually embedded on the Turaev surface. See the recent survey \cite{CK:Survey} and chapter \cite{KK:Turaev} for more details.

Recall that a link diagram $D$ is almost alternating if one crossing change can transform $D$ into an alternating diagram, and that a link $L$ is almost alternating if it is non-alternating and has an almost alternating diagram. Almost alternating knots and links share many, but certainly not all, of the nice properties of alternating links. 

If $D$ is almost alternating, then $s_A(D) + s_B(D) = c(D)$, and thus $g_T(D)=1$. Therefore every almost alternating link is Turaev genus one. Define the \textit{dealternating number} $\dalt(D)$ of a link diagram $D$ to be the fewest number of crossings changes necessary to transform $D$ into an alternating diagram, and define the \textit{dealternating number} $\dalt(L)$ of the link $L$ by
$$\dalt(L)=\{ \dalt(D)~|~D~\text{is a diagram of}~L\}.$$
Alternating links are the class of links with dealternating number zero, and almost alternating links are the class of links with dealternating number one. 

Abe and Kishimoto \cite{AK:Dalt} proved that for any link $L$, the Turaev genus of $L$ is less than or equal to the dealternating number of $L$, i.e. $g_T(L)\leq \dalt(L)$. There are no known algorithms to compute either the Turaev genus or dealternating number of a link. Instead most computations of either invariant come from various lower bounds. Asaeda and Prztycki \cite{AP:Torsion} proved that the Khovanov homology of an almost alternating link is supported on at most three adjacent diagonals, and more generally that the Khovanov homology of a link is supported on at most $\dalt(L) + 2$ adjacent diagonals. Manturov \cite{Manturov:Minimal} and Champanerkar, Kofman, and Stoltzfus \cite{CKS:Graphs} proved a similar result using the Turaev genus of a link in place of the dealternating number, showing that the Khovanov homology of a link is supported on at most $g_T(L)+2$ adjacent diagonals. Dasbach and Lowrance \cite{DasLow:Concordance} refined these results to express the diagonal gradings of Khovanov homology where the link is supported in terms of the number of components of the all-$A$ and all-$B$ states.
\begin{theorem}[Dasbach, Lowrance]
\label{theorem:DiagonalGrading}
Let $L$ be a non-split link with diagram $D$. If $Kh^{i,j}(L)$ is nontrivial, then $2i-j \equiv s_A(D)-c_+(D)\mod 2$ and
$$s_A(D)-c_+(D) -2 \leq 2i-j \leq c_-(D) - s_B(D)+2.$$
If $D$ is a Turaev genus one diagram, then
$$s_A(D)-c_+(D) -2 \leq 2i-j \leq s_A(D)-c_+(D) +2.$$
\end{theorem}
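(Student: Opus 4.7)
The plan is to prove the general bound first; the Turaev genus one statement then follows immediately because $g_T(D) = (2+c(D)-s_A(D)-s_B(D))/2 = 1$ forces $s_A(D)+s_B(D) = c(D)$, which makes $c_-(D) - s_B(D) = s_A(D) - c_+(D)$ and collapses the general upper bound $c_- - s_B + 2$ onto the asserted $s_A - c_+ + 2$.

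For the parity assertion $2i-j\equiv s_A - c_+ \pmod{2}$, I would verify it on the chain level. Every enhanced state $S$ satisfies $j(S) = b(S)+\theta(S)$, and since $\theta(S) = \#1 - \#x$ and $|s(S)| = \#1 + \#x$, the parities $\theta(S)\equiv |s(S)| \pmod{2}$ agree. Toggling a single resolution changes $b(S)$ by $\pm 1$ and $|s(S)|$ by $\pm 1$, leaving $b(S)+|s(S)|$ invariant modulo $2$, and at the all-$A$ state this quantity equals $s_A(D)$. Hence $j(S)\equiv s_A(D)\pmod{2}$ for every generator of $CKh(D)$, and translating via $Kh^{i,j}(L) = \underline{Kh}^{i+c_-,\,j-c_++2c_-}(D)$ yields $2i-j \equiv s_A(D) - c_+(D) \pmod{2}$ on $Kh(L)$.

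For the bound I would induct on $c(D)$ using the long exact sequence of Theorem \ref{theorem:LES}. The base case is $c(D)=0$: since $L$ is non-split, $D$ is a one-component unknot diagram, $s_A = s_B = 1$, $c_\pm = 0$, and $Kh(L)$ sits in $(0,\pm 1)$, satisfying $-1 \le 2i-j \le 1$. For the inductive step, choose a crossing $X$ and form $D_A, D_B$. The essential identities are $s_A(D_A)=s_A(D)$ and $s_B(D_B)=s_B(D)$ (the all-$A$, respectively all-$B$, state is unchanged), while $s_A(D_B)$ and $s_B(D_A)$ each differ from their $D$-counterparts by $\pm 1$ depending on whether the switched resolution merges or splits circles. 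Applying the long exact sequence
$$\cdots \to \underline{Kh}^{i-1,j-1}(D_B) \to \underline{Kh}^{i,j}(D) \to \underline{Kh}^{i,j}(D_A) \to \cdots,$$
nonvanishing of $\underline{Kh}^{i,j}(D)$ forces nonvanishing of $\underline{Kh}^{i,j}(D_A)$ or of $\underline{Kh}^{i-1,j-1}(D_B)$. Substituting the inductive bounds for $D_A$ (same $\delta$-grading $2i-j$) and for $D_B$ (shifted by $-1$) and running through the four possible $\pm 1$ combinations for $s_A(D_B)$ and $s_B(D_A)$, a short case analysis yields $s_A(D)-2 \le 2i-j \le c(D)-s_B(D)+2$ in $\underline{Kh}$, which converts to the asserted $s_A(D)-c_+(D)-2 \le 2i-j \le c_-(D)-s_B(D)+2$ in $Kh(L)$.

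The main obstacle is that the inductive hypothesis is only available for non-split links, while the resolutions $D_A, D_B$ of a diagram of a non-split link may well represent split links, and the bound genuinely fails there: the two-component unlink has $Kh$-summands on three diagonals $2i-j\in\{-2,0,2\}$, whereas the naive bound would demand $2i-j=0$. To route around this I would either choose $X$ so that both resolutions stay non-split (applying the inductive hypothesis directly to $D_A$ and $D_B$), or strengthen the inductive statement to apply to any connected link diagram with a correction term reflecting the tensor-product behaviour of $\underline{Kh}$ under disjoint union, then verify that this correction vanishes whenever the long exact sequence is iterated within the non-split induction. Handling this split-versus-non-split bookkeeping cleanly is the delicate part of the argument.
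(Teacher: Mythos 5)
First, a caveat about the comparison itself: the paper does not prove Theorem \ref{theorem:DiagonalGrading} --- it is quoted from \cite{DasLow:Concordance} --- so there is no in-paper argument to match yours against. Evaluating your proposal on its own terms: the parity computation is correct (every enhanced state has $j(S)=b(S)+\theta(S)\equiv b(S)+|s|\equiv s_A(D)\pmod 2$, and the grading shift converts this to $2i-j\equiv s_A-c_+$), the reduction of the Turaev genus one case to the general bound via $s_A+s_B=c$ is correct, and the diagonal bookkeeping in the long exact sequence induction is correct and tight: using $s_A(D_A)=s_A(D)$, $s_B(D_B)=s_B(D)$, $s_A(D_B)\geq s_A(D)-1$, $s_B(D_A)\geq s_B(D)-1$, both the $D_A$ and the shifted $D_B$ terms land inside $[s_A(D)-2,\,c(D)-s_B(D)+2]$ in unshifted gradings.

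The genuine gap is the one you flag and do not close: the inductive hypothesis is simply unavailable when $D_A$ or $D_B$ represents a split link, and the statement is \emph{false} there even for connected diagrams. For example, the connected two-crossing diagram of the two-component unlink (two round circles meeting in two crossings, one component entirely over the other) has $c_+=c_-=1$ and $\{s_A,s_B\}=\{1,3\}$, so the claimed inequalities give a window of width $2$, whereas $Kh$ of the unlink occupies the three diagonals $2i-j\in\{-2,0,2\}$; one endpoint always escapes. This shows non-splitness cannot be traded for connectivity, so your route (a) would require producing a crossing both of whose resolutions are non-split links --- something that need not exist and is not detectable diagrammatically --- and your route (b) fails because the natural K\"unneth correction is $2(\mu-1)$ where $\mu$ is the number of split components: if $L$ is non-split but $L_A$ is split with $\mu=2$, the inductive bound for $D_A$ only yields $2I-J\geq s_A(D)-4$, which does not recover $s_A(D)-2$ for $D$; the correction does not ``vanish,'' it propagates. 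Handling exactly this point is the real content of the theorem (it is why Lee's argument for alternating links goes through --- connected alternating diagrams are automatically non-split --- and why the general case in \cite{Manturov:Minimal, CKS:Graphs, DasLow:Concordance} needs additional input beyond the raw long exact sequence), so as written the proposal is an accurate reduction of the problem to its hard step rather than a proof.
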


Although it is not explicitly stated, the work of Ozsv\'ath and Szab\'o in \cite{OS:Alternating} can be used to show that the knot Floer homology of a link $L$ can be supported on at most $\dalt(L)+1$ adjacent diagonals. Lowrance \cite{Lowrance:Floer} showed that the knot Floer homology of $L$ can be supported on at most $g_T(L)+1$ adjacent diagonals. These results can now be seen as a consequence of the analogous results for reduced Khovanov homology and the spectral sequence from reduced Khovanov homology to knot Floer homology \cite{Dowlin:Spectral}.

Let $\sigma(K)$ be the signature of the knot $K$, let $s(K)$ be the Rasmussen $s$-invariant \cite{Rasmussen:Slice}, and let $\tau(K)$ be the Ozsv\'ath-Szab\'o $\tau$-invariant \cite{OS:Tau}. Abe \cite{Abe:Alternation} proved that the quantities $\frac{1}{2}|s(K)+\sigma(K)|$ and $|\tau(K)+\sigma(K)/2|$ are lower bounds for the Gordian distance between a knot $K$ and the set of alternating knots. Therefore, 
\begin{align}\label{equation:concordance1}
\begin{split}
\frac{1}{2}|s(K)+\sigma(K)| \leq & \; \dalt(K)~\text{and}\\
\left|\tau(K)+\frac{\sigma(K)}{2}\right| \leq & \;\dalt(K).
\end{split}
\end{align}
Dasbach and Lowrance \cite{DasLow:Concordance} showed that for any knot $K$, both 
\begin{align}\label{equation:concordance2}
\begin{split}
\frac{1}{2}|s(K)+\sigma(K)| \leq & \; g_T(K)~\text{and}\\
\left|\tau(K)+\frac{\sigma(K)}{2}\right| \leq & \; g_T(K).
\end{split}
\end{align}

In a different direction, one can use the Jones polynomial to obstruct being Turaev genus one and almost alternating. Kauffman \cite{Kauffman:StateModels} proved that the both the leading and trailing coefficients of the Jones polynomial of an alternating link have absolute value one. Thistlethwaite \cite{Thistlethwaite:Jones} proved that the coefficients of the Jones polynomial of a non-split alternating link alternate in sign.
Dasbach and Lowrance \cite{DasLow:TuraevJones} proved that at least one of the leading or trailing coefficients of the Jones polynomial of a Turaev genus one or almost alternating link has absolute value one. Theorem \ref{theorem:Diagonal} can be viewed as a Khovanov homology generalization of this result. Extending this work, Lowrance and Spyropoulos \cite{LS:AAJones} proved that at least one of the first two or last two coefficients of the Jones polynomial of a Turaev genus one or almost alternating link alternate in sign and showed that the Jones polynomial of an $\ell$-component Turaev genus one or almost alternating link is different than the Jones polynomial of an $\ell$-component unlink. 

There is no known example of a link $L$ with $g_T(L) < \dalt(L)$. In particular, it is an open question whether every Turaev genus one link is almost alternating. The difficulty in producing such an example is that nearly all of the lower bounds on the dealternating number of a knot or link are also lower bounds on its Turaev genus. For more about distinguishing these two invariants see \cite{Lowrance:AltDist} and \cite{Lowrance:Encyclopedia}.

There is a small but growing number of computations of Turaev genus and dealternating number in the literature. Among knots with 11 or fewer crossings, all but two ($11n_{95}$ and $11n_{118}$) are known to be either alternating or Turaev genus one (and hence alternating or almost alternating) \cite{Adams:Almost, GHY:Almost}.  The knot $11n_{95}$ has Turaev genus and dealternating number two \cite{DasLow:TuraevJones}, while the Turaev genus and dealternating numbers of $11n_{118}$ are either one or two. Abe \cite{Abe:Alternation} and Lowrance \cite{Lowrance:Twisted} proved that the almost alternating and Turaev genus one  torus knots are $T_{3,4}$, $T_{3,5}$, and their mirrors. Abe and Kishimoto \cite{AK:Dalt} and Lowrance \cite{Lowrance:Twisted} computed the Turaev genus and dealternating numbers of $3$-stranded torus links and many closed $3$-braids. Abe \cite{Abe:Adequate} proved that any adequate diagram is Turaev genus minimizing. Jin, Lowrance, Polston, and Zheng \cite{JLPZ:TuraevTorus} computed the Turaev genus of all $4$-stranded torus knots and of many $5$-stranded and $6$-stranded torus knots. 

The definitions of both alternating links and almost alternating links are intrinsically diagrammatic. A link is alternating (or almost alternating) if it has a diagram that satisfies certain properties. Fox famously asked ``What is an alternating knot?" \cite[p. 32]{Lickorish:Book} - where he was asking for a topological characterization of alternating knots. Two such characterizations, one by Greene \cite{Greene:Alternating} and one by Howie \cite{Howie:Alternating}, recently appeared in the literature. Building on the ideas of Greene and Howie, Ito \cite{Ito:AlmostAlt} and Kim \cite{Kim:Toroidally} gave topological characterizations of almost alternating knots. There is currently no topological characterization of Turaev genus one knots or links.

\subsection{$A$/$B$-Turaev genus one and $A$/$B$-almost alternating}

In this section, we prove that every almost alternating link is $A$-almost alternating or $B$-almost alternating, and that every Turaev genus one link is either $A$-Turaev genus one or $B$-Turaev genus one. Throughout this section we use the notation depicted in Figure \ref{figure:aadiagram} for an almost alternating link diagram. The alternating tangle is $R$ and the four regions with the dealternator in their boundary are $u_1$, $u_2$, $v_1$, and $v_2$.

Alternating and almost alternating links have almost alternating diagrams. Let $D$ be an almost alternating diagram.   If the regions $u_1$ and $u_2$ in $D$ are the same region or if the regions $v_1$ and $v_2$ in $D$ are the same region, then $L$ is alternating. Similarly, if there is a crossing in $R$ contained in the boundary of regions $u_1$ and $u_2$ or if there is a crossing in $R$ contained in the boundary of $v_1$ and $v_2$, then $L$ is again alternating. If $L$ is almost alternating (and hence non-alternating), these configurations cannot exist in $D$. See Figure \ref{figure:Reducible}. Unless otherwise stated, we assume that all almost alternating diagrams in the paper have distinct $u_1$ and $u_2$, distinct $v_1$ and $v_2$, no crossing in $R$ in the boundary of $u_1$ and $u_2$, and no crossing in $R$ in the boundary of $v_1$ and $v_2$.
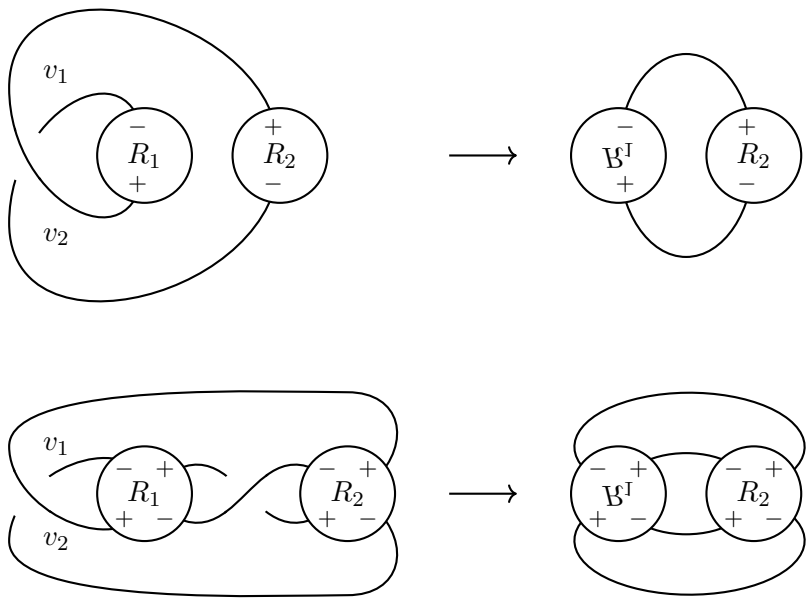
\begin{figure}[h]
$$\begin{tikzpicture}[thick, scale = .9]

%Almost alternating knot with v_1 = v_2

\begin{knot}[
	%draft mode = crossings,
	consider self intersections,
 	clip width = 5,
 	ignore endpoint intersections = true,
	end tolerance = 2pt
	]
	\flipcrossings{1}
	\strand (2,2) to [out = 90, in = 90, looseness =2]
	(0,1) to [out = 270, in = 270, looseness = 1.3]
	(4,2) to [out = 90, in = 90, looseness = 1.3]
	(0,3) to [out = 270, in = 270, looseness = 2]
	(2,2);
\end{knot}

\draw (.7,3.2) node{$v_1$};
\draw (.7,.8) node{$v_2$};

\fill[white] (2,2) circle (.7cm);
\draw[thick] (2,2) circle (.7cm);
\draw (2,2) node {$R_1$};
\draw (1.9,2.7) node[below]{\footnotesize{$-$}};
\draw (1.9,1.3) node[above]{\footnotesize{$+$}};

\fill[white] (4,2) circle (.7cm);
\draw[thick] (4,2) circle (.7cm);
\draw (4,2) node{$R_2$};
\draw (3.9,2.7) node[below]{\footnotesize{$+$}};
\draw (3.9,1.3) node[above]{\footnotesize{$-$}};

%Remove nugatory crossing

\draw [->] (6.5,2) -- (7.5,2);

\begin{scope}[xshift = 7cm]

\draw (3,2) ellipse (1cm and 1.5cm);

\fill[white] (2,2) circle (.7cm);
\draw[thick] (2,2) circle (.7cm);
\draw (2,2)  node{\raisebox{\depth}{\scalebox{1}[-1]{$R_1$}}};
\draw (2.1,2.7) node[below]{\footnotesize{$-$}};
\draw (2.1,1.3) node[above]{\footnotesize{$+$}};

\fill[white] (4,2) circle (.7cm);
\draw[thick] (4,2) circle (.7cm);
\draw (4,2) node{$R_2$};
\draw (3.9,2.7) node[below]{\footnotesize{$+$}};
\draw (3.9,1.3) node[above]{\footnotesize{$-$}};

\end{scope}

\begin{scope}[yshift = -5cm]
% v_1 adjacent to v_2 not just by dealternator

\draw [->] (6.5,2) -- (7.5,2);
\draw (.7,2.7) node{$v_1$};
\draw (.7,1.3) node{$v_2$};
\begin{knot}[
	%draft mode = crossings,
	consider self intersections,
 	clip width = 5,
 	ignore endpoint intersections = true,
	end tolerance = 2pt
	]
	\flipcrossings{1,2}
	\strand (5,2) to [out = 225, in = 45, looseness = 1.8]
	(2,2) to [out = 315, in = 135, looseness = 1.8]
	(5,2);
	
	\strand (2,2.3) to [out = 135, in = 90, looseness = 1]
	(0,1.3) to [out = 270, in = 180, looseness = .5]
	(5,.5) to [out = 0, in = 315, looseness = 1.5]
	(5.4,1.8) to [out = 90, in = 270]
	(5.4,2.2) to [out = 45, in = 0, looseness = 1.5]
	(5,3.5) to [out = 180, in = 90, looseness = .5]
	(0,2.7) to [out = 270, in = 225, looseness = 1]
	(2,1.7);
\end{knot}

\fill[white] (2,2) circle (.7cm);
\draw[thick] (2,2) circle (.7cm);
\draw (2,2) node {$R_1$};
\draw (1.7,2.65) node[below]{\footnotesize{$-$}};
\draw (2.3,2.65) node[below]{\footnotesize{$+$}};
\draw (2.3,1.35) node[above]{\footnotesize{$-$}};
\draw (1.7, 1.35) node[above]{\footnotesize{$+$}};

\fill[white] (5,2) circle (.7cm);
\draw[thick] (5,2) circle (.7cm);
\draw (5,2) node{$R_2$};
\draw (4.7,2.65) node[below]{\footnotesize{$-$}};
\draw (4.7,1.35) node[above]{\footnotesize{$+$}};
\draw (5.3,2.65) node[below]{\footnotesize{$+$}};
\draw (5.3,1.35) node[above]{\footnotesize{$-$}};

% Flype removes crossings

\begin{scope}[xshift = 7cm]

\draw (3,2) ellipse (1cm and .6cm);
\draw (3.9,2) arc (-60:240: 1.7cm and .8cm);
\draw (3.9,2) arc (60:-240: 1.7cm and .8cm);

\fill[white] (2,2) circle (.7cm);
\draw[thick] (2,2) circle (.7cm);
\draw (2,2)  node{\raisebox{\depth}{\scalebox{1}[-1]{$R_1$}}};
\draw (1.7,2.65) node[below]{\footnotesize{$-$}};
\draw (1.7,1.35) node[above]{\footnotesize{$+$}};
\draw (2.3,2.65) node[below]{\footnotesize{$+$}};
\draw (2.3,1.35) node[above]{\footnotesize{$-$}};

\fill[white] (4,2) circle (.7cm);
\draw[thick] (4,2) circle (.7cm);
\draw (4,2) node{$R_2$};
\draw (3.7,2.65) node[below]{\footnotesize{$-$}};
\draw (3.7,1.35) node[above]{\footnotesize{$+$}};
\draw (4.3,2.65) node[below]{\footnotesize{$+$}};
\draw (4.3,1.35) node[above]{\footnotesize{$-$}};

\end{scope}

\end{scope}
\end{tikzpicture}$$
\caption{Two almost alternating diagrams of alternating links. On the top, the regions $v_1$ and $v_2$ are the same, and removing the nugatory crossing yields an alternating diagram. On the bottom, the regions $v_1$ and $v_2$ are incident to the same crossing in the alternating tangle $R$. A flype followed by a Reidemeister 2 move yields an alternating diagram.}
\label{figure:Reducible}
\end{figure}

We now set about proving that every almost alternating link is $A$-almost alternating or $B$-almost alternating. In order to do so, we introduce the checkerboard graph of the link diagram. Let $D$ be an almost alternating diagram as in Figure \ref{figure:aadiagram}. We associate two checkerboard graphs $G$ and $\overline{G}$ to $D$ as follows. Color the complementary regions of $D$ in a checkerboard fashion so that the shading at each crossing looks like $\tikz[baseline=.6ex, scale = .4, thick]{
\fill[black!20!white] (0,0) -- (.5,.5) -- (0,1);
\fill[black!20!white] (1,1) -- (.5,.5) -- (1,0);
\draw (0,0) -- (1,1);
\draw (0,1) -- (.3,.7);
\draw (1,0) -- (.7,.3);
}$ or $\tikz[baseline=.6ex, scale = .4, thick]{
\fill[black!20!white] (0,0) -- (.5,.5) -- (1,0);
\fill[black!20!white] (1,1) -- (.5,.5) -- (0,1);
\draw (0,0) -- (1,1);
\draw (0,1) -- (.3,.7);
\draw (1,0) -- (.7,.3);
}$. The shaded regions of $D$ are in one-to-one correspondence with the vertices of $G$, and the unshaded regions of $D$ are in one-to-one correspondence with the vertices of $\overline{G}$ (or vice versa). The crossings of $D$ are in one-to-one correspondence with the edges of $G$ and with the edges of $\overline{G}$. Two vertices $u$ and $v$ are incident to an edge $e$ in $G$ (or in $\overline{G}$) if the regions associated to vertices $u$ and $v$ meet at the crossing associated to $e$. The graphs $G$ and $\overline{G}$ are planar duals of one another.

By a slight abuse of notation, the vertices associated to the regions meeting at the dealternator are labeled $u_1$, $u_2$, $v_1$, and $v_2$ as in Figure \ref{figure:aadiagram}. Define $G$ to be the checkerboard graph of $D$ containing the vertices $u_1$ and $u_2$, and define $\overline{G}$ be the checkerboard graph of $D$ containing the vertices $v_1$ and $v_2$. Let $G'$ and $\overline{G}'$ be the simplifications of $G$ and $\overline{G}$ respectively, that is $G'$ is $G$ with each sets of multiple edges identified into a single edge, and likewise for $\overline{G}'$. Define $\operatorname{adj}(u_1,u_2)$ to be the number of paths of length two between $u_1$ and $u_2$ in $G'$, and $\operatorname{adj}(v_1,v_2)$ to be the number of paths of length two between $v_1$ and $v_2$ in $\overline{G}'$. With this definition, an almost alternating diagram of an almost alternating link is $A$-almost alternating if and only if $\operatorname{adj}(u_1,u_2) = 0$ and is $B$-almost alternating if and only if $\operatorname{adj}(v_1,v_2) = 0$. In the next few lemmas, we show that if $\operatorname{adj}(u_1,u_2)$ and $\operatorname{adj}(v_1,v_2)$ satisfy certain constraints, then the link is alternating.

%%%%
\begin{lemma}
\label{lemma:alt}
If $D$ is an almost alternating diagram of a link $L$ such that $\operatorname{adj}(u_1,u_2)=2$ and $\operatorname{adj}(v_1,v_2)=1$ (or if $\operatorname{adj}(u_1,u_2)=1$ and $\operatorname{adj}(v_1,v_2)=2$), then $L$ is alternating.
\end{lemma}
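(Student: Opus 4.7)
The plan is to show, under the hypothesis $\operatorname{adj}(u_1,u_2)=2$ and $\operatorname{adj}(v_1,v_2)=1$ (the other case being symmetric, via switching the checkerboard coloring), that the dealternator and its immediate planar neighborhood in $D$ can be reduced, via a flype and one or two Reidemeister 2 moves, to an alternating diagram of $L$.

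Setting up notation, I would let $w_1, w_2$ be the two distinct white regions in $G'$ adjacent to both $u_1$ and $u_2$, and $b$ the unique black region in $\overline{G}'$ adjacent to both $v_1$ and $v_2$. All the witnessing crossings lie inside the alternating tangle $R$. For each $w_i$ fix crossings $e_i^1, e_i^2$ of $R$ where $w_i$ meets $u_1, u_2$ respectively, and fix crossings $f^1, f^2$ of $R$ where $b$ meets $v_1, v_2$. The first step would use planarity and the cyclic ordering $u_1, v_1, u_2, v_2$ around the dealternator to locate $b$: the dealternator edge together with the two length-two paths $u_1\text{-}w_i\text{-}u_2$ in $G$ assembles into a theta subgraph whose three complementary plane regions must, by the cyclic order, contain $v_1$ and $v_2$ in two of them and $b$ in the third (bounded by $u_1 w_1 u_2 w_2$). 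A short planar argument using our standing assumption that no crossing of $R$ is incident to both $v_1$ and $v_2$ then shows that, after relabelling, $f^1\in\{e_1^1,e_2^1\}$ and $f^2\in\{e_1^2,e_2^2\}$.

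Next I would upgrade this into a structural statement about $R$: the sub-region of the plane bounded by $e_1^1, e_2^1, e_1^2, e_2^2$ contains no further arcs or crossings of $R$ outside of $b$ itself. This is the heart of the proof. It uses the alternating condition on $R$ in an essential way, via a Jordan-curve-type argument: any additional nested piece of tangle inside the bounded sub-region would, by sign-consistency of the checkerboard coloring along an alternating tangle, produce a further black region adjacent to both $v_1$ and $v_2$, contradicting the uniqueness of $b$. With this step in hand, the local sub-diagram around the dealternator is exactly the reducible configuration depicted at the bottom of Figure~\ref{figure:Reducible}: two bigons flanking the dealternator with $b$ sandwiched between them, each bigon sitting between one of $v_1, v_2$ and the single non-alternating crossing.

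Finally, a flype past one of the two bigons relocates the dealternator into the alternating region, after which two Reidemeister~2 moves eliminate the bigon and yield an alternating diagram of $L$; in particular $L$ is alternating. The main obstacle, as flagged above, is the structural step ruling out extra crossings inside the theta-face that contains $b$. Translating the purely numerical hypothesis $\operatorname{adj}(v_1,v_2)=1$ into this planar statement is where the alternating nature of $R$ must be used carefully; by contrast, the flype reduction at the end is a routine move once the local configuration has been extracted.
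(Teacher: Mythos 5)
Your overall strategy---form the theta-graph from the dealternator edge and the two length-two $u_1$--$u_2$ paths, locate $b$ in the face bounded by the $4$-cycle $u_1w_1u_2w_2$, pin down $f^1$ and $f^2$, and then isotope---is the same as the paper's opening move, and the location of $b$ is correct. But the incidence conclusion you extract is not. Write $F_i$ for the face between the dealternator edge and the path through $w_i$, and $F_3$ for the face bounded by the $4$-cycle; then $v_1\subseteq\overline{F_1}$, $v_2\subseteq\overline{F_2}$, and $b\subseteq\overline{F_3}$, so the crossing $f^1$ joining $b$ to $v_1$ must lie on $\overline{F_3}\cap\overline{F_1}$, which is the path through $w_1$; likewise $f^2$ lies on the path through $w_2$. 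The correct conclusion is therefore $f^1\in\{e_1^1,e_1^2\}$ and $f^2\in\{e_2^1,e_2^2\}$ (one crossing from each length-two path), not $f^1\in\{e_1^1,e_2^1\}$ and $f^2\in\{e_1^2,e_2^2\}$ (one crossing at each of $u_1,u_2$). No relabelling reconciles these: your version excludes the case in which $f^1$ and $f^2$ are both incident to the same region $u_i$, which genuinely occurs and is exactly the paper's first case (Figure \ref{figure:altiso1}), while admitting the impossible case in which both lie on the same path.

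The more serious gap is the step you yourself flag as the heart of the proof: the claim that $\overline{F_3}$ contains nothing but $b$. The hypothesis $\operatorname{adj}(v_1,v_2)=1$ constrains only black regions adjacent to \emph{both} $v_1$ and $v_2$, and a black region inside $F_3$ can meet $v_1$ only at a crossing of the path through $w_1$ and $v_2$ only at a crossing of the path through $w_2$. An arbitrarily large alternating sub-tangle attached deep inside $F_3$ (hanging off $w_1$, $u_1$, $u_2$, or $b$ itself) contributes black regions adjacent to neither $v_1$ nor $v_2$; it is invisible to $\operatorname{adj}(v_1,v_2)$ and cannot be ruled out by your Jordan-curve argument. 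This is why the paper's proof does not shrink the diagram: it decomposes $D$ into three or four \emph{arbitrary} alternating tangles arranged around the theta-graph and carries them through explicit flypes and strand isotopies into an alternating diagram (Figures \ref{figure:altiso1} and \ref{figure:altiso2}). Consequently your endgame---two bigons flanking the dealternator, removed by Reidemeister 2 moves as at the bottom of Figure \ref{figure:Reducible}---does not apply: that picture is the excluded configuration in which $v_1$ and $v_2$ share a crossing, and in any case the generic diagram satisfying the hypotheses is not of that form. To repair the proof you would need the correct two-case incidence analysis and isotopies that move the dealternator past whole tangles, as in the paper.
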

\begin{proof}
Let $D$ be an almost alternating diagram of $L$ satisfying conditions (1) and (2) in Definition \ref{definition:ABalmostalternating} and also such that $\operatorname{adj}(u_1,u_2)=2$ and $\operatorname{adj}(v_1,v_2)=1$. Let $G$ be the checkerboard graph of $D$ containing $u_1$ and $u_2$. Suppose that the two paths of length two between $u_1$ and $u_2$ in $G$ have edge sets $\{e_1,e_2\}$ and $\{e_3,e_4\}$ respectively, where $e_1$ and $e_3$ are incident to $u_1$ and $e_2$ and $e_4$ are incident to $u_2$. 

The path of length two in $\overline{G}$ between $v_1$ and $v_2$ contains an edge dual to either $e_1$ or $e_2$ and an edge dual to either $e_3$ or $e_4$. If the path contains edges dual to $e_1$ and $e_3$ (or dual to $e_2$ and $e_4$), then up to symmetry, $D$ has a diagram as in the leftmost picture of Figure \ref{figure:altiso1}. If the path contains edges dual to $e_1$ and $e_4$ (or dual to $e_2$ and $e_3$), then up to symmetry, $D$ has diagram as in the leftmost picture of Figure \ref{figure:altiso2}. In both figures, the tangles $R_i$ are alternating tangles. Each figure shows an isotopy between $D$ and an alternating diagram.
\begin{figure}[h]
$$\begin{tikzpicture}[thick]

%-------------------------------%
\draw (2.3,2) -- (2.9,1.15);
\draw (3.1,.85) -- (3.7,0);
\draw (2.3,0) -- (3.7,2);

\draw (2.3,0) -- (2.9,-.85);
\draw (3.1,-1.15) -- (3.7,-2);
\draw (2.3,-2) -- (3.7,0);

\begin{scope}[rounded corners=3mm]
\draw (3,1.6) -- (1.5,1.6) -- (1.5,-.2);
\draw(3,.4) -- (1.7,.4);
\draw(3,-.4) -- (1,-.4) -- (.2,.4) -- (.2,2.4) -- (3,2.4);
\draw(1.3,.4) -- (1,.4) -- (.75,.15);
\draw(1.5,-.6) -- (1.5,-1.6) -- (3,-1.6);
\draw (.45,-.15) -- (.2,-.4) -- (.2,-2.4) -- (3,-2.4);

\end{scope}

\fill[white] (3,0) circle (.7);
\draw (3,0) circle (.7);
\draw (3,0) node{$R_2$};
\draw (2.8,.5) node{\footnotesize{$-$}};
\draw (3.2,.5) node{\footnotesize{$+$}};
\draw (2.8,-.5) node{\footnotesize{$+$}};
\draw (3.2,-.5) node{\footnotesize{$-$}};
\draw (2.6,.3) node{\footnotesize{$+$}};
\draw (2.6,-.3) node{\footnotesize{$-$}};

\fill[white](3,2) circle (.7);
\draw (3,2) circle (.7);
\draw (3,2) node{$R_1$};
\begin{scope}[yshift = 2cm]
\draw (2.8,-.5) node{\footnotesize{$+$}};
\draw (3.2,-.5) node{\footnotesize{$-$}};
\draw (2.6,.3) node{\footnotesize{$+$}};
\draw (2.6,-.3) node{\footnotesize{$-$}};
\end{scope}

\fill[white](3,-2) circle (.7);
\draw (3,-2) circle (.7);
\draw (3,-2) node{$R_3$};
\begin{scope}[yshift = -2cm]
\draw (2.8,.5) node{\footnotesize{$-$}};
\draw (3.2,.5) node{\footnotesize{$+$}};
\draw (2.6,.3) node{\footnotesize{$+$}};
\draw (2.6,-.3) node{\footnotesize{$-$}};
\end{scope}

%----------------------------%
\begin{scope}[xshift = 5cm]
\draw (2.8,2) -- (2.8,-2);
\draw (3.2,2) -- (3.2,-2);

\begin{scope}[rounded corners=3mm]
\draw (3,1.7) -- (2.2,1.7) -- (1.6,2.3) -- (.2,2.3) --(.2,.4) -- (1,-.4) -- (3,-.4);
\draw[rounded corners = 2mm] (3,2.3) -- (2.2,2.3) -- (2.05,2.15);
\draw (1.75,1.85) -- (1.5,1.6) -- (1.5,-.2);
\draw (3,.4) -- (1.7,.4);

\draw(1.3,.4) -- (1,.4) -- (.75,.15);
\draw[rounded corners = 2mm] (3,-1.7) -- (2.2,-1.7) -- (2.05,-1.85);
\draw[rounded corners = 2mm] (.45,-.15) -- (.2,-.4) -- (.2,-2.3) -- (1.6,-2.3) -- (1.75,-2.15);
\draw (3,-2.3) -- (2.2,-2.3) -- (1.5,-1.6) -- (1.5,-.6);

\end{scope}

\fill[white] (3,0) circle (.7);
\draw (3,0) circle (.7);
\draw (3,0) node{$R_2$};
\draw (2.8,.5) node{\footnotesize{$-$}};
\draw (3.2,.5) node{\footnotesize{$+$}};
\draw (2.8,-.5) node{\footnotesize{$+$}};
\draw (3.2,-.5) node{\footnotesize{$-$}};
\draw (2.6,.3) node{\footnotesize{$+$}};
\draw (2.6,-.3) node{\footnotesize{$-$}};

\fill[white](3,2) circle (.7);
\draw (3,2) circle (.7);
\draw (3,2) node{\reflectbox{$R_1$}};
\begin{scope}[yshift = 2cm]
\draw (2.8,-.5) node{\footnotesize{$+$}};
\draw (3.2,-.5) node{\footnotesize{$-$}};
\draw (2.6,.3) node{\footnotesize{$+$}};
\draw (2.6,-.3) node{\footnotesize{$-$}};
\end{scope}

\fill[white](3,-2) circle (.7);
\draw (3,-2) circle (.7);
\draw (3,-2) node{\reflectbox{$R_3$}};
\begin{scope}[yshift = -2cm]
\draw (2.8,.5) node{\footnotesize{$-$}};
\draw (3.2,.5) node{\footnotesize{$+$}};
\draw (2.6,.3) node{\footnotesize{$+$}};
\draw (2.6,-.3) node{\footnotesize{$-$}};
\end{scope}

\end{scope}

%----------------------------%
\begin{scope}[xshift = 9.5cm]
\draw (2.8,2) -- (2.8,-2);
\draw (3.2,2) -- (3.2,-2);

\begin{scope}[rounded corners=3mm]

\draw (3,-.3) -- (2.2,-.3) -- (1.6,.3) -- (1.6,1.7) -- (3,1.7);
\draw[rounded corners = 2mm] (3,.3) -- (2.2,.3) -- (2.05,.15);
\draw[rounded corners = 2mm] (1.75,-.15) -- (1.6,-.3) -- (1.6,-1.7) -- (3,-1.7);
\draw (3,2.3) -- (1,2.3) -- (1,-2.3) -- (3,-2.3);

\end{scope}

\fill[white] (3,0) circle (.7);
\draw (3,0) circle (.7);
\draw (3,0) node{$R_2$};
\draw (2.8,.5) node{\footnotesize{$-$}};
\draw (3.2,.5) node{\footnotesize{$+$}};
\draw (2.8,-.5) node{\footnotesize{$+$}};
\draw (3.2,-.5) node{\footnotesize{$-$}};
\draw (2.6,.3) node{\footnotesize{$+$}};
\draw (2.6,-.3) node{\footnotesize{$-$}};

\fill[white](3,2) circle (.7);
\draw (3,2) circle (.7);
\draw (3,2) node{\reflectbox{$R_1$}};
\begin{scope}[yshift = 2cm]
\draw (2.8,-.5) node{\footnotesize{$+$}};
\draw (3.2,-.5) node{\footnotesize{$-$}};
\draw (2.6,.3) node{\footnotesize{$+$}};
\draw (2.6,-.3) node{\footnotesize{$-$}};
\end{scope}

\fill[white](3,-2) circle (.7);
\draw (3,-2) circle (.7);
\draw (3,-2) node{\reflectbox{$R_3$}};
\begin{scope}[yshift = -2cm]
\draw (2.8,.5) node{\footnotesize{$-$}};
\draw (3.2,.5) node{\footnotesize{$+$}};
\draw (2.6,.3) node{\footnotesize{$+$}};
\draw (2.6,-.3) node{\footnotesize{$-$}};
\end{scope}

\end{scope}

\end{tikzpicture}$$
\caption{An isotopy between $D$ and an alternating diagram when $\operatorname{adj}(u_1,u_2)=2$ and $\operatorname{adj}(v_1,v_2)=1$. To obtain the second diagram from the first, perform two flypes, and to obtain the third diagram from the second perform isotopies of the strands on the left of the tangles.}
\label{figure:altiso1}
\end{figure}
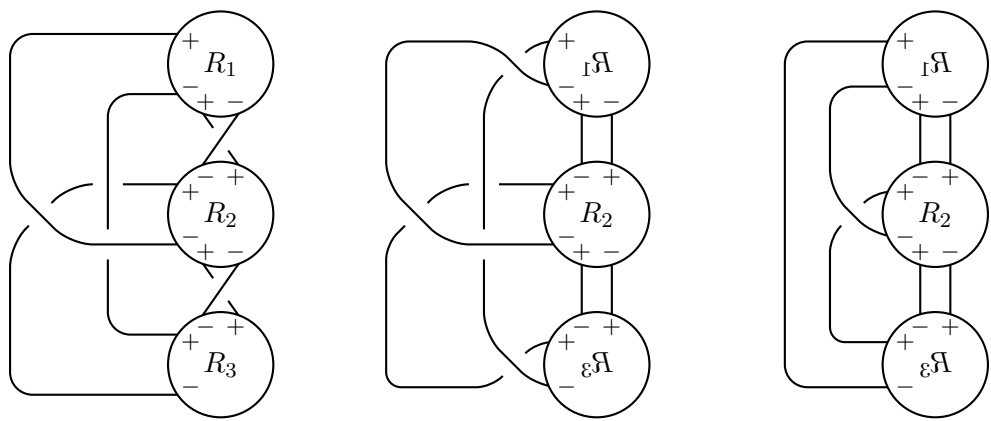

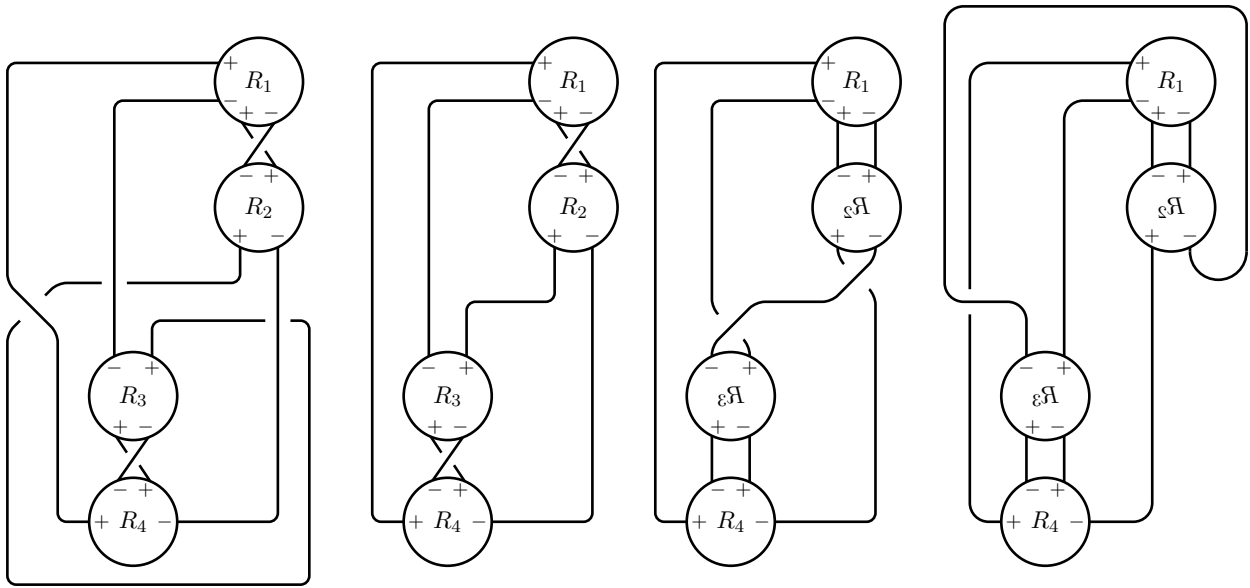
\begin{figure}[h]
\begin{adjustbox}{max width=\textwidth}
$$\begin{tikzpicture}[very thick, scale = 1]

\draw (.7,0) -- (.1,.9);
\draw (-.1,1.1) -- (-.7,2);
\draw (-.7,0) -- (.7,2);

\draw (2.7,5) -- (2.1, 5.9);
\draw (1.9,6.1) -- (1.3,7);
\draw (1.3,5) -- (2.7,7);

\begin{scope}[rounded corners]

\draw(2,6.7) -- (-.3,6.7) -- (-.3,2);
\draw(0,0) -- (2.3,0) -- (2.3,5);
\draw(1.7,5) -- (1.7,3.8) -- (-.1,3.8);
\draw(.3,2) -- (.3,3.2) -- (2.1,3.2);
\draw(2.5,3.2) -- (2.8,3.2) -- (2.8, -1) -- (-2,-1) -- (-2,3) -- (-1.8,3.2);
\draw (0,0) -- (-1.2,0) -- (-1.2,3) -- (-2,3.8) -- (-2,7.3) -- (2,7.3);
\draw (-.5,3.8) -- (-1.2,3.8) -- (-1.4,3.6);

\end{scope}

\fill[white](0,0) circle (.7cm);
\draw (0,0) circle (.7cm);
\draw (0,0) node{$R_4$};
\draw (-.5,0) node{\footnotesize{$+$}};
\draw (.5,0) node{\footnotesize{$-$}};
\draw (-.2,.5) node{\footnotesize{$-$}};
\draw (.2,.5) node{\footnotesize{$+$}};

\fill[white](0,2) circle (.7cm);
\draw (0,2) circle (.7cm);
\draw (0,2) node{$R_3$};
\draw (-.2,1.5) node{\footnotesize{$+$}};
\draw (.2,1.5) node{\footnotesize{$-$}};
\draw (-.3,2.45) node{\footnotesize{$-$}};
\draw (.3,2.45) node{\footnotesize{$+$}};

\fill[white](2,5) circle (.7cm);
\draw(2,5) circle (.7cm);
\draw (2,5) node{$R_2$};
\draw (1.7,4.55) node{\footnotesize{$+$}};
\draw (2.3,4.55) node{\footnotesize{$-$}};
\draw (1.8,5.5) node{\footnotesize{$-$}};
\draw (2.2,5.5) node{\footnotesize{$+$}};

\fill[white](2,7) circle (.7cm);
\draw (2,7) circle (.7cm);
\draw (2,7) node{$R_1$};
\draw (1.8,6.5) node{\footnotesize{$+$}};
\draw (2.2,6.5) node{\footnotesize{$-$}};
\draw (1.55,6.7) node{\footnotesize{$-$}};
\draw (1.55,7.3) node{\footnotesize{$+$}};

%----------------------------------------------------%
\begin{scope}[xshift = 5cm]
\draw (.7,0) -- (.1,.9);
\draw (-.1,1.1) -- (-.7,2);
\draw (-.7,0) -- (.7,2);

\draw (2.7,5) -- (2.1, 5.9);
\draw (1.9,6.1) -- (1.3,7);
\draw (1.3,5) -- (2.7,7);

\begin{scope}[rounded corners]

\draw (2,6.7) -- (-.3,6.7) -- (-.3,2);
\draw (0,0) -- (2.3,0) -- (2.3,5);
\draw (1.7,5) -- (1.7,3.5) --(.3,3.5) -- (.3,2); 

\draw (0,0) -- (-1.2,0) -- (-1.2,7.3) -- (2,7.3);

\end{scope}

\fill[white](0,0) circle (.7cm);
\draw (0,0) circle (.7cm);
\draw (0,0) node{$R_4$};
\draw (-.5,0) node{\footnotesize{$+$}};
\draw (.5,0) node{\footnotesize{$-$}};
\draw (-.2,.5) node{\footnotesize{$-$}};
\draw (.2,.5) node{\footnotesize{$+$}};

\fill[white](0,2) circle (.7cm);
\draw (0,2) circle (.7cm);
\draw (0,2) node{$R_3$};
\draw (-.2,1.5) node{\footnotesize{$+$}};
\draw (.2,1.5) node{\footnotesize{$-$}};
\draw (-.3,2.45) node{\footnotesize{$-$}};
\draw (.3,2.45) node{\footnotesize{$+$}};

\fill[white](2,5) circle (.7cm);
\draw(2,5) circle (.7cm);
\draw (2,5) node{$R_2$};
\draw (1.7,4.55) node{\footnotesize{$+$}};
\draw (2.3,4.55) node{\footnotesize{$-$}};
\draw (1.8,5.5) node{\footnotesize{$-$}};
\draw (2.2,5.5) node{\footnotesize{$+$}};

\fill[white](2,7) circle (.7cm);
\draw (2,7) circle (.7cm);
\draw (2,7) node{$R_1$};
\draw (1.8,6.5) node{\footnotesize{$+$}};
\draw (2.2,6.5) node{\footnotesize{$-$}};
\draw (1.55,6.7) node{\footnotesize{$-$}};
\draw (1.55,7.3) node{\footnotesize{$+$}};
\end{scope}
%----------------------------------------------------%
\begin{scope}[xshift = 9.5cm]
\draw (-.3,0) -- (-.3,2);
\draw (.3,0) -- (.3,2);
\draw (2.3,5) -- (2.3,7);
\draw (1.7,5) -- (1.7,7);

\begin{scope}[rounded corners]

\draw (2.3,5) -- (2.3,4.2) -- (1.6,3.5) --(.4,3.5)--(-.3,2.8) -- (-.3,2);
\draw (1.7,5) -- (1.7,4.2) -- (1.8,4.1);
\draw (2.2,3.7) -- (2.3,3.6) -- (2.3,0) -- (0,0);
\draw (.3,2) -- (.3,2.8) -- (.2,2.9);
\draw (-.2,3.3) -- (-.3,3.4) -- (-.3,6.7) -- (2,6.7);

\draw (0,0) -- (-1.2,0) -- (-1.2,7.3) -- (2,7.3);

\end{scope}

\fill[white](0,0) circle (.7cm);
\draw (0,0) circle (.7cm);
\draw (0,0) node{$R_4$};
\draw (-.5,0) node{\footnotesize{$+$}};
\draw (.5,0) node{\footnotesize{$-$}};
\draw (-.2,.5) node{\footnotesize{$-$}};
\draw (.2,.5) node{\footnotesize{$+$}};

\fill[white](0,2) circle (.7cm);
\draw (0,2) circle (.7cm);
\draw (0,2) node{\reflectbox{$R_3$}};
\draw (-.2,1.5) node{\footnotesize{$+$}};
\draw (.2,1.5) node{\footnotesize{$-$}};
\draw (-.3,2.45) node{\footnotesize{$-$}};
\draw (.3,2.45) node{\footnotesize{$+$}};

\fill[white](2,5) circle (.7cm);
\draw(2,5) circle (.7cm);
\draw (2,5) node{\reflectbox{$R_2$}};
\draw (1.7,4.55) node{\footnotesize{$+$}};
\draw (2.3,4.55) node{\footnotesize{$-$}};
\draw (1.8,5.5) node{\footnotesize{$-$}};
\draw (2.2,5.5) node{\footnotesize{$+$}};

\fill[white](2,7) circle (.7cm);
\draw (2,7) circle (.7cm);
\draw (2,7) node{$R_1$};
\draw (1.8,6.5) node{\footnotesize{$+$}};
\draw (2.2,6.5) node{\footnotesize{$-$}};
\draw (1.55,6.7) node{\footnotesize{$-$}};
\draw (1.55,7.3) node{\footnotesize{$+$}};
\end{scope}
%----------------------------------------------------%
\begin{scope}[xshift = 14.5cm]
\draw (-.3,0) -- (-.3,2);
\draw (.3,0) -- (.3,2);
\draw (2.3,5) -- (2.3,7);
\draw (1.7,5) -- (1.7,7);

\begin{scope}[rounded corners = 3mm]

\draw (.3,2) -- (.3,6.7) -- (2,6.7);
\draw(1.7,5) -- (1.7,0) -- (0,0);
\draw (-.3,2) -- (-.3,3.5) -- (-1.6,3.5) -- (-1.6,8.2) -- (3.2,8.2) -- (3.2,4.3);
\draw(2.3,5) -- (2.3,4.3);
\draw (2.3,4.3) arc (180:360:.45);

\draw (0,0) -- (-1.2,0) --(-1.2,3.3);
\draw (-1.2,3.7) -- (-1.2,7.3) -- (2,7.3);

\end{scope}

\fill[white](0,0) circle (.7cm);
\draw (0,0) circle (.7cm);
\draw (0,0) node{$R_4$};
\draw (-.5,0) node{\footnotesize{$+$}};
\draw (.5,0) node{\footnotesize{$-$}};
\draw (-.2,.5) node{\footnotesize{$-$}};
\draw (.2,.5) node{\footnotesize{$+$}};

\fill[white](0,2) circle (.7cm);
\draw (0,2) circle (.7cm);
\draw (0,2) node{\reflectbox{$R_3$}};
\draw (-.2,1.5) node{\footnotesize{$+$}};
\draw (.2,1.5) node{\footnotesize{$-$}};
\draw (-.3,2.45) node{\footnotesize{$-$}};
\draw (.3,2.45) node{\footnotesize{$+$}};

\fill[white](2,5) circle (.7cm);
\draw(2,5) circle (.7cm);
\draw (2,5) node{\reflectbox{$R_2$}};
\draw (1.7,4.55) node{\footnotesize{$+$}};
\draw (2.3,4.55) node{\footnotesize{$-$}};
\draw (1.8,5.5) node{\footnotesize{$-$}};
\draw (2.2,5.5) node{\footnotesize{$+$}};

\fill[white](2,7) circle (.7cm);
\draw (2,7) circle (.7cm);
\draw (2,7) node{$R_1$};
\draw (1.8,6.5) node{\footnotesize{$+$}};
\draw (2.2,6.5) node{\footnotesize{$-$}};
\draw (1.55,6.7) node{\footnotesize{$-$}};
\draw (1.55,7.3) node{\footnotesize{$+$}};
\end{scope}

\end{tikzpicture}$$
\end{adjustbox}
\caption{An isotopy between $D$ and an alternating diagram when $\operatorname{adj}(u_1,u_2)=2$ and $\operatorname{adj}(v_1,v_2)=1$. In step 1, the strand between $R_2$ and $R_3$ is pulled beneath the diagram. In step 2, two flypes are performed, and finally in step 3, the strand between $R_2$ and $R_3$ is pulled above the rest of the diagram.}
\label{figure:altiso2}
\end{figure}
\end{proof}

\begin{lemma}
\label{lemma:split}
If $D$ is an almost alternating diagram of a link $L$ such that $\operatorname{adj}(u_1,u_2)=\operatorname{adj}(v_1,v_2)=2$, then $L$ is the disjoint union of an alternating link and an unknot.
\end{lemma}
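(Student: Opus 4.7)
The approach mirrors that of Lemma \ref{lemma:alt}. Let $G$ be the checkerboard graph of $D$ containing $u_1, u_2$, and $\overline{G}$ its planar dual, containing $v_1, v_2$. Since $\operatorname{adj}(u_1,u_2)=2$, label the edges of the two length-$2$ paths in $G'$ between $u_1$ and $u_2$ as $\{e_1,e_2\}$ and $\{e_3,e_4\}$, with $e_1,e_3$ incident to $u_1$ and $e_2,e_4$ incident to $u_2$. As observed in the proof of Lemma \ref{lemma:alt}, each length-$2$ path in $\overline{G}'$ between $v_1$ and $v_2$ must use exactly one edge dual to a member of $\{e_1,e_2\}$ and one edge dual to a member of $\{e_3,e_4\}$. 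Since $\operatorname{adj}(v_1,v_2)=2$ and the two paths in $\overline{G}'$ must be internally vertex-disjoint, the four edges of these paths are precisely the duals of $e_1,e_2,e_3,e_4$. Up to the symmetries $u_1\leftrightarrow u_2$, $v_1\leftrightarrow v_2$, and swapping the two paths, this leaves a small number of combinatorial configurations for how the $e_i$ are paired into the paths in $\overline{G}'$.

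For each configuration I would draw the resulting diagram. The four crossings $e_1,\dots,e_4$, together with the dealternator and the arcs of $R$ joining them, lie in a disk neighborhood of $u_1\cup u_2\cup v_1\cup v_2$ inside $D$, while the rest of the alternating tangle $R$ decomposes into a few alternating subtangles $R_1,R_2,\dots$ analogous to those in Figures \ref{figure:altiso1} and \ref{figure:altiso2}. The structural difference from Lemma \ref{lemma:alt} is that the extra path in $\overline{G}'$ forces two additional crossings in parallel with the two already present, so that one closed strand of $D$ bounds a disk meeting the remainder of $D$ in no crossings. Applying the same flypes and planar isotopies used in Lemma \ref{lemma:alt} to the $R_i$ and then pulling this strand off, $D$ is exhibited as a disjoint union of a diagram $D'$ and a trivial circle $U$.

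The final step is to verify that $D'$ is alternating. In each case the isotopy that separates $U$ collapses two of the $e_i$-crossings against the dealternator and two against each other, and the resulting residue diagram is precisely of the form produced by the flype argument in Lemma \ref{lemma:alt}. The main obstacle is simply the case analysis: one must carry out each of the (few) configurations and check both that a split unknot circle emerges and that what remains is alternating. No Khovanov homology input is needed — the argument is purely a diagrammatic extension of Lemma \ref{lemma:alt}.
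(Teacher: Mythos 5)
Your proposal is correct and follows essentially the same route as the paper: the paper likewise uses duality of the checkerboard graphs to conclude that the four edges of the two $u_1$--$u_2$ paths are exactly dual to the four edges of the two $v_1$--$v_2$ paths, which forces $D$ into the single configuration (up to symmetry) shown on the left of Figure \ref{figure:split}, and then exhibits the explicit isotopy splitting off an unknot and leaving an alternating diagram. The only difference is presentational: where you propose a short case analysis over the possible pairings of the dual edges, the paper asserts the unique resulting diagram directly and carries out the isotopy in the figure.
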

\begin{proof}
Let $D$ be an almost alternating diagram of $L$ satisfying conditions (1) and (2) in Definition \ref{definition:ABalmostalternating} and also such that $\operatorname{adj}(u_1,u_2)=\operatorname{adj}(v_1,v_2)=2$. The checkerboard graph $G$ has two paths between $u_1$ and $u_2$, and the checkerboard graph $\overline{G}$ has two paths between $v_1$ and $v_2$. Moreover, the edges in the paths between $u_1$ and $u_2$ are dual to the edges in the paths between $v_1$ and $v_2$. 

Hence the diagram $D$ is the leftmost picture in Figure \ref{figure:split} where $R$ is an alternating tangle. Figure \ref{figure:split} shows an isotopy between $D$ and the disjoint union of an unknot and an alternating diagram. 
\begin{figure}[h]
$$\begin{tikzpicture}[thick]

\begin{scope}[rounded corners = 3mm]
\draw(3,2.3) -- (2,2.3);
\draw[rounded corners = 2mm] (1.6,2.3) -- (1.4,2.3) -- (1.3,2.2);
\draw (3,1.7) -- (1.4,1.7) -- (.8,2.3) -- (.8,3.7) -- (4.7,3.7) -- (4.7,2.3) -- (3,2.3); 
\draw (1.8,1.9) -- (1.8,3.2) -- (4.2,3.2) -- (4.2,2.5);
\draw(1.8,1.5) -- (1.8,.8) -- (4.2,.8) -- (4.2,2.1);
\draw[rounded corners = 2mm] (.9,1.8) -- (.8,1.7) -- (.8,.3) -- (4.7,.3) -- (4.7,1.7) -- (4.4,1.7);
\draw (3,1.7) -- (4,1.7);

\end{scope}

\fill[white] (3,2) circle (.7);
\draw (3,2) circle (.7);
\draw (3,2) node{$R$};
\draw (2.55,2.3) node{\footnotesize{$+$}};
\draw (2.55,1.7) node{\footnotesize{$-$}};
\draw (3.45,2.3) node{\footnotesize{$-$}};
\draw (3.45,1.7) node{\footnotesize{$+$}};

%---------------------------------------------%
\begin{scope}[xshift = 5cm]

\begin{scope}[rounded corners = 3mm]
\draw (3,2.3) -- (2,2.3);
\draw(1.65,2.3) -- (1.45,2.3);
\draw (1.1,2.3) -- (.8,2.3) -- (.8,.3) -- (4.7,.3) -- (4.7,.6);
\draw(1.3,2.3) -- (1.3,.8) -- (5.2,.8) -- (5.2,3.7) -- (1.3,3.7) -- (1.3,2.3);
\draw (4.7,1) -- (4.7,1.7) -- (3,1.7);
\draw (3,1.7) -- (1.8,1.7) -- (1.8,3.2) -- (4.2,3.2) -- (4.2,2.3) -- (3,2.3);

\end{scope}

\fill[white] (3,2) circle (.7);
\draw (3,2) circle (.7);
\draw (3,2) node{$R$};
\draw (2.55,2.3) node{\footnotesize{$+$}};
\draw (2.55,1.7) node{\footnotesize{$-$}};
\draw (3.45,2.3) node{\footnotesize{$-$}};
\draw (3.45,1.7) node{\footnotesize{$+$}};

\end{scope}
%-------------------------------------------------%

%---------------------------------------------%
\begin{scope}[xshift = 10.5cm]

\begin{scope}[rounded corners = 3mm]

\draw[rounded corners = 2mm] (3,2.3) -- (2.2,2.3) -- (2.1,2.2);
\draw[rounded corners = 2mm] (1.7,1.8) -- (1.6,1.7) -- (1.6,.8) -- (4.2,.8) -- (4.2,1.7) -- (3,1.7);
\draw (3,1.7) -- (2.2,1.7) -- (1.6,2.3) -- (1.6,3.2) -- (4.2,3.2) -- (4.2,2.3) -- (3,2.3);
\draw (.9,2) -- (.9,.3) -- (4.7,.3) -- (4.7,3.7) -- (.9,3.7) -- (.9,2);

\end{scope}

\fill[white] (3,2) circle (.7);
\draw (3,2) circle (.7);
\draw (3,2) node{$R$};
\draw (2.55,2.3) node{\footnotesize{$+$}};
\draw (2.55,1.7) node{\footnotesize{$-$}};
\draw (3.45,2.3) node{\footnotesize{$-$}};
\draw (3.45,1.7) node{\footnotesize{$+$}};

\end{scope}
%-------------------------------------------------%

\end{tikzpicture}$$
\caption{On the left is the almost alternating diagram $D$ when $\operatorname{adj}(u_1,u_2)=\operatorname{adj}(v_1,v_2)=2$. The isotopy described by the pictures shows that $L$ is the disjoint union of an unknot and an alternating link.}
\label{figure:split}
\end{figure}
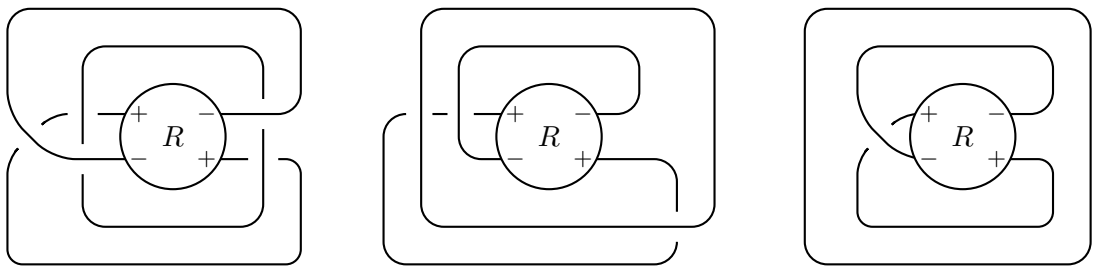
\end{proof}

The previous two lemmas combine to give the following theorem.
\begin{theorem}
\label{theorem:ABaa}
Every almost alternating link is $A$-almost alternating or $B$-almost alternating. 
\end{theorem}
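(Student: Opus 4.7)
The plan is to assume toward contradiction that $D$ is an almost alternating diagram of an almost alternating link $L$ that fails both conditions (3A) and (3B). Since $L$ is non-alternating, conditions (1) and (2) of Definition \ref{definition:ABalmostalternating} must already hold for $D$ (otherwise $L$ would be alternating, as in Figure \ref{figure:Reducible}). Failure of (3A) produces a white region $w$ of $R$ that is a common neighbor of $u_1$ and $u_2$ in $G'$, so $\operatorname{adj}(u_1,u_2)\geq 1$, and failure of (3B) symmetrically gives a black region $b$ in $R$ with $\operatorname{adj}(v_1,v_2)\geq 1$.

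The key step is a planarity bound: under these hypotheses, $\operatorname{adj}(u_1,u_2)\leq 2$ and $\operatorname{adj}(v_1,v_2)\leq 2$. Any common white neighbor $w$ of $u_1,u_2$ yields a $3$-cycle in $G$, consisting of the dealternator edge together with the two edges realizing the path $u_1$-$w$-$u_2$. Because the dealternator edge locally separates the faces $v_1$ and $v_2$ of $G$, this $3$-cycle partitions $S^2$ into two disks with $v_1$ and $v_2$ in different components. The dual $3$-cycle through $b$ symmetrically separates $u_1$ from $u_2$. A third common white neighbor $w''$ of $u_1, u_2$ would lie in one of the two disks cut out by the cycle through $b$, and chasing its two edges to $u_1$ and $u_2$ using planarity and the alternating structure of $R$ forces a crossing of $R$ to lie in the common boundary of $u_1$ and $u_2$, violating condition (2). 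A dual argument bounds $\operatorname{adj}(v_1,v_2)$.

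Having $\operatorname{adj}(u_1,u_2),\operatorname{adj}(v_1,v_2)\in\{1,2\}$, I would finish with a four-way case analysis. Cases $(2,1)$ and $(1,2)$ are covered by Lemma \ref{lemma:alt}, which gives that $L$ is alternating and hence a contradiction. Case $(2,2)$ is covered by Lemma \ref{lemma:split}, which gives $L$ as the split union of an alternating link and an unknot; such a split link admits an alternating diagram (place the two components side by side), again contradicting that $L$ is non-alternating. In the remaining case $(1,1)$, the unique common neighbors $w$ and $b$ are arranged in a dual configuration, and a flype on the subtangle cut out by the $3$-cycle through $w$ followed by a Reidemeister $2$ move at the dealternator produces an alternating diagram of $L$, in the spirit of Figure \ref{figure:altiso1}.

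I expect the main obstacle to be the planarity bound $\operatorname{adj}\leq 2$: it is the only step where the two $3$-cycles, through $w$ in $G$ and through $b$ in $\overline{G}$, must be exploited simultaneously. The four-way conclusion is then a bookkeeping exercise reducing to the preceding lemmas, with the $(1,1)$ case handled by an explicit isotopy argument whose correctness follows once the relative positions of $w, b$, the four crossings, and the dealternator are pinned down by planarity.
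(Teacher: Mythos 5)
Your overall strategy coincides with the paper's: both adjacency counts are bounded by $2$ via planar duality once both are assumed positive, and the cases $(2,1)$, $(1,2)$, $(2,2)$ are then killed by Lemmas \ref{lemma:alt} and \ref{lemma:split}. For the bound itself, the separating-cycle idea is the right one, but the contradiction you should extract from a third common white neighbor is a pigeonhole count, not a violation of condition (2): the cycle $v_1$--$b$--$v_2$--dealternator has only two non-dealternator edges, each length-two path $u_1$--$w_i$--$u_2$ must use an edge dual to one of them, and these dual edges are distinct for distinct $w_i$, so there are at most two such $w_i$.

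The genuine gap is the $(1,1)$ case. It is false that $\operatorname{adj}(u_1,u_2)=\operatorname{adj}(v_1,v_2)=1$ forces $L$ to be alternating. The flype-plus-Reidemeister-2 move here (Figure \ref{figure:AAReduce}) cancels two crossings of $R$ against each other; the dealternator survives, and the output is an \emph{almost alternating} diagram of $L$ with two fewer crossings, not an alternating one. (Note that under condition (2) the dealternator bounds no bigon, so no Reidemeister 2 move ``at the dealternator'' can remove it.) Concretely, the left-hand diagram of Figure \ref{figure:AAReduce} with generic alternating tangles $R_1,R_2,R_3$ is a $(1,1)$ configuration whose underlying link is non-alternating, so the contradiction you seek does not exist. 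This also undermines your global framing: one cannot argue by contradiction from ``$D$ fails both (3A) and (3B),'' since such diagrams do occur for genuinely almost alternating links --- the theorem only asserts that \emph{some} diagram of $L$ satisfies (3A) or (3B). The repair is the paper's: work with a crossing-minimal almost alternating diagram (or induct on crossing number), use the $(1,1)$ move to pass to a strictly smaller almost alternating diagram whenever it applies, and run your four-way analysis only on the terminal diagram, where the remaining three nonzero cases contradict non-alternation and hence one of the adjacency counts must vanish.
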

\begin{proof}
Since $G$ and $\overline{G}$ are dual graphs, if $\operatorname{adj}(u_1,u_2) > 2$, then $\operatorname{adj}(v_1,v_2)=0$; if $\operatorname{adj}(v_1,v_2)>2$, then $\operatorname{adj}(u_1,u_2)=0$. If $\operatorname{adj}(u_1,u_2)=\operatorname{adj}(v_1,v_2)=1$, then the isotopy in Figure \ref{figure:AAReduce} yields an almost alternating diagram of the link with two fewer crossings (as also shown in \cite[Theorem 1.3]{DasLow:TuraevJones} and \cite[Theorem 3.3]{LS:AAJones}). Hence the only possibilities remaining where neither $\operatorname{adj}(u_1,u_2)$ or $\operatorname{adj}(v_1,v_2)$ are zero are
\begin{enumerate}
\item $\operatorname{adj}(u_1,u_2)=2$ and $\operatorname{adj}(v_1,v_2)=1$,
\item $\operatorname{adj}(u_1,u_2)=1$ and $\operatorname{adj}(v_1,v_2)=2$, or
\item $\operatorname{adj}(u_1,u_2) = \operatorname{adj}(v_1,v_2)=2$.
\end{enumerate}
Lemmas \ref{lemma:alt} and \ref{lemma:split} show that in each of the three above cases, the link is alternating. Thus either $\operatorname{adj}(u_1,u_2)=0$ or $\operatorname{adj}(v_1,v_2)=0$, and hence $D$ is $A$-almost alternating or $B$-almost alternating.

\end{proof}
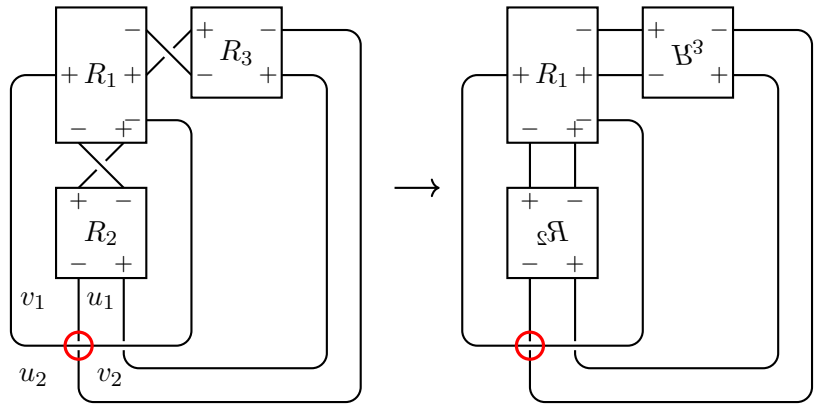
\begin{figure}[h]
$$\begin{tikzpicture}[thick, scale = .6]
\draw (0,0) rectangle (2,2);
\draw (0,3) rectangle (2,6);
\draw (3,4) rectangle (5,6);

\draw (1,4.5) node{$R_1$};
\draw (1,1) node{$R_2$};
\draw (4,5) node{$R_3$};

\draw (.3,4.5) node{\footnotesize{$+$}};
\draw (.5,3.3) node{\footnotesize{$-$}};
\draw (1.5,3.3) node{\footnotesize{$+$}};
\draw (1.7,3.5) node{\footnotesize{$-$}};
\draw (1.7,4.5) node{\footnotesize{$+$}};
\draw (1.7,5.5) node{\footnotesize{$-$}};

\draw (.5,1.7) node{\footnotesize{$+$}};
\draw (1.5,1.7) node{\footnotesize{$-$}};
\draw (.5,.3) node{\footnotesize{$-$}};
\draw (1.5,.3) node{\footnotesize{$+$}};

\draw (3.3,4.5) node{\footnotesize{$-$}};
\draw (3.3,5.5) node{\footnotesize{$+$}};
\draw (4.7,4.5) node{\footnotesize{$+$}};
\draw (4.7,5.5) node{\footnotesize{$-$}};

\draw (.5,3) -- (1.5,2);
\draw (.5,2) -- (.9,2.4);
\draw (1.1,2.6) -- (1.5,3);
\draw (2,5.5) -- (3,4.5);
\draw (2,4.5) -- (2.4,4.9);
\draw (2.6,5.1) -- (3,5.5);
\draw[rounded corners = 2mm] (2,3.5) -- (3,3.5) -- (3,-1.5) -- (-1,-1.5) -- (-1,4.5) -- (0,4.5);
\draw (1.5,0) -- (1.5,-1.4);
\draw[rounded corners = 2mm] (1.5,-1.6) -- (1.5,-2) -- (6,-2) -- (6,4.5) -- (5,4.5);
\draw (.5,0) -- (.5,-1.4);
\draw[rounded corners = 2mm] (.5,-1.6) -- (.5,-2.75) -- (6.75,-2.75) -- (6.75,5.5) -- (5,5.5);

\draw (-.5,-.5) node{$v_1$};
\draw (1.2,-2.2) node{$v_2$};
\draw (1,-.5) node{$u_1$};
\draw (-.5,-2.2) node{$u_2$};

\draw[red,very thick] (.5,-1.5) circle (.3cm);

% 2 fewer crossings

\begin{scope}[xshift = 10cm]

\draw (0,0) rectangle (2,2);
\draw (0,3) rectangle (2,6);
\draw (3,4) rectangle (5,6);

\draw (1,4.5) node{$R_1$};
\draw (1,1) node{\reflectbox{$R_2$}};
\draw (4,5) node{\raisebox{\depth}{\scalebox{1}[-1]{$R_3$}}};
\draw (.3,4.5) node{\footnotesize{$+$}};
\draw (.5,3.3) node{\footnotesize{$-$}};
\draw (1.5,3.3) node{\footnotesize{$+$}};
\draw (1.7,3.5) node{\footnotesize{$-$}};
\draw (1.7,4.5) node{\footnotesize{$+$}};
\draw (1.7,5.5) node{\footnotesize{$-$}};

\draw (.5,1.7) node{\footnotesize{$+$}};
\draw (1.5,1.7) node{\footnotesize{$-$}};
\draw (.5,.3) node{\footnotesize{$-$}};
\draw (1.5,.3) node{\footnotesize{$+$}};

\draw (3.3,4.5) node{\footnotesize{$-$}};
\draw (3.3,5.5) node{\footnotesize{$+$}};
\draw (4.7,4.5) node{\footnotesize{$+$}};
\draw (4.7,5.5) node{\footnotesize{$-$}};

\draw (.5,3) -- (.5,2);
\draw (1.5,2) -- (1.5,3);

\draw (2,5.5) -- (3,5.5);
\draw (2,4.5) -- (3,4.5);
\draw[rounded corners = 2mm] (2,3.5) -- (3,3.5) -- (3,-1.5) -- (-1,-1.5) -- (-1,4.5) -- (0,4.5);
\draw (1.5,0) -- (1.5,-1.4);
\draw[rounded corners = 1mm] (.5,0) -- (.5,-1.4);
\draw[rounded corners = 2mm] (1.5,-1.6) -- (1.5,-2) -- (6,-2) -- (6,4.5) -- (5,4.5);
\draw (.5,0) -- (.5,-1.4);
\draw[rounded corners = 2mm] (.5,-1.6) -- (.5,-2.75) -- (6.75,-2.75) -- (6.75,5.5) -- (5,5.5);

\draw[red,very thick] (.5,-1.5) circle (.3cm);

\end{scope}

\draw[ thick, ->] (7.5,2) -- (8.5,2);

\end{tikzpicture}$$
\caption{If $\operatorname{adj}(u_1,u_2)=\operatorname{adj}(v_1,v_2)=1$, then the link has an almost alternating diagram with two fewer crossings.}
\label{figure:AAReduce}
\end{figure}

Armond and Lowrance \cite{ArmLow:Turaev} and independently Kim \cite{Kim:TuraevClassification} proved that every non-split Turaev genus one link has a diagram as in Figure \ref{figure:tg1}. Each tangle $R_i$ in Figure \ref{figure:tg1} is an alternating tangle both of whose closures are connected link diagrams. Kim used this classification to prove the following theorem, whose proof we sketch. 
\begin{figure}[h]
$$\begin{tikzpicture}[thick, scale = .8]
\draw [bend left] (0,.5) edge (3,.5);
\draw [bend right] (0,-.5) edge (3, -.5);
\draw [bend left] (3,.5) edge (6,.5);
\draw [bend right] (3,-.5) edge (6, -.5);
\draw [bend left] (6,.5) edge (9,.5);
\draw [bend right] (6,-.5) edge (9, -.5);
\draw [bend left] (7.5,.5) edge (10.5,.5);
\draw [bend right] (7.5,-.5) edge (10.5,-.5);

\fill[white] (7.5,1) rectangle (9.1,-1);
\draw (8.25,0) node{\Large{$\dots$}};

\draw (-.8,.6) arc (270:90:.8cm);
\draw (-.8,-.6) arc (90:270:.8cm);
\draw (11.3,.6) arc (-90:90:.8cm);
\draw (11.3,-.6) arc (90:-90:.8cm);
\draw (-.8,2.2) -- (11.3,2.2);
\draw (-.8,-2.2) -- (11.3,-2.2);

\fill[white] (0,0) circle (1cm);
\draw (0,0) node {$R_1$};
\draw (0,0) circle (1cm);
\fill[white] (3,0) circle (1cm);
\draw (3,0) node {$R_2$};
\draw (3,0) circle (1cm);
\fill[white] (6,0) circle (1cm);
\draw (6,0) node {$R_3$};
\draw (6,0) circle (1cm);
\fill[white] (10.5,0) circle (1cm);
\draw (10.5,0) node {$R_{2k}$};
\draw (10.5,0) circle (1cm);

\draw (-.5,.6) node{$-$};
\draw (-.5,-.6) node{$+$};
\draw (.4,.7) node{$+$};
\draw (.4,-.7) node{$-$};

\draw (2.6,.7) node{$+$};
\draw (2.6,-.7) node{$-$};
\draw (3.4, .7) node{$-$};
\draw (3.4,-.7) node{$+$};

\draw (5.6, .7) node{$-$};
\draw (5.6,-.7) node{$+$};
\draw (6.4,.7) node{$+$};
\draw (6.4,-.7) node{$-$};

\draw (10.1, .7) node{$+$};
\draw (10.1,-.7) node{$-$};
\draw (11,.6) node{$-$};
\draw (11,-.6) node{$+$};

\end{tikzpicture}$$
\caption{Every non-split link of Turaev genus one has a diagram as above. Each $R_i$ is an alternating tangle whose closures are connected link diagrams.}
\label{figure:tg1}
\end{figure}
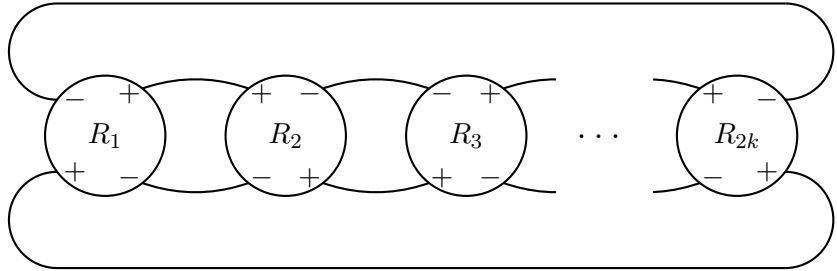

\begin{theorem}[Kim]
\label{theorem:TuraevInadequate}
Every Turaev genus one link has a diagram that is either $A$-adequate, $B$-adequate, or almost alternating.
\end{theorem}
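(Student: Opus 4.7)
The plan is to begin with the classification of non-split Turaev genus one diagrams recorded in Figure \ref{figure:tg1}: any such link has a diagram $D$ consisting of a cyclic chain of $2k$ alternating tangles $R_1,\ldots,R_{2k}$ glued along boundary strands, with the fixed alternating $\pm$ pattern at each interface. The goal is to show that either $D$ is already $A$-adequate or $B$-adequate, or else after some flypes and Reidemeister moves the link admits an almost alternating diagram.

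First, I would study the all-$A$ and all-$B$ Kauffman states of $D$ tangle by tangle. Since each $R_i$ is alternating, the $A$-resolution at any crossing $X$ inside $R_i$ separates the two local arcs into opposite shaded regions of the checkerboard coloring of $R_i$. Hence an $A$-inadequacy at $X$ is possible only if these two local regions are joined into a single all-$A$ component through the remaining tangles in the cyclic chain; a parallel statement holds for $B$-inadequacy. This reduces adequacy to a combinatorial question about how the circles of the all-$A$ and all-$B$ states thread through the $2k$ interfaces, governed by the prescribed sign pattern.

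Next I would do a case analysis on $k$ and on the crossing numbers $c(R_i)$. When each $R_i$ is ``rich enough'' — so that its internal checkerboard structure, together with the interface signs, keeps the relevant arcs on distinct circles of the all-$A$ or all-$B$ state — one of the two adequacy conditions can be read off directly from the cyclic gluing. The residual degenerate configurations are those in which some $R_i$ contributes too few circles, for instance when $R_i$ is a single crossing or when its checkerboard graph admits a cut that collapses two adjacent boundary arcs. In each such case I would exhibit an explicit sequence of flypes that redistributes crossings among neighboring tangles, after which all crossings but one get absorbed into a single alternating tangle while the remaining crossing plays the role of a dealternator; this is an almost alternating diagram of the link. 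The already-proved reductions in Lemmas \ref{lemma:alt} and \ref{lemma:split} and in Theorem \ref{theorem:ABaa} provide the templates for these local simplifications.

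The main obstacle is exhaustively enumerating the degenerate configurations — that is, the diagrams in Figure \ref{figure:tg1} which are simultaneously $A$-inadequate and $B$-inadequate — and ruling out the possibility that any of them resists all diagrammatic simplifications to an almost alternating form. Bookkeeping the interfacial signs is delicate, and the small-$k$ (especially $k=1$) and small-$c(R_i)$ subcases must each be handled individually, since this is where the global cyclic structure is most likely to force both adequacy conditions to fail. Once every such configuration is seen to admit a flype/Reidemeister reduction to an almost alternating diagram, the proof is complete.
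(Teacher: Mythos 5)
Your starting point (the classification of Turaev genus one diagrams in Figure \ref{figure:tg1}) and your target trichotomy are the same as the paper's, but the engine of the actual proof is missing. The paper's argument (following Kim) hinges on a precise identification of \emph{which} crossings can obstruct adequacy: letting $w_1$ and $w_2$ be the two regions that meet every tangle $R_i$ in the cyclic decomposition, the only problematic crossings are the ``$AB$-loop'' crossings lying in the boundary of both $w_1$ and $w_2$. A sequence of flypes collects all of these into a single twist region, and Reidemeister 2 moves make the crossings in that twist region all of one sign. After this normalization the case analysis is \emph{finite}: zero or at least two crossings in the twist region forces $A$- or $B$-adequacy; one crossing together with more than one other alternating tangle again forces adequacy; and one crossing with exactly one other tangle is precisely the almost alternating picture of Figure \ref{figure:aadiagram}.

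Your proposal replaces this with an open-ended enumeration of ``degenerate configurations'' over all $k$ and all tangle complexities, and you yourself flag that enumeration as the unresolved main obstacle. That obstacle is not a bookkeeping nuisance --- it is the content of the theorem. Without the localization of inadequacy to the $AB$-loop crossings and the flype normalization into one twist region, there is no bound on the configurations you would need to check, and your observation that an $A$-inadequacy at a crossing of $R_i$ requires its two local regions to be joined through the other tangles, while correct, is not by itself enough to reduce the problem to finitely many cases. So as written the proposal is a plan whose hardest step is left open; you need the $AB$-loop crossing concept (or an equivalent structural lemma) to close it.
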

\begin{proof}[Sketch of proof]
Let $D$ be a Turaev genus one diagram as in Figure \ref{figure:tg1}, and let $w_1$ and $w_2$ be the two regions that meet all of the alternating tangles $R_i$ for $i=1,\dots, 2k$. Kim calls any crossing in the boundary of both $w_1$ and $w_2$ an $AB$-loop crossing because it is precisely these crossing that will lead to the diagram being not $A$-adequate or not $B$-adequate. Performing a sequence of flypes can collect all of the $AB$-loop crossings into one twist region, and performing Reidemeister 2 moves in the twist region can ensure each crossing in the twist region has the same sign. 

Upon completing this process, if there are either zero or more than two crossings in the twist region, then the diagram is $A$-adequate or $B$-adequate. If there is exactly one crossing in the twist region and the total number of alternating tangles in the decomposition (other than the twist region) is greater than one, then the diagram is again either $A$-adequate or $B$-adequate. Finally, if there is exactly one crossing in the twist region and exactly one other alternating tangle in the decomposition, then the diagram looks like Figure \ref{figure:aadiagram}, and hence is almost alternating.
\end{proof}
Figure \ref{figure:TReduce} shows the algorithm described in the previous sketch. In the initial diagram, tangles of the form $R_i^j$ combine to make up the tangle $R_i$ from Figure \ref{figure:tg1}. The resulting link diagram has a twist region with two crossing plus four additional alternating tangles, and the final diagram is $B$-adequate.

\begin{figure}[h]
$$\begin{tikzpicture}[thick, scale = .75]

\draw (8,1.5) node{$w_1$};
\draw (8,-1.5) node{$w_2$};

\draw (12,.5) -- (14,.5);
\draw (12,-.5) -- (14,-.5);

\draw (16,.5) arc (-90:90:1);
\draw (0,.5) arc (-90:-270:1);
\draw (0,2.5) -- (16,2.5);

\draw (16,-.5) arc (90:-90:1);
\draw (0,-.5) arc (90:270:1);
\draw (0,-2.5) -- (16,-2.5);

\draw (0,-1) rectangle (1,1);
\draw (2,-1) rectangle (3,1);

\draw (4.5,-1) rectangle (5.5,1);
\draw (6.5,-1) rectangle (7.5,1);

\draw (9,-1) rectangle (10,1);
\draw (11,-1) rectangle (12,1);

\fill[white] (15,0) circle (1.5cm);
\draw (15,0) circle (1.5cm);

\draw (3,.5) -- (4.5,.5);
\draw (3,-.5) -- (4.5,-.5);
\draw (7.5,.5) -- (9,.5);
\draw (7.5,-.5) -- (9,-.5);

\draw (.5,0) node{\small{$R_1^1$}};
\draw (2.5,0) node{\small{$R_1^2$}};
\draw (5,0) node{\small{$R_2^1$}};
\draw (7,0) node{\small{$R_2^2$}};
\draw (9.5,0) node{\small{$R_3^1$}};
\draw (11.5,0) node{\small{$R_3^2$}};
\draw (15,0) node{\small{$R_4$}};

\draw (.25,.5) node{\tiny{$-$}};
\draw (.75,.5) node{\tiny{$+$}};
\draw (.25,-.5) node{\tiny{$+$}};
\draw (.75,-.5) node{\tiny{$-$}};

\begin{scope}[xshift = 2cm]
\draw (.25,.5) node{\tiny{$-$}};
\draw (.75,.5) node{\tiny{$+$}};
\draw (.25,-.5) node{\tiny{$+$}};
\draw (.75,-.5) node{\tiny{$-$}};
\end{scope}

\begin{scope}[xshift = 9cm]
\draw (.25,.5) node{\tiny{$-$}};
\draw (.75,.5) node{\tiny{$+$}};
\draw (.25,-.5) node{\tiny{$+$}};
\draw (.75,-.5) node{\tiny{$-$}};

\begin{scope}[xshift = 2cm]
\draw (.25,.5) node{\tiny{$-$}};
\draw (.75,.5) node{\tiny{$+$}};
\draw (.25,-.5) node{\tiny{$+$}};
\draw (.75,-.5) node{\tiny{$-$}};
\end{scope}

\end{scope}

\begin{scope}[xshift = 4.5cm]
\draw (.25,.5) node{\tiny{$+$}};
\draw (.75,.5) node{\tiny{$-$}};
\draw (.25,-.5) node{\tiny{$-$}};
\draw (.75,-.5) node{\tiny{$+$}};

\begin{scope}[xshift = 2cm]
\draw (.25,.5) node{\tiny{$+$}};
\draw (.75,.5) node{\tiny{$-$}};
\draw (.25,-.5) node{\tiny{$-$}};
\draw (.75,-.5) node{\tiny{$+$}};
\end{scope}

\end{scope}

\draw (14,.5) node{\tiny{$+$}};
\draw (14,-.5) node{\tiny{$-$}};
\draw (16,.5) node{\tiny{$-$}};
\draw (16,-.5) node{\tiny{$+$}};

\begin{knot}[
	%draft mode = crossings,
	consider self intersections,
 	clip width = 3,
 	ignore endpoint intersections = true,
	end tolerance = 2pt
	]
	\flipcrossings{1,4};
	\strand (1,.5) to [out = 0, in =180]
	(1.5,-.25) to [out = 0, in = 180]
	(2,.5);
	
	\strand (1,-.5) to [out = 0 , in =180]
	(1.5,.25) to [out = 0, in=180]
	(2,-.5);
	\strand (5.5,.5) to [out =0, in =180] (6.5,-.5);
	\strand (5.5,-.5) to [out = 0, in = 180] (6.5,.5);
	
	\strand (10,.5) to [out =0, in =180] (11,-.5);
	\strand (10,-.5) to [out = 0, in = 180] (11,.5);
	
\end{knot}
% Second picture
%--------------------------------------------------------%
\begin{scope} [yshift = -6.5cm]

\draw (12,.5) -- (14,.5);
\draw (12,-.5) -- (14,-.5);

\draw (16,.5) arc (-90:90:1);
\draw (0,.5) arc (-90:-270:1);
\draw (0,2.5) -- (16,2.5);

\draw (16,-.5) arc (90:-90:1);
\draw (0,-.5) arc (90:270:1);
\draw (0,-2.5) -- (16,-2.5);

\begin{scope}[xshift = .5cm]
\draw (3.5,-1) rectangle (4.5,1);
\draw (2,-1) rectangle (3,1);

\draw (5.5,-1) rectangle (6.5,1);
\draw (7,-1) rectangle (8,1);

\draw (9,-1) rectangle (10,1);
\end{scope}
\draw (11,-1) rectangle (12,1);

\fill[white] (15,0) circle (1.5cm);
\draw (15,0) circle (1.5cm);
\begin{scope}[xshift = .5cm]
\draw (3,.5) -- (3.5,.5);
\draw (3,-.5) -- (3.5,-.5);
\draw (4.5,.5) -- (5.5,.5);
\draw (4.5,-.5) -- (5.5,-.5);
\draw (6.5,.5) -- (7,.5);
\draw (6.5,-.5) -- (7,-.5);
\draw (8,.5) -- (9,.5);
\draw (8,-.5) -- (9,-.5);
\end{scope}
\draw (10.5,.5) -- (11,.5);
\draw (10.5,-.5) -- (11,-.5);

\begin{scope}[xshift = .5cm]
\draw (2.5,0) node{\small{$R_1^1$}};
\draw (4,0) node{\small{$R_1^2$}};
\draw (6,0) node{\small{$R_2^1$}};
\draw (7.5,0) node{\raisebox{\depth}{\scalebox{1}[-1]{$R_2^2$}}};
\draw (9.5,0) node{\raisebox{\depth}{\scalebox{1}[-1]{$R_3^1$}}};
\end{scope}
\draw (11.5,0) node{\small{$R_3^2$}};
\draw (15,0) node{\small{$R_4$}};

\begin{scope}[xshift = 4cm]
\draw (.25,.5) node{\tiny{$-$}};
\draw (.75,.5) node{\tiny{$+$}};
\draw (.25,-.5) node{\tiny{$+$}};
\draw (.75,-.5) node{\tiny{$-$}};
\end{scope}

\begin{scope}[xshift = 2.5cm]
\draw (.25,.5) node{\tiny{$-$}};
\draw (.75,.5) node{\tiny{$+$}};
\draw (.25,-.5) node{\tiny{$+$}};
\draw (.75,-.5) node{\tiny{$-$}};
\end{scope}

\begin{scope}[xshift = 9.5cm]
\draw (.25,.5) node{\tiny{$-$}};
\draw (.75,.5) node{\tiny{$+$}};
\draw (.25,-.5) node{\tiny{$+$}};
\draw (.75,-.5) node{\tiny{$-$}};

\begin{scope}[xshift = 1.5cm]
\draw (.25,.5) node{\tiny{$-$}};
\draw (.75,.5) node{\tiny{$+$}};
\draw (.25,-.5) node{\tiny{$+$}};
\draw (.75,-.5) node{\tiny{$-$}};
\end{scope}

\end{scope}

\begin{scope}[xshift = 6cm]
\draw (.25,.5) node{\tiny{$+$}};
\draw (.75,.5) node{\tiny{$-$}};
\draw (.25,-.5) node{\tiny{$-$}};
\draw (.75,-.5) node{\tiny{$+$}};

\begin{scope}[xshift = 1.5cm]
\draw (.25,.5) node{\tiny{$+$}};
\draw (.75,.5) node{\tiny{$-$}};
\draw (.25,-.5) node{\tiny{$-$}};
\draw (.75,-.5) node{\tiny{$+$}};
\end{scope}

\end{scope}

\draw (14,.5) node{\tiny{$+$}};
\draw (14,-.5) node{\tiny{$-$}};
\draw (16,.5) node{\tiny{$-$}};
\draw (16,-.5) node{\tiny{$+$}};

\begin{knot}[
	%draft mode = crossings,
	consider self intersections,
 	clip width = 3,
 	ignore endpoint intersections = true,
	end tolerance = 2pt
	]
	\flipcrossings{1};
	
	\strand (0,.5) to [out = 0, in = 180]
	(.5,-.25) to [out = 0, in =180]
	(1,.25) to [out = 0, in = 180]
	(1.5,-.25) to [out = 0, in =180]
	(2,.5) to [out = 0, in = 180]
	(2.5,.5);
	
	\strand (0,-.5) to [out = 0, in = 180]
	(.5,.25) to [out = 0, in =180]
	(1,-.25) to [out = 0, in = 180]
	(1.5,.25) to [out = 0, in =180]
	(2,-.5) to [out = 0, in = 180]
	(2.5,-.5);

\end{knot}
\end{scope}
%-------------------------
\begin{scope} [yshift = -13cm]

\draw (12,.5) -- (14,.5);
\draw (12,-.5) -- (14,-.5);

\draw (16,.5) arc (-90:90:1);
\draw (0,.5) arc (-90:-270:1);
\draw (0,2.5) -- (16,2.5);

\draw (16,-.5) arc (90:-90:1);
\draw (0,-.5) arc (90:270:1);
\draw (0,-2.5) -- (16,-2.5);

\begin{scope}[xshift = .5cm]
\draw (3.5,-1) rectangle (4.5,1);
\draw (2,-1) rectangle (3,1);

\draw (5.5,-1) rectangle (6.5,1);
\draw (7,-1) rectangle (8,1);

\draw (9,-1) rectangle (10,1);
\end{scope}
\draw (11,-1) rectangle (12,1);

\fill[white] (15,0) circle (1.5cm);
\draw (15,0) circle (1.5cm);
\begin{scope}[xshift = .5cm]
\draw (3,.5) -- (3.5,.5);
\draw (3,-.5) -- (3.5,-.5);
\draw (4.5,.5) -- (5.5,.5);
\draw (4.5,-.5) -- (5.5,-.5);
\draw (6.5,.5) -- (7,.5);
\draw (6.5,-.5) -- (7,-.5);
\draw (8,.5) -- (9,.5);
\draw (8,-.5) -- (9,-.5);
\end{scope}
\draw (10.5,.5) -- (11,.5);
\draw (10.5,-.5) -- (11,-.5);

\begin{scope}[xshift = .5cm]
\draw (2.5,0) node{\small{$R_1^1$}};
\draw (4,0) node{\small{$R_1^2$}};
\draw (6,0) node{\small{$R_2^1$}};
\draw (7.5,0) node{\raisebox{\depth}{\scalebox{1}[-1]{$R_2^2$}}};
\draw (9.5,0) node{\raisebox{\depth}{\scalebox{1}[-1]{$R_3^1$}}};
\end{scope}
\draw (11.5,0) node{\small{$R_3^2$}};
\draw (15,0) node{\small{$R_4$}};

\begin{scope}[xshift = 4cm]
\draw (.25,.5) node{\tiny{$-$}};
\draw (.75,.5) node{\tiny{$+$}};
\draw (.25,-.5) node{\tiny{$+$}};
\draw (.75,-.5) node{\tiny{$-$}};
\end{scope}

\begin{scope}[xshift = 2.5cm]
\draw (.25,.5) node{\tiny{$-$}};
\draw (.75,.5) node{\tiny{$+$}};
\draw (.25,-.5) node{\tiny{$+$}};
\draw (.75,-.5) node{\tiny{$-$}};
\end{scope}

\begin{scope}[xshift = 9.5cm]
\draw (.25,.5) node{\tiny{$-$}};
\draw (.75,.5) node{\tiny{$+$}};
\draw (.25,-.5) node{\tiny{$+$}};
\draw (.75,-.5) node{\tiny{$-$}};

\begin{scope}[xshift = 1.5cm]
\draw (.25,.5) node{\tiny{$-$}};
\draw (.75,.5) node{\tiny{$+$}};
\draw (.25,-.5) node{\tiny{$+$}};
\draw (.75,-.5) node{\tiny{$-$}};
\end{scope}

\end{scope}

\begin{scope}[xshift = 6cm]
\draw (.25,.5) node{\tiny{$+$}};
\draw (.75,.5) node{\tiny{$-$}};
\draw (.25,-.5) node{\tiny{$-$}};
\draw (.75,-.5) node{\tiny{$+$}};

\begin{scope}[xshift = 1.5cm]
\draw (.25,.5) node{\tiny{$+$}};
\draw (.75,.5) node{\tiny{$-$}};
\draw (.25,-.5) node{\tiny{$-$}};
\draw (.75,-.5) node{\tiny{$+$}};
\end{scope}

\end{scope}

\draw (14,.5) node{\tiny{$+$}};
\draw (14,-.5) node{\tiny{$-$}};
\draw (16,.5) node{\tiny{$-$}};
\draw (16,-.5) node{\tiny{$+$}};

\begin{knot}[
	%draft mode = crossings,
	consider self intersections,
 	clip width = 3,
 	ignore endpoint intersections = true,
	end tolerance = 2pt
	]
	\flipcrossings{1};
	
	\strand (0,.5) to [out = 0, in = 180]
	(.5,-.25) to [out = 0, in =180]
	(1,.5) to [out = 0, in = 180]
	(2.5,.5);
	
	\strand (0,-.5) to [out = 0, in = 180]
	(.5,.25) to [out = 0, in =180]
	(1,-.5) to [out = 0, in = 180]
	(2.5,-.5);

\end{knot}
\end{scope}
\end{tikzpicture}$$
\caption{An example of the algorithm described in the proof of Theorem \ref{theorem:TuraevInadequate}.}
\label{figure:TReduce}
\end{figure}
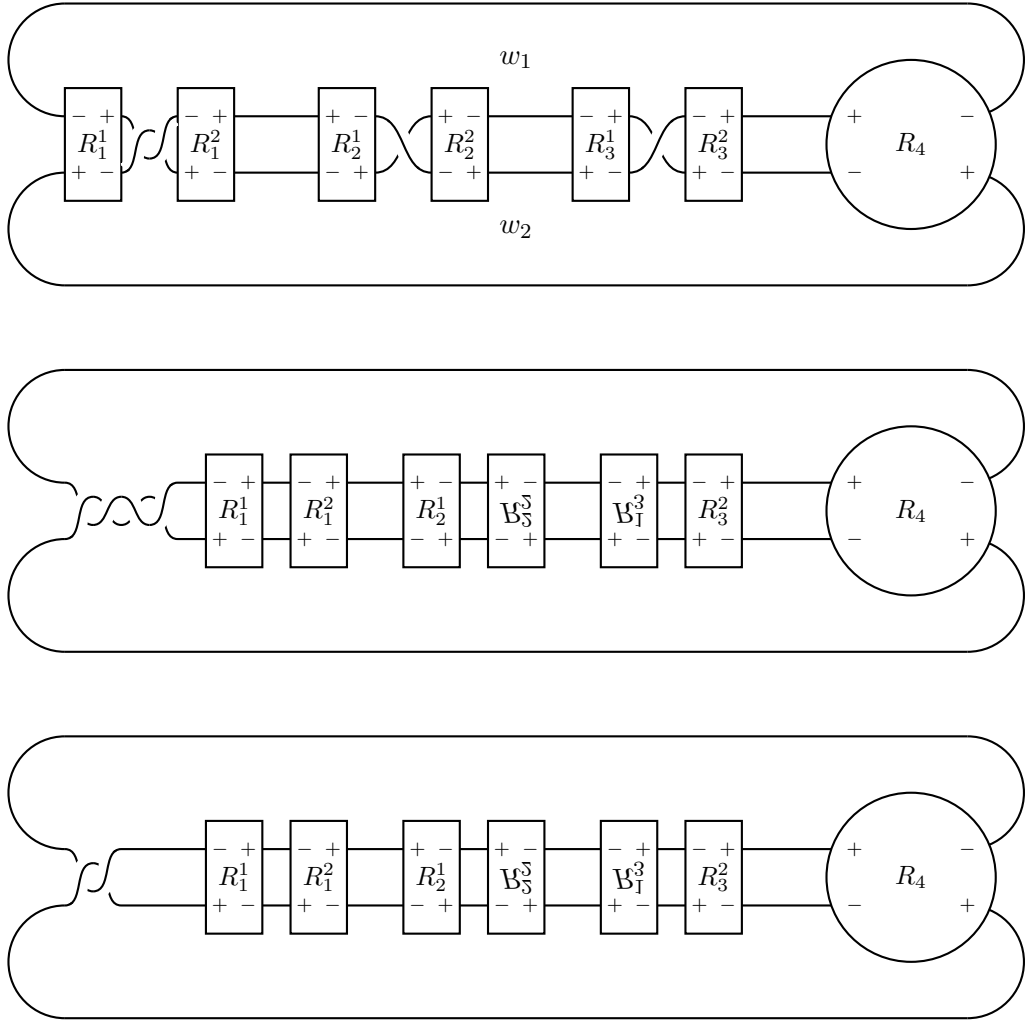

Theorems  \ref{theorem:ABaa} and \ref{theorem:TuraevInadequate} combine to give the following corollary.
\begin{corollary}
\label{corollary:ABgt1}
Every Turaev genus one link is $A$-Turaev genus one or $B$-Turaev genus one.
\end{corollary}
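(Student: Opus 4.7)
The corollary is essentially a bookkeeping step that combines Theorem \ref{theorem:ABaa} with Kim's Theorem \ref{theorem:TuraevInadequate}, so my plan is to chase through the three cases provided by the latter result.

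First I would invoke Theorem \ref{theorem:TuraevInadequate}: any Turaev genus one link $L$ has a diagram $D$ that is $A$-adequate, $B$-adequate, or almost alternating. The first two cases are immediate from the definition; an $A$-adequate diagram is by definition $A$-Turaev genus one, and a $B$-adequate diagram is by definition $B$-Turaev genus one, so in either case $L$ already has a diagram witnessing one of the two desired properties.

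The remaining case is when $D$ is almost alternating. Here I would use the fact that the Turaev genus of an alternating link is zero, so $g_T(L)=1$ forces $L$ to be non-alternating, and hence $L$ is genuinely an almost alternating link in the sense of Adams et al. Theorem \ref{theorem:ABaa} then implies that $L$ is either $A$-almost alternating or $B$-almost alternating, meaning it has a diagram $D'$ that is $A$-almost alternating or $B$-almost alternating. Since $A$-almost alternating diagrams are $A$-Turaev genus one and $B$-almost alternating diagrams are $B$-Turaev genus one (by definition), we obtain the desired conclusion in this case as well.

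There is no real obstacle here beyond correctly citing the earlier results; the only subtlety worth emphasizing in the write-up is the observation that a Turaev genus one link really is non-alternating, which is what allows us to promote the diagrammatic statement ``$D$ is almost alternating'' to the link-theoretic statement ``$L$ is an almost alternating link'' so that Theorem \ref{theorem:ABaa} applies.
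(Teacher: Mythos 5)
Your proposal is correct and follows the same route as the paper, which simply observes that Theorems \ref{theorem:TuraevInadequate} and \ref{theorem:ABaa} combine to give the corollary. Your added remark that a Turaev genus one link is necessarily non-alternating (so that the almost alternating case of Theorem \ref{theorem:TuraevInadequate} really does yield an almost alternating \emph{link} to which Theorem \ref{theorem:ABaa} applies) is a correct and worthwhile clarification, but it does not change the argument.
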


\section{Extremal Khovanov homology}
\label{section:Extremal}

\subsection{$A$- and $B$-adequate links}
Recall that a link diagram $D$ is $A$-adequate if no two arcs in the $A$-resolution of any crossing lie on the same component of the all-$A$ state of $D$.  Similarly a link diagram is $B$-adequate if no two arcs in the $B$-resolution of any crossing lie on the same component of the all-$B$ state. Links can be partitioned into three sets: adequate, semi-adequate, and inadequate. Adequate links are those links with a diagram that is both $A$-adequate and $B$-adequate, semi-adequate links are those links with a diagram that is either $A$-adequate or $B$-adequate, but not both, and inadequate links are those links where every diagram is neither $A$-adequate nor $B$-adequate.

Lickorish and Thistlethwaite \cite{LT:Adequate} introduced adequate and semi-adequate links and studied their Jones polynomials. Thistlethwaite \cite{Thistlethwaite:Kauffman} later studied the Kauffman polynomial of adequate links. Semi-adequate links play an important role in hyperbolic geometry (see \cite{FKP:HyperbolicSemi}, \cite{FKP:Survey} and references within) and in the study of the colored Jones polynomial (see \cite{Armond:HeadTail, FKP:Guts, KL:ColoredJones, Kalfagianni:Adequate}). 

The following theorem is a version of Theorem \ref{theorem:Diagonal} for $A$-adequate or $B$-adequate links. Khovanov \cite{Kh:Patterns} proved that the extremal Khovanov homology of an $A$-adequate or $B$-adequate link is isomorphic to $\mathbb{Z}$ (see Theorem \ref{theorem:ExtAdeq}). Our contribution is to determine the diagonal grading where this $\mathbb{Z}$ summand is supported.
\begin{theorem}
\label{theorem:Semi}
Let $L$ be a non-split link. 
\begin{enumerate}
\item If $L$ is $A$-adequate, then there is an $i_0\in\mathbb{Z}$ such that $$Kh^{*,j_{\min}(L)}(L) = Kh^{i_0(L),j_{\min}(L)}(L)\cong \mathbb{Z}$$ and $2i_0(L)-j_{\min}(L) = \delta_{\min}(L) + 2$.
\item If $L$ is $B$-adequate, then there is an $i_0\in\mathbb{Z}$ such that $$Kh^{*,j_{\max}(L)}(L) = Kh^{i_0(L),j_{\max}(L)}(L) \cong \mathbb{Z}$$ and 
$2i_0(L) - j_{\max}(L) = \delta_{\max}(L) -2$.
\end{enumerate}
\end{theorem}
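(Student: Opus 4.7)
The plan is to derive part (1) as a direct combination of three earlier results and to obtain part (2) by running the symmetric computation (with $A$/$B$ and $\min$/$\max$ interchanged). The three inputs are Khovanov's extremal computation (Theorem \ref{theorem:ExtAdeq}), the Dasbach--Lowrance diagonal bound (Theorem \ref{theorem:DiagonalGrading}), and the Lee--Rasmussen knight-move theorem (Theorem \ref{theorem:KnightMove}).

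For part (1), let $D$ be an $A$-adequate diagram of the non-split link $L$. Theorem \ref{theorem:ExtAdeq} gives $\underline{Kh}^{*,-s_A(D)}(D) = \underline{Kh}^{0,-s_A(D)}(D) \cong \mathbb{Z}$. Pulling this back through the grading shift of Equation \ref{equation:shift} places the extremal $\mathbb{Z}$-summand of $Kh(L)$ at
$$i_0 = -c_-(D), \qquad j_{\min}(L) = c_+(D)-2c_-(D)-s_A(D),$$
so a short arithmetic check gives $2i_0-j_{\min}(L) = s_A(D)-c_+(D)$. Theorem \ref{theorem:DiagonalGrading} already supplies the matching bound $\delta_{\min}(L)\geq s_A(D)-c_+(D)-2$, so the remaining task is to produce a nontrivial Khovanov class on the diagonal $2i-j = s_A(D)-c_+(D)-2$.

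To exhibit such a class I would apply Theorem \ref{theorem:KnightMove} to the extremal rank-one summand at $(i_0,j_{\min}(L))$. Of the four possible companion positions, the two with strictly smaller quantum grading are forbidden by minimality of $j_{\min}(L)$. The remaining two possibilities are $(i_0,j_{\min}(L)+2)$, which lies on the diagonal $s_A(D)-c_+(D)-2$, and $(i_0+1,j_{\min}(L)+4k)$ for some $k\geq 1$, which lies on the diagonal $s_A(D)-c_+(D)+2-4k$; the lower diagonal bound of Theorem \ref{theorem:DiagonalGrading} forces $k=1$ in the second case, so again the diagonal is $s_A(D)-c_+(D)-2$. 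Either way a nontrivial class appears on that diagonal, giving $\delta_{\min}(L)\leq s_A(D)-c_+(D)-2$. Combined with Theorem \ref{theorem:DiagonalGrading} this yields $\delta_{\min}(L) = 2i_0 - j_{\min}(L) - 2$, as required.

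Part (2) is the symmetric computation. For a $B$-adequate diagram $D$ of $L$, Theorem \ref{theorem:ExtAdeq} and Equation \ref{equation:shift} place the extremal summand at $(i_0,j_{\max}(L)) = (c_+(D),\,2c_+(D)-c_-(D)+s_B(D))$ with $2i_0-j_{\max}(L) = c_-(D)-s_B(D)$. Applying Theorem \ref{theorem:KnightMove} at this extremal position, now the two companions with larger quantum grading are ruled out, leaving $(i_0,j_{\max}(L)-2)$ and $(i_0-1,j_{\max}(L)-4k)$; the upper diagonal bound of Theorem \ref{theorem:DiagonalGrading} forces $k=1$, so a nontrivial class is produced on diagonal $c_-(D)-s_B(D)+2$ and $\delta_{\max}(L) = 2i_0-j_{\max}(L)+2$ follows. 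The main obstacle is conceptual rather than technical: Khovanov's theorem already pins down the location of the extremal $\mathbb{Z}$-summand exactly, so the real content is to notice that the interaction between the knight-move pairing and the Dasbach--Lowrance diagonal range forces the companion of that extremal class to land on precisely the adjacent diagonal, leaving no room for $\delta_{\min}(L)$ or $\delta_{\max}(L)$ to differ from the predicted value.
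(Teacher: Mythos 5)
Your proposal is correct and follows essentially the same route as the paper: identify the extremal $\mathbb{Z}$-summand via Theorem \ref{theorem:ExtAdeq} and Equation \ref{equation:shift}, then combine the Lee--Rasmussen knight-move (Theorem \ref{theorem:KnightMove}) with the Dasbach--Lowrance diagonal bound (Theorem \ref{theorem:DiagonalGrading}) to force the companion class onto the adjacent diagonal and pin down $\delta_{\min}$ (resp.\ $\delta_{\max}$). The only cosmetic difference is that you treat the $B$-adequate case by a direct symmetric computation where the paper invokes the mirror duality of Equation \ref{equation:Mirror}; both are fine, and your explicit ruling out of $k>1$ via the diagonal bound is a point the paper leaves implicit.
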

\begin{proof}
Suppose that $L$ is an $A$-adequate link with $A$-adequate diagram $D$. Theorem \ref{theorem:ExtAdeq} and Equation \ref{equation:shift} imply that $j_{\min}(L) = -s_A(D) + c_+(D) - 2c_-(D)$ and
$$Kh^{*,j_{\min}(L)}(L) = Kh^{-c_-(D),j_{\min}(L)}(L)\cong \mathbb{Z}.$$
Hence the diagonal grading of this summand is 
$$2i - j = -2c_-(D) - (-s_A(D) + c_+(D) - 2c_-(D)) = s_A(D) - c_+(D).$$
Theorem \ref{theorem:DiagonalGrading} implies that $s_A(D)-c_+(D)-2 \leq \delta_{\min}(L)$. Hence Theorem \ref{theorem:KnightMove} and the fact that $j_{\min}(L)$ is the minimum quantum grading imply that $\operatorname{rank} Kh^{p,q}(L)>0$ for $(p,q) = (-c_-(D),j_{\min}(L)+2)$ or $(1-c_-(D),j_{\min}(L)+4)$. In either case, we conclude that $\delta_{\min}(L)=s_A(D) - c_+(D) -2$ and that 
$$Kh^{*,j_{\min}(L)}(L)=Kh^{i_0(L),j_{\min}(L)}(L)\cong \mathbb{Z}$$
where $2 i_0(L)-j_{\min}(L) = \delta_{\min}(L) + 2$. The case where $L$ is $B$-adequate follows from Equation \ref{equation:Mirror}. 
\end{proof}

\subsection{$A$- and $B$-almost alternating links}
\label{section:Main}
In this section, we prove Theorem \ref{theorem:Diagonal}. Since every Turaev genus one link is $A$-adequate, $B$-adequate, $A$-almost alternating, or $B$-almost alternating, it remains to show that Theorem \ref{theorem:Diagonal} holds for almost alternating links. First, we compute the quantum grading of the extremal Khovanov group of an $A$-almost alternating link and show that extremal Khovanov group is isomorphic to $\mathbb{Z}$. Next we show how the diagonal grading of this summand relates to the minimal and maximal diagonal gradings. We use Equation \ref{equation:Mirror} to deduce the analogous result for $B$-almost alternating links.

Our next goal is to prove that if $D$ is an $A$-almost alternating diagram, then $j_{\min}(D)=-s_A(D)+2$ and $\underline{Kh}^{*,j_{\min}(D)}(D)\cong \mathbb{Z}$. Our first task is to show that $\underline{Kh}(D)$ is trivial in quantum gradings below $-s_A(D)+2$.

\begin{theorem}
\label{theorem:lowq}
Let $D$ be an $A$-almost alternating diagram, and let $s_A(D)$ be the number of components in the all-$A$ state of $D$. If $j<-s_A(D)+2$, then $\underline{Kh}^{i,j}(D)=0$ for any $i\in\mathbb{Z}$.
\end{theorem}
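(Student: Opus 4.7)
The plan is to resolve the dealternator of $D$ in both ways, producing alternating diagrams $D_A$ (via the $A$-resolution) and $D_B$ (via the $B$-resolution), and then use the long exact sequence of Theorem~\ref{theorem:LES} to transport the known vanishing of extremal Khovanov homology from these alternating pieces back to $D$. The conditions in Definition~\ref{definition:ABalmostalternating} ensure that $D_A$ and $D_B$ are reduced alternating, hence $A$-adequate, so Theorem~\ref{theorem:ExtAdeq} applies to both.

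The first step is a component count. Because taking the $A$-resolution at the dealternator already appears in the all-$A$ state of $D$, we have $s_A(D_A) = s_A(D)$ immediately. For $D_B$, the Turaev surface formulas
\[s_A(D) + s_B(D) = c(D) \quad \text{and} \quad s_A(D_B) + s_B(D_B) = c(D_B) + 2 = c(D) + 1,\]
combined with the dual identity $s_B(D_B) = s_B(D)$, give $s_A(D_B) = s_A(D) + 1$. Theorem~\ref{theorem:ExtAdeq} then yields $\underline{Kh}^{0, -s_A(D)}(D_A) \cong \mathbb{Z}$ with $\underline{Kh}^{i,j}(D_A) = 0$ for $j < -s_A(D)$, and $\underline{Kh}^{0, -s_A(D) - 1}(D_B) \cong \mathbb{Z}$ with $\underline{Kh}^{i,j}(D_B) = 0$ for $j < -s_A(D) - 1$.

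Feeding these into the long exact sequence
\[\underline{Kh}^{i-1, j-1}(D_B) \xrightarrow{f_*} \underline{Kh}^{i, j}(D) \xrightarrow{g_*} \underline{Kh}^{i, j}(D_A) \xrightarrow{\partial} \underline{Kh}^{i, j-1}(D_B)\]
handles most of the range. For $j \leq -s_A(D) - 1$ both flanking groups vanish, so $\underline{Kh}^{i,j}(D) = 0$. For $j = -s_A(D) + 1$ the parity constraint (the quantum grading of any nontrivial $\underline{Kh}^{*,*}(D')$ has parity $s_A(D') \pmod{2}$, since a single $B$-change alters $b + |s|$ by $0$ or $\pm 2$) forces both flanking groups to vanish. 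For $j = -s_A(D)$ and $i \notin \{0,1\}$, direct vanishing of both flanking groups again applies, so only $\underline{Kh}^{0,-s_A(D)}(D)$ and $\underline{Kh}^{1,-s_A(D)}(D)$ remain.

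The main obstacle is showing that the connecting map $\partial : \underline{Kh}^{0,-s_A(D)}(D_A) \to \underline{Kh}^{0,-s_A(D)-1}(D_B)$ is an isomorphism, which will force $\underline{Kh}^{0,-s_A(D)}(D) = \ker\partial = 0$ and $\underline{Kh}^{1,-s_A(D)}(D) = \operatorname{coker}\partial = 0$. I would attack this by chasing the snake lemma on explicit generators. The generator of $\underline{Kh}^{0,-s_A(D)}(D_A)$ is represented by the all-$A$ Kauffman state of $D_A$ with every component labeled $x$; this lifts to the enhanced state $S_0$ of $D$ whose underlying state is the all-$A$ state of $D$ with every label $x$. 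Because $s_A(D_B) = s_A(D) + 1$, the dealternator is the unique crossing at which a single $B$-change splits a component of the all-$A$ state, so the only enhanced state $S_1$ of $D$ with $b(S_1) = 1$ and $j(S_1) = -s_A(D)$ is the one with a $B$-resolution at the dealternator and $A$ elsewhere, labeled all $x$. By Figure~\ref{figure:Incidence} the incidence number $\delta(S_0, S_1) = 1$ (the split case where a single $x$-labeled component becomes two $x$-labeled components), so $d(S_0) = \pm S_1$. Writing $S_1 = f(T)$ where $T$ is the all-$A$ state of $D_B$ with every label $x$ --- a representative for the generator of $\underline{Kh}^{0,-s_A(D)-1}(D_B)$ --- the snake lemma yields $\partial(\text{generator}) = \pm\,\text{generator}$, so $\partial$ is an isomorphism and the proof is complete.
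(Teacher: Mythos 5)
Your proof is correct, but it takes a different route from the paper's. The paper disposes of this theorem with a direct chain-level count: quantum gradings below $-s_A(D)$ contain no enhanced states, the grading $-s_A(D)+1$ is excluded by parity, and in grading $-s_A(D)$ there are exactly two enhanced states (the all-$A$ state with all components labeled $x$, and the state with the dealternator $B$-resolved and all components labeled $x$), with the differential carrying the first to $\pm$ the second, so the two-term complex is acyclic. You instead run the long exact sequence of Theorem \ref{theorem:LES} at the dealternator and evaluate the connecting homomorphism by a snake-lemma chase. The two arguments are close cousins: your chase bottoms out in exactly the chain-level computation $d(S_0)=\pm S_1$ that constitutes the paper's whole proof, and both rest on the same inputs --- conditions (1) and (2) of Definition \ref{definition:ABalmostalternating} make $D_A$ and $D_B$ reduced alternating, hence $A$-adequate, which is what forces the dealternator to be the \emph{unique} crossing whose $B$-change splits the all-$A$ state. (One small expository slip: you assert uniqueness ``because $s_A(D_B)=s_A(D)+1$,'' but that identity only shows the dealternator \emph{does} split; uniqueness comes from the $A$-adequacy of $D_A$, which you have already invoked.) What your version buys is an identification of $\underline{Kh}^{0,-s_A(D)}(D)$ and $\underline{Kh}^{1,-s_A(D)}(D)$ with the kernel and cokernel of an explicit map between the extremal groups of two alternating diagrams --- essentially the strategy the introduction says the authors abandoned for the harder Theorem \ref{theorem:AAKh}, but which is viable here precisely because the bottom-grading generators are explicit single enhanced states. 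The paper's version is shorter and avoids the component-count bookkeeping for $D_B$; both versions share the same implicit standing assumptions (the tangle $R$ contributes no nugatory crossings, and neither proof actually uses condition (3A)).
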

\begin{proof}
If $j<-s_A(D)$, then there are no enhanced states with quantum grading $j$, and the result follows. Since the quantum gradings of all states of $D$ are equivalent mod $2$, it suffices to focus on the case where $j=-s_A(D)$. There are exactly two enhanced states with quantum grading $j = -s_A(D)$. The first of these states $S_1$ is the all-$A$ Kauffman state enhanced with an $x$ label on each component. The second of these states $S_2$ is the state where every crossing other than the dealternator is assigned an $A$-resolution, the dealternator is assigned the $B$-resolution, and every component is enhanced with an $x$ label. The differential in the Khovanov complex takes $S_1$ to $S_2$ and so neither state represents a homology class. Thus $\underline{Kh}^{i,-s_A(D)}(D) = 0$ for all $i\in\mathbb{Z}$.
\end{proof}

The following lemma is the main technical tool in our extremal Khovanov computations, and it is a consequence of the long exact sequence in Theorem \ref{theorem:LES}.
\begin{lemma}
\label{lemma:LESjmin}
Let $D$, $D_A$, and $D_B$ be link diagrams that are identical outside of a neighborhood of a crossing in $D$ and where that crossing is replaced by an $A$-resolution in $D_A$ and a $B$-resolution in $D_B$. If $j_{\min}(D_A)-1 < j_{\min}(D_B)$, then $j_{\min}(D)=j_{\min}(D_A)$ and $\underline{Kh}^{i,j_{\min}(D)}(D) \cong \underline{Kh}^{i,j_{\min}(D_A)}(D_A)$ for all $i\in\mathbb{Z}$.
\end{lemma}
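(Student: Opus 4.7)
The plan is to feed the hypothesis directly into the long exact sequence of Theorem \ref{theorem:LES}, which for the chosen crossing gives, in each quantum grading $j$,
$$\cdots \to \underline{Kh}^{i-1,j-1}(D_B) \xrightarrow{f_*} \underline{Kh}^{i,j}(D) \xrightarrow{g_*} \underline{Kh}^{i,j}(D_A) \xrightarrow{\partial} \underline{Kh}^{i,j-1}(D_B) \to \cdots.$$
The whole argument is essentially a parity/vanishing check on the two flanking $D_B$ groups, so the "work" is just translating the bound $j_{\min}(D_A) - 1 < j_{\min}(D_B)$ into the vanishing of the relevant terms.

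First I would handle quantum gradings $j < j_{\min}(D_A)$. For such $j$, the right-hand term $\underline{Kh}^{i,j}(D_A)$ is zero by definition of $j_{\min}(D_A)$. For the left-hand term, I note that $j - 1 < j_{\min}(D_A) - 1 < j_{\min}(D_B)$ by hypothesis, so $\underline{Kh}^{i-1,j-1}(D_B) = 0$ as well. Exactness then forces $\underline{Kh}^{i,j}(D) = 0$ for all $i$, which shows $j_{\min}(D) \geq j_{\min}(D_A)$.

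Next I would specialize to $j = j_{\min}(D_A)$. Here both $\underline{Kh}^{i,j_{\min}(D_A)-1}(D_B)$ and $\underline{Kh}^{i-1,j_{\min}(D_A)-1}(D_B)$ vanish, since $j_{\min}(D_A) - 1 < j_{\min}(D_B)$ is exactly the given hypothesis. The long exact sequence therefore collapses to
$$0 \to \underline{Kh}^{i,j_{\min}(D_A)}(D) \xrightarrow{g_*} \underline{Kh}^{i,j_{\min}(D_A)}(D_A) \to 0,$$
giving the claimed isomorphism for every $i$. Since $\underline{Kh}^{*,j_{\min}(D_A)}(D_A)$ is nontrivial by definition, this forces $j_{\min}(D) = j_{\min}(D_A)$ as well, completing the proof. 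There is no real obstacle here; the only thing to be careful about is that the hypothesis $j_{\min}(D_A) - 1 < j_{\min}(D_B)$ simultaneously kills the $D_B$ groups on both sides of the map $g_*$ at the critical quantum grading, which is precisely what the strict inequality was crafted to ensure.
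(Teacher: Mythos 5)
Your proof is correct and follows essentially the same route as the paper: both arguments run the long exact sequence of Theorem \ref{theorem:LES} at each quantum grading, use the hypothesis $j_{\min}(D_A)-1 < j_{\min}(D_B)$ to kill the $\underline{Kh}(D_B)$ terms, and read off first the vanishing for $j < j_{\min}(D_A)$ and then the isomorphism at $j = j_{\min}(D_A)$. Nothing is missing.
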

\begin{proof}
Suppose that $j<j_{\min}(D_A)$. Then $j-1< j_{\min}(D_A)-1 < j_{\min}(D_B)-1.$ Therefore $\underline{Kh}^{*,j-1}(D_B)$ and $\underline{Kh}^{*,j}(D_A)$ are trivial, and the long exact sequence of Theorem \ref{theorem:LES} implies that $\underline{Kh}^{*,j}(D)$ is also trivial.

Since $j_{\min}(D_A)-1 < j_{\min}(D_B)$, it follows that $\underline{Kh}^{*,j_{\min}(D_A)-1}(D_B)$ is trivial. The long exact sequence of Theorem \ref{theorem:LES} becomes
$$0\to \underline{Kh}^{i,j_{\min}(D_A)}(D) \to \underline{Kh}^{i,j_{\min}(D_A)}(D_A) \to 0$$
for each $i\in\mathbb{Z}$. Therefore $\underline{Kh}^{i,j_{\min}(D_A)}(D) \cong \underline{Kh}^{i,j_{\min}(D_A)}(D_A)$ for each $i\in\mathbb{Z}$, and the result follows.
\end{proof}

%---------------------------------------------------
In the following theorem, we prove that the extremal Khovanov homology of an almost alternating link is isomorphic to $\mathbb{Z}$. 
\begin{theorem}
\label{theorem:AAKh}
Let $D$ be a non-split almost alternating diagram.
\begin{enumerate}
\item If $D$ is $A$-almost alternating, then $j_{\min}(D) = 2-s_A(D)$ and $$\underline{Kh}^{*,j_{\min}(D)}(D) = \underline{Kh}^{1,j_{\min}(D)}(D) \cong \mathbb{Z}.$$
\item If $D$ is $B$-almost alternating, then $j_{\max}(D) = s_B(D) -2$ and $$\underline{Kh}^{*,j_{\max}(D)}(D) = \underline{Kh}^{c(D)-1,j_{\max}(D)}(D)\cong \mathbb{Z}.$$
\end{enumerate}
\end{theorem}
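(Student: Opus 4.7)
Part (2) follows from part (1) applied to the mirror diagram $\overline{D}$, which is $A$-almost alternating whenever $D$ is $B$-almost alternating, via Equation~\ref{equation:Mirror} together with the identities $s_A(\overline{D}) = s_B(D)$ and $c(\overline{D}) = c(D)$. Hence I focus on part~(1). By Theorem~\ref{theorem:lowq} we already have $\underline{Kh}^{i,j}(D) = 0$ for $j < 2 - s_A(D)$, so the task is to exhibit a generator of $\underline{Kh}^{1,\,2-s_A(D)}(D) \cong \mathbb{Z}$ and rule out classes in other homological gradings at that quantum grading. I plan to argue by induction on the crossing number $c(R)$ of the alternating tangle $R$ in an $A$-almost alternating presentation of $D$.

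For the inductive step, I will choose a crossing $c$ in $R$ on the boundary of $u_1$, let $D_0$ be the resolution of $c$ matching the alternating checkerboard pattern of $R$ (so that $u_1$ merges with its white neighbor $w$ across $c$ into a region $u_1'$), and let $D_1$ be the opposite resolution. The identification of which of $\{D_0, D_1\}$ plays the role of $D_A$ versus $D_B$ in Lemma~\ref{lemma:LESjmin} depends on the sign of $c$ but does not affect the argument. The two key claims are: first, that $D_0$ is again an $A$-almost alternating diagram with one fewer crossing in its alternating tangle and with $s_A(D_0) = s_A(D)$, so that the inductive hypothesis applies and gives $j_{\min}(D_0) = 2 - s_A(D)$ together with the extremal homology in homological grading $1$; and second, that $j_{\min}(D_1) \geq 3 - s_A(D)$, so that $j_{\min}(D_0) - 1 < j_{\min}(D_1)$. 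Claim one uses that conditions~(1), (2), and~(3A) of Definition~\ref{definition:ABalmostalternating} are inherited by $D_0$ under the merging of $u_1$ and $w$, a consequence of the alternating structure of $R$ and the original hypothesis on $D$. Claim two uses that the opposite resolution at $c$ merges two distinct components of the all-$A$ state of $D$ (since $w$ and $u_1$ are distinct white regions of $R$), yielding $s_A(D_1) = s_A(D) - 1$, after which Theorem~\ref{theorem:lowq} applied to $D_1$ supplies the required inequality. With the two claims in hand, Lemma~\ref{lemma:LESjmin} delivers $\underline{Kh}^{i,\,2-s_A(D)}(D) \cong \underline{Kh}^{i,\,2-s_A(D_0)}(D_0)$ for every $i$, and the induction closes.

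The main obstacle is verifying condition~(3A) for $D_0$: one must exclude any white region $w'$ of the new tangle $R_0$ that shares a crossing with both $u_1'$ and $u_2$. A potential violator $w'$ would either share a crossing with the original $u_1$ and with $u_2$, which is forbidden by~(3A) on $D$, or share a crossing with $w$ and with $u_2$; ruling out the latter requires a local analysis of $R$ around $c$, and may force a more careful (or iterated) choice of $c$ to avoid such configurations. The base cases of the induction, in which no useful $c$ can be selected on the boundary of $u_1$ or $u_2$, or in which $D_0$ degenerates to an alternating diagram after the resolution and a Reidemeister simplification, are handled directly: the alternating case follows from Theorem~\ref{theorem:ExtAdeq} combined with the knight-move pattern of Theorem~\ref{theorem:KnightMove}, which together force a $\mathbb{Z}$-summand at bidegree $(1,\,2-s_A(D_0))$ of the unshifted Khovanov homology of the alternating $D_0$, matching the bidegree dictated by the isomorphism from Lemma~\ref{lemma:LESjmin}.
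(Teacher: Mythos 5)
Your overall strategy (induction on the crossings of the alternating tangle $R$, feeding the inductive hypothesis into the long exact sequence via Lemma \ref{lemma:LESjmin}, with part (2) obtained by mirroring) is the same as the paper's, but the execution has a gap the argument cannot recover from: you have the two resolutions at a crossing $c$ of $R$ interchanged, and this breaks both the grading bookkeeping and the applicability of Lemma \ref{lemma:LESjmin}. At the dealternator the $A$-resolution joins the white regions $u_1,u_2$, so at every crossing of $R$ the $A$-resolution joins the two \emph{black} regions; equivalently, taking an $A$-resolution deletes an edge of $G$, while the resolution that merges $u_1$ with its white neighbor $w$ --- your $D_0$ --- is the \emph{$B$}-resolution (contraction of an edge of $G$). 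Consequently $s_A(D_0)=s_A(D)-1$ and $s_A(D_1)=s_A(D)$, the opposite of what you assert. With the roles corrected, your inductively understood diagram $D_0$ sits in the $D_B$ slot of the long exact sequence, where Lemma \ref{lemma:LESjmin} gives no conclusion; indeed, since $j_{\min}(D_0)=3-s_A(D)$ by induction, the terms $\underline{Kh}^{\ast,j-1}(D_0)$ vanish for all $j\leq 2-s_A(D)$, and the sequence shows that $\underline{Kh}^{\ast,2-s_A(D)}(D)$ is controlled entirely by the \emph{other} resolution $D_1$, about which your induction says nothing. (Even with a dual version of Lemma \ref{lemma:LESjmin} reading off $\underline{Kh}(D)$ from the $D_B$ slot, its hypothesis fails here, and the homological grading would shift by one at each step, so the generator could not land in degree $1$.) The condition-(3A) inheritance issue you flag is a genuine obstruction to your choice of resolution and is left unresolved; the paper's choice --- always take the $A$-resolution, i.e.\ delete edges of $G$ --- makes (3A) automatic, since deletion cannot create a common neighbor of $u_1$ and $u_2$, and relocates the difficulty to condition (2) for $v_1,v_2$.

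That relocated difficulty is exactly what the base case must absorb, and your treatment of it does not work. When no crossing of $R$ admits an $A$-resolution preserving $A$-almost-alternating-ness, the diagram is forced to be the explicit two-twist-region diagram of Figure \ref{figure:basecase}; the paper isotopes it to an alternating diagram and uses the Reidemeister shifts of Proposition \ref{prop:Reidemeister} to place the generator at bidegree $(1,2-s_A(D))$. Your claim that Theorem \ref{theorem:ExtAdeq} together with the knight-move pattern of Theorem \ref{theorem:KnightMove} forces a $\mathbb{Z}$-summand of an alternating diagram at $(1,2-s_A(D_0))$ is false: Theorem \ref{theorem:ExtAdeq} places the extremal $\mathbb{Z}$ at $(0,-s_A(D_0))$, and Theorem \ref{theorem:KnightMove} only guarantees that one of the bidegrees $(0,-s_A(D_0)+2)$ or $(1,-s_A(D_0)+4)$ carries nonzero rank --- neither equals $(1,2-s_A(D_0))$, and nothing forces the group there to be exactly $\mathbb{Z}$ or to vanish in other homological degrees. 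Both the inductive step and the base case therefore need to be redone along the lines of the paper's proof.
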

\begin{proof}
Suppose that $D$ is an $A$-almost alternating diagram such that the diagram obtained by taking an $A$-resolution for any crossing in $R$ is not $A$-almost alternating. Taking an $A$-resolution corresponds to deleting an edge in $G$ and thus does not increase $\operatorname{adj}(u_1,u_2)$. Therefore, it must be the case the diagram obtained by performing an $A$-resolution of any crossing in $R$ has a crossing in the boundary of both $v_1$ and $v_2$. Hence $D$ is the diagram in Figure \ref{figure:basecase}. Figure \ref{figure:basecase} shows that in this case $D$ and $D_{\alt}$ are diagrams of the same alternating link. Since $D_{\alt}$ is alternating, $j_{\min}(D_{\alt})=-s_A(D_{\alt}) = -a-b+4$ and $\underline{Kh}^{*,j_{\min}(D_{\alt})}(D_{\alt})=\underline{Kh}^{0,-a-b+4}(D_{\alt})\cong \mathbb{Z}$. Proposition \ref{prop:Reidemeister} then implies that $j_{\min}(D) = -a-b+1 = -s_A(D)+2$ and that $\underline{Kh}^{*,j_{\min}(D)}(D) = \underline{Kh}^{1,-s_A(D)+2}(D)\cong \mathbb{Z}$, as desired.

Now suppose that $D$ is an $A$-almost alternating diagram such that $D$ contains a crossing in $R$ whose $A$-resolution $D_A$ is also $A$-almost alternating. By induction, $j_{\min}(D_A) = -s_A(D_A)+2$ and $\underline{Kh}^{*,j_{\min}(D_A)}(D_A) = \underline{Kh}^{1,-s_A(D_A)+2}(D_A)\cong \mathbb{Z}$. The diagram $D_B$ is almost alternating, and hence Theorem \ref{theorem:lowq} implies that $j_{\min}(D_B)\geq -s_A(D_B)+2$. Since $s_A(D) = s_A(D_A) = s_A(D_B)+1$, it follows that $j_{\min}(D_A) - 1 = -s_A(D_A) +1 < -s_A(D_B)+2\leq j_{\min}(D_B)$. Thus Lemma \ref{lemma:LESjmin} implies that $j_{\min}(D)=j_{\min}(D_A)=-s_A(D_A)+2=-s_A(D)+2$ and $\underline{Kh}^{*,j_{\min}(D)}(D) = \underline{Kh}^{1,-s_A(D)+2}(D)\cong \mathbb{Z}$.

The case where $D$ is $B$-almost alternating follows from the $A$-almost alternating case and Equation \ref{equation:Mirror}.
\end{proof}
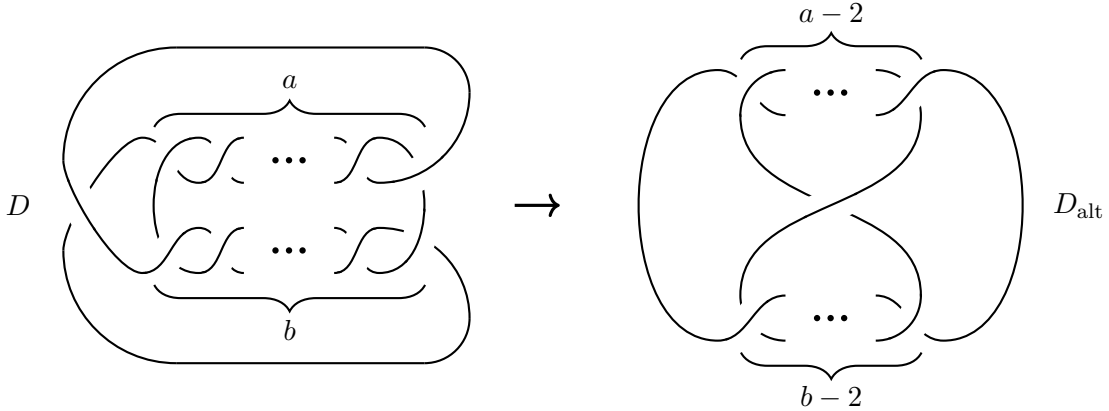
\begin{figure}[h]
$$\begin{tikzpicture}[thick, scale = .6]
\draw (0,1.5) node{$D$};
\draw (23.5, 1.5) node{$D_{\alt}$};
\begin{knot}[
	%draft mode = crossings,
 	clip width = 4,
 	ignore endpoint intersections = true,
	end tolerance = 2pt
	]
	\flipcrossings{1,2,5,8,7};

	\strand(1,.5) to [out = 90, in = 180, looseness=.5]
	(2.75,3) to [out = 0, in = 180]
	(4,2) to [out = 0, in = 180]
	(5,3);
	
	\strand (1,2.5) to [out = 270, in =180, looseness=.5]
	(2.75,0) to [out = 0, in = 180]
	(4,1) to [out = 0, in = 180]
	(5,0);
	
	\strand (5,1) to [out = 180, in = 0]
	(4,0) to [out = 180, in = 270]
	(3,1.5) to [out = 90, in= 180]
	(4,3) to [out = 0, in = 180]
	(5,2);
	
	\strand (7,0) to [out = 0, in = 180]
	(8,1) to [out = 0, in = 90]
	(10,-1);
	
	\strand (7,3) to [out = 0, in = 180]
	(8,2) to [out = 0, in = 270]
	(10,4);
	
	\strand (7,1) to [out = 0, in = 180]
	(8,0) to [out = 0, in = 270]
	(9,1.5) to [out = 90, in = 0]
	(8,3) to [out = 180, in = 0]
	(7,2);
	
\end{knot}

\draw (1,2.5) arc (180:90:2.5cm);
\draw (1,.5) arc (180:270:2.5cm);
\draw (10,-1) arc (0:-90:1cm);
\draw (10,4) arc (0:90:1cm);
\draw (9,-2) -- (3.5,-2);
\draw (9,5) -- (3.5,5);
\fill (6,.5) circle (2pt);
\fill (6.3,.5) circle (2pt);
\fill (5.7,.5) circle (2pt);
\fill (6,2.5) circle (2pt);
\fill (6.3,2.5) circle (2pt);
\fill (5.7,2.5) circle (2pt);

\draw [thick,decorate,decoration={brace,amplitude=10pt,mirror},xshift=0.4pt,yshift=-0.4pt](3,-.25) -- (9,-.25) node[black,midway,yshift=-0.6cm] {$b$};

\draw [thick,decorate,decoration={brace,amplitude=10pt},xshift=0.4pt,yshift=-0.4pt](3,3.25) -- (9,3.25) node[black,midway,yshift=0.6cm] {$a$};

\draw[very thick, ->] (11,1.5) -- (12,1.5);

% -----------------------------------------------
\begin{scope}[xshift = 12cm, yshift = 1.5cm]

	\begin{knot}[
	%draft mode = crossings,
 	clip width = 4,
 	ignore endpoint intersections = true,
	end tolerance = 2pt
	]
	\flipcrossings{5,2};
	\strand (5,3) to [out = 180, in = 90]
	(4,2) to [out = 270, in = 90]
	(8,-2) to [out = 270, in = 0]
	(7,-3);
	
	\strand (5,2) to [out = 180, in = 0]
	(3.5,3) to [out = 180, in = 180]
	(3.5,-3) to [out = 0, in = 180]
	(5,-2); 
	
	\strand (5,-3) to [out = 180, in = 270]
	(4,-2) to [out = 90, in = 270]
	(8,2) to [out = 90, in = 0]
	(7,3);
	
	\strand (7,2) to [out = 0, in = 180]
	(8.5,3) to [out = 0, in = 0]
	(8.5,-3) to [out = 180, in = 0]
	(7,-2);
	
	\end{knot}
	
	\fill (6,-2.5) circle (2pt);
	\fill (6.3,-2.5) circle (2pt);
	\fill (5.7,-2.5) circle (2pt);
	\fill (6,2.5) circle (2pt);
	\fill (6.3,2.5) circle (2pt);
	\fill (5.7,2.5) circle (2pt);
	
	\draw [thick,decorate,decoration={brace,amplitude=10pt,mirror},xshift=0.4pt,yshift=-0.4pt](4,-3.25) -- (8,-3.25) node[black,midway,yshift=-0.6cm] {$b-2$};
	\draw [thick,decorate,decoration={brace,amplitude=10pt},xshift=0.4pt,yshift=-0.4pt](4,3.25) -- (8,3.25) node[black,midway,yshift=0.6cm] {$a-2$};

\end{scope}
\end{tikzpicture}$$
\caption{The link diagram $D$ can be transformed into the alternating diagram $D_{\alt}$.}
\label{figure:basecase}
\end{figure}

\begin{corollary}
\label{corollary:Diagonal}
Let $L$ be a non-split link. 
\begin{enumerate}
\item If $L$ is $A$-almost alternating, then there is an $i_0\in\mathbb{Z}$ such that $$Kh^{*,j_{\min}(L)}(L) = Kh^{i_0(L),j_{\min}(L)}(L)\cong \mathbb{Z}$$ and $2i_0(L)-j_{\min}(L) = \delta_{\min}(L) + 2$.
\item If $L$ is $B$-almost alternating, then there is an $i_0\in\mathbb{Z}$ such that $$Kh^{*,j_{\max}(L)}(L) = Kh^{i_0(L),j_{\max}(L)}(L) \cong \mathbb{Z}$$ and 
$2i_0(L) - j_{\max}(L) = \delta_{\max}(L) -2$.
\end{enumerate}
\end{corollary}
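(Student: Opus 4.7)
The plan is to derive Corollary \ref{corollary:Diagonal} from Theorem \ref{theorem:AAKh} in essentially the same way that Theorem \ref{theorem:Semi} was derived from Theorem \ref{theorem:ExtAdeq}. That is, I would first transport the unshifted Khovanov statement for an $A$-almost alternating diagram $D$ to a statement about $Kh(L)$ via the grading shift in Equation \ref{equation:shift}, then pin down the $\delta$-grading of the resulting extremal $\mathbb{Z}$-summand by bracketing $\delta_{\min}(L)$ from below with Theorem \ref{theorem:DiagonalGrading} and from above with the Lee--Rasmussen knight-move theorem (Theorem \ref{theorem:KnightMove}). Statement (2) should then fall out of statement (1) by passing to the mirror via Equation \ref{equation:Mirror}.

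For statement (1), let $D$ be an $A$-almost alternating diagram of $L$. Feeding Theorem \ref{theorem:AAKh}(1) through Equation \ref{equation:shift} produces a single nontrivial Khovanov group in the extremal quantum grading, namely $Kh^{*,\,j_{\min}(L)}(L) = Kh^{1-c_-(D),\,j_{\min}(L)}(L) \cong \mathbb{Z}$ with $j_{\min}(L) = 2 - s_A(D) + c_+(D) - 2c_-(D)$. Setting $i_0 = 1 - c_-(D)$, a direct computation yields $2 i_0 - j_{\min}(L) = s_A(D) - c_+(D)$. Because $D$ is almost alternating and hence Turaev genus one, Theorem \ref{theorem:DiagonalGrading} immediately gives the lower bound $\delta_{\min}(L) \geq s_A(D) - c_+(D) - 2$. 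For the matching upper bound I would apply Theorem \ref{theorem:KnightMove} to the nontrivial class at $(i_0, j_{\min}(L))$: minimality of $j_{\min}(L)$ rules out the knight-partners at $(i_0, j_{\min}-2)$ and $(i_0 - 1, j_{\min}-4k)$, while the Turaev genus one bound of Theorem \ref{theorem:DiagonalGrading} rules out long-jump partners $(i_0+1, j_{\min}+4k)$ with $k \geq 2$. The two surviving possibilities, $(i_0, j_{\min}+2)$ and $(i_0+1, j_{\min}+4)$, both sit at diagonal $s_A(D) - c_+(D) - 2$, which forces $\delta_{\min}(L) = s_A(D) - c_+(D) - 2$ and hence $2 i_0 - j_{\min}(L) = \delta_{\min}(L) + 2$.

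Statement (2) follows from statement (1) applied to the mirror $\overline{L}$, which is $A$-almost alternating whenever $L$ is $B$-almost alternating; Equation \ref{equation:Mirror} then converts the extremal information back to $L$. I do not anticipate any real obstacle here: all the heavy lifting is already in Theorem \ref{theorem:AAKh}, and the remaining argument is the same pincer (Theorem \ref{theorem:DiagonalGrading} on one side, Theorem \ref{theorem:KnightMove} on the other) used in the adequate case. If any step requires care, it is the verification that every admissible knight-move partner of the extremal class lands at diagonal $s_A(D) - c_+(D) - 2$, but Theorem \ref{theorem:DiagonalGrading} settles this cleanly for Turaev genus one diagrams.
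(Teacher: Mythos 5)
Your proposal is correct and follows essentially the same route as the paper: shift Theorem \ref{theorem:AAKh} via Equation \ref{equation:shift} to locate the extremal $\mathbb{Z}$-summand at $(1-c_-(D),\,2-s_A(D)+c_+(D)-2c_-(D))$ on diagonal $s_A(D)-c_+(D)$, then combine Theorem \ref{theorem:DiagonalGrading} with Theorem \ref{theorem:KnightMove} to force $\delta_{\min}(L)=s_A(D)-c_+(D)-2$, and mirror for the $B$-case. Your explicit verification that both surviving knight-move partners land on the diagonal $s_A(D)-c_+(D)-2$ is exactly the step the paper leaves implicit.
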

\begin{proof}
Let $D$ be an $A$-almost alternating diagram of $L$. Theorem \ref{theorem:AAKh} and Equation \ref{equation:shift} imply that $j_{\min}(L) = 2-s_A(D) + c_+(D) - 2c_-(D)$ and that 
$$Kh^{*,j_{\min}(L)}(L) = Kh^{1-c_-(D),j_{\min}(L)}(L)\cong \mathbb{Z}.$$
Thus $2i_0(L)-j_{\min}(L) = 2(1-c_-(D)) - (2-s_A(D)+c_+(D) -2c_-(D)) = s_A(D) - c_+(D)$. Since $Kh^{*,j}(L)=0$ when $j< j_{\min}(L)$, Theorems \ref{theorem:KnightMove} and \ref{theorem:DiagonalGrading} imply that either $Kh^{i_0(L), j_{\min}(L)+2}(L)$ or $Kh^{i_0(L)+1,j_{\min}(L)+4}(L)$ are nontrivial.  Thus $\delta_{\min}(L) = s_A(D)-c_+(D) - 2$ and $2i_0(L) - j_{\min}(L) = \delta_{\min}(L)+2$. 

The case where $L$ is $B$-almost alternating is similar.
\end{proof}

%----------------------------------------------------
\begin{proof}[Proof of Theorem \ref{theorem:Diagonal}]
If $L$ is $A$-adequate, then Theorem \ref{theorem:Semi} is the desired result. If $L$ is $A$-almost alternating, then Corollary \ref{corollary:Diagonal} is the desired result. If $L$ is $A$-Turaev genus one, then $L$ is either $A$-adequate or $A$-almost alternating. The case where $L$ is $B$-Turaev genus one is handled similarly.
\end{proof}
Corollary \ref{corollary:Main} follows from Theorem \ref{theorem:Diagonal} and Corollary \ref{corollary:ABgt1}.
\begin{proof}[Proof of Corollary \ref{corollary:Main}]
Let $L$ be a non-split link with Turaev genus one. Corollary \ref{corollary:ABgt1} implies that $L$ is $A$-Turaev genus one or $B$-Turaev genus one. If $L$ is $A$-Turaev genus one, then Theorem \ref{theorem:Diagonal} implies $Kh^{*,j_{\min}(L)}(L)\cong \mathbb{Z}$, and if $L$ is $B$-Turaev genus one, then Theorem \ref{theorem:Diagonal} implies $Kh^{*,j_{\max}(L)}(L)\cong\mathbb{Z}$.
\end{proof}

%%%
Let $Kh_{\text{odd}}(L)$ denote the odd Khovanov homology of the link $L$. The following theorem is a version of Theorem \ref{theorem:Diagonal} for $Kh_{\text{odd}}(L)$.
\begin{theorem}
\label{theorem:OddDiagonal}
Suppose that $L$ is a non-split link.
\begin{enumerate}
\item If $L$ is $A$-Turaev genus one, then the minimum quantum gradings of $Kh(L)$ and $Kh_{\text{odd}}(L)$ are equal, and there is an $i_0\in\mathbb{Z}$ such that $Kh^{*,j_{\min}(L)}_{\text{odd}}(L) = Kh^{i_0,j_{\min}(L)}_{\text{odd}}(L)\cong \mathbb{Z}$ and $2i_0-j_{\min}(L) = \delta_{\min}(L) + 2$.
\item If $L$ is $B$-Turaev genus one, then the maximum quantum grading of $Kh(L)$ and $Kh_{\text{odd}}(L)$ are equal, and there is an $i_0\in\mathbb{Z}$ such that $Kh^{*,j_{\max}(L)}_{\text{odd}}(L) = Kh^{i_0,j_{\max}(L)}_{\text{odd}}(L) \cong \mathbb{Z}$ and 
$2i_0 - j_{\max}(L) = \delta_{\max}(L) -2$.
\end{enumerate}
\end{theorem}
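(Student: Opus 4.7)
The plan is to run the proof of Theorem \ref{theorem:Diagonal} again in the odd setting by checking that each ingredient from Section \ref{section:Extremal} has a direct odd analog. The four key ingredients are: Khovanov's long exact sequence (Theorem \ref{theorem:LES}); the Reidemeister/flype grading shifts (Proposition \ref{prop:Reidemeister}); the extremal adequate computation (Theorem \ref{theorem:ExtAdeq}); and the diagonal grading bound (Theorem \ref{theorem:DiagonalGrading}) combined with the knight-move structure (Theorem \ref{theorem:KnightMove}). Since the odd Khovanov complex of Ozsv\'ath, Rasmussen, and Szab\'o has the same enhanced-state generators, the same bigrading, and the same underlying cube of resolutions as the ordinary Khovanov complex, with only the signs on the differential altered, the first three ingredients transfer essentially verbatim from the same short exact sequence of chain complexes and the same enhanced-state count at the extremal quantum grading. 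The fourth ingredient for $Kh_{\text{odd}}$ is obtained by adapting the Manturov and Champanerkar--Kofman--Stoltzfus diagonal bounds to odd coefficients and invoking Shumakovitch's odd-Lee deformation for the required knight-move constraint.

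With these analogs in place, I would first establish the odd version of Theorem \ref{theorem:lowq}: for an $A$-almost alternating diagram $D$, $\underline{Kh}_{\text{odd}}(D)$ vanishes in quantum gradings below $-s_A(D)+2$. The proof is unchanged, since exactly two enhanced states occupy quantum grading $-s_A(D)$, and one is the image of the other under the odd differential by the same local computation as in the even case. In particular, this forces the minimum quantum gradings of $Kh(L)$ and $Kh_{\text{odd}}(L)$ to agree. The odd form of Lemma \ref{lemma:LESjmin}, together with the base case of Figure \ref{figure:basecase} (a sequence of Reidemeister 1 moves on an alternating diagram, whose odd Khovanov homology is supported on two diagonals), then drives the induction of Theorem \ref{theorem:AAKh} to give
\[
\underline{Kh}_{\text{odd}}^{*,j_{\min}(D)}(D) \; = \; \underline{Kh}_{\text{odd}}^{1,j_{\min}(D)}(D) \; \cong \; \mathbb{Z}.
\]
The $A$-adequate case follows from the odd analog of Theorem \ref{theorem:ExtAdeq} exactly as in the proof of Theorem \ref{theorem:Semi}, and Corollary \ref{corollary:ABgt1} merges the two to cover all $A$-Turaev genus one links.

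The diagonal identification $2i_0 - j_{\min}(L) = \delta_{\min}(L) + 2$ then proceeds as in Corollary \ref{corollary:Diagonal}: the $\mathbb{Z}$-summand lies on diagonal $s_A(D) - c_+(D)$, and the odd diagonal bound combined with the odd knight-move structure forces $\delta_{\min}(L) = s_A(D) - c_+(D) - 2$. The $B$-Turaev genus one statement follows by odd-Khovanov mirror symmetry, giving the analog of Equation \ref{equation:Mirror}. The main obstacle is the knight-move input: although Shumakovitch has constructed an odd-Lee-type perturbation on $Kh_{\text{odd}}(L;\mathbb{Q})$ and the associated spectral sequence, one must check carefully that the induced grading shifts constrain the adjacent diagonals in the same fashion that the Rasmussen--Lee setup constrains them in the even case. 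Once this verification is in place, the remainder of the argument follows the proof of Theorem \ref{theorem:Diagonal} step for step.
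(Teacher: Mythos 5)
Your treatment of the almost alternating case matches the paper's: since the odd Khovanov chain groups, bigradings, and cube of resolutions coincide with the even ones and the long exact sequence of Theorem \ref{theorem:LES} holds in the odd setting, Theorem \ref{theorem:lowq}, Lemma \ref{lemma:LESjmin}, and Theorem \ref{theorem:AAKh} carry over verbatim, and this also gives the equality of the extremal quantum gradings. You diverge from the paper in two places. First, to pass from almost alternating links to all $A$- or $B$-Turaev genus one links, the paper does not rerun the adequate case in the odd theory; it quotes Bloom's theorem that odd Khovanov homology is mutation invariant together with the Armond--Lowrance fact that every Turaev genus one link is related to an almost alternating link by a sequence of mutations, which disposes of all Turaev genus one links at once. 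Your route through an odd analog of Theorem \ref{theorem:ExtAdeq} for the adequate subcase and Corollary \ref{corollary:ABgt1} also works (the extremal odd group is still generated by the single all-$x$ enhanced state), so this is a legitimate alternative, just less economical.

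The second divergence is the one to fix. Your derivation of $2i_0-j_{\min}(L)=\delta_{\min}(L)+2$ via odd analogs of the diagonal-support bounds and an ``odd knight move'' coming from an odd-Lee deformation is both the shakiest step and an unnecessary one. It is shaky because the odd analog of Theorem \ref{theorem:KnightMove} is not a formal consequence of the even one --- the Lee-type perturbations of the odd complex are not known to impose the same pairing of adjacent diagonals, and you yourself flag this as unverified. It is unnecessary because $\delta_{\min}(L)$ in the statement is defined using ordinary Khovanov homology: once your inductive computation places the odd extremal class in the same bigrading $\left(1-c_-(D),\,j_{\min}(L)\right)$ (respectively $\left(-c_-(D),\,j_{\min}(L)\right)$ in the adequate case) as the even extremal class, the identity $2i_0-j_{\min}(L)=\delta_{\min}(L)+2$ is inherited directly from Theorem \ref{theorem:Diagonal} with no odd knight move at all. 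Replace the odd-Lee discussion with that one-line observation (or adopt the mutation argument) and the proof is complete.
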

\begin{proof}
Suppose that $L$ is almost alternating. The proofs of Theorem \ref{theorem:lowq}, Lemma \ref{lemma:LESjmin}, Theorem \ref{theorem:AAKh}, and Corollary \ref{corollary:Diagonal} rely on gradings in the Khovanov chain complex, the fact that both extremal Khovanov homology groups of an alternating link are isomorphic to $\mathbb{Z}$, and the long exact sequence of Theorem \ref{theorem:LES}. Since the chain groups for odd Khovanov homology are isomorphic to the chain groups for Khovanov homology, the extremal odd Khovanov homology groups of an alternating link are isomorphic to $\mathbb{Z}$, and the long exact sequences of Theorem \ref{theorem:LES} also hold in the odd Khovanov setting, Corollary \ref{corollary:Diagonal} holds for odd Khovanov homology. Thus the desired result holds for almost alternating links.

Bloom \cite{Bloom:Mutation} proved that odd Khovanov homology is mutation invariant. Armond and Lowrance \cite{ArmLow:Turaev} proved that if $L$ is a Turaev genus one link, then there is a sequence of mutations transforming $L$ into an almost alternating link. Therefore the desired result holds for Turaev genus one links.
\end{proof}

\section{Examples}
\label{section:Examples}
In the next few examples, we give computations of Khovanov homology. The Khovanov homology is presented via a grid where the summand $Kh^{i,j}(L)$ is represented in the $(i,j)$ cell of the grid. A summand of $\mathbb{Z}^k$ is represented by $k$, a summand of $\mathbb{Z}_p^k$ is represented by $k_p$, and different summands in the same bigrading are separated by commas. All Khovanov homology computations in the examples were performed using the KnotTheory Mathematica package \cite{knotatlas}.
\begin{example}
\label{example:10_132}
Let $K=10_{132}\#\overline{10_{132}}$ be the connected sum of the knot $10_{132}$ and its mirror, as in Figure \ref{figure:10_132}. Both the leading and trailing coefficients of the Jones polynomial of $K$ are $-1$, and hence the results of \cite{DasLow:TuraevJones} do not apply. However, both Khovanov groups in the minimal and maximal quantum gradings have rank two. Therefore Theorem \ref{theorem:Diagonal} implies that $K$ is inadequate and has Turaev genus and dealternating number at least two. Since the Khovanov homology of $K$ lies on four diagonals, this knot was known to be of Turaev genus and dealternating number at least two by \cite{Manturov:Minimal}, \cite{CKS:Graphs}, and \cite{AP:Torsion}.
\end{example}
\begin{figure}[h]
 $$\begin{tikzpicture}
 \begin{scope}[scale = .8, yscale = .9]

 \begin{knot}[
 	%draft mode = crossings,
 	clip width= 7,
	consider self intersections,
	ignore endpoint intersections = false,
	]
 	\flipcrossings{2,4,5,7,9,11,14,16,18};
	\strand[thick, rotate = 90] (2.2,1.5) to [curve through = {(3,1.8) .. (3.5,3)  .. (2.5,5) .. (2,5.5) .. (2.5,6) .. (5,6) .. (6,4) .. (5,3.5) .. (4,4) .. (2.5,3) .. (3,2) .. (5,1) .. (8,1.5) .. (9,1.8) .. (9.5,3) .. (8.5,5) .. (8,5.5) .. (8.5,6) .. (11,6) .. (12,4) .. (11,3.5) .. (10,4) .. (8.5,3) .. (9,2) .. (11,1) .. (12,3) .. (11,4) .. (10,3) .. (9.5,1) .. (8.5,1) .. (7.8,2) .. (7,3) .. (7,4) .. (8,5) .. (8.5,5.5) .. (8,6) .. (8,4) .. (5,4) .. (4,3) .. (3.5,1) .. (2.5,1) .. (1.8,2) .. (1,3) .. (1,4) .. (2,5)  .. (2.5,5.5) .. (2,6) .. (2,4) .. (1,2)}] (1.8,1.5);

\end{knot}
\end{scope}

\begin{scope}[yshift = .5cm, xshift = 1cm, scale = .55]
\draw (0,0) rectangle (16,15);
\foreach \i in {1,...,15}
{	
	\draw (\i,0) -- (\i,15);
}
\foreach \i in {1,...,14}
{
	\draw (0,\i) -- (16,\i);
}

\draw (.5,.5) node{\scriptsize{$\minus 13$}};
\draw (.5,1.5) node{\scriptsize{$\minus 11$}};
\draw (.5,2.5) node{\scriptsize{$\minus 9$}};
\draw (.5,3.5) node{\scriptsize{$\minus 7$}};
\draw (.5,4.5) node{\scriptsize{$\minus 5$}};
\draw (.5,5.5) node{\scriptsize{$\minus 3$}};
\draw (.5,6.5) node{\scriptsize{$\minus 1$}};
\draw (.5,7.5) node{\scriptsize{$1$}};
\draw (.5,8.5) node{\scriptsize{$3$}};
\draw (.5,9.5) node{\scriptsize{$5$}};
\draw (.5,10.5) node{\scriptsize{$7$}};
\draw (.5,11.5) node{\scriptsize{$9$}};
\draw (.5,12.5) node{\scriptsize{$11$}};
\draw (.5,13.5) node{\scriptsize{$13$}};

\draw (1.5,14.5) node{\scriptsize{$\minus 7$}};
\draw (2.5,14.5) node{\scriptsize{$\minus 6$}};
\draw (3.5,14.5) node{\scriptsize{$\minus 5$}};
\draw (4.5,14.5) node{\scriptsize{$\minus 4$}};
\draw (5.5,14.5) node{\scriptsize{$\minus 3$}};
\draw (6.5,14.5) node{\scriptsize{$\minus 2$}};
\draw (7.5,14.5) node{\scriptsize{$\minus 1$}};
\draw (8.5,14.5) node{\scriptsize{$0$}};
\draw (9.5,14.5) node{\scriptsize{$1$}};
\draw (10.5,14.5) node{\scriptsize{$2$}};
\draw (11.5,14.5) node{\scriptsize{$3$}};
\draw (12.5,14.5) node{\scriptsize{$4$}};
\draw (13.5,14.5) node{\scriptsize{$5$}};
\draw (14.5,14.5) node{\scriptsize{$6$}};
\draw (15.5,14.5) node{\scriptsize{$7$}};

\draw (.5,14.5) node{\scriptsize{$j \backslash i$}};

\draw (1.5,.5) node{\tiny{$1$}};
\draw (2.5,.5) node{\tiny{$1$}};
\draw (2.5,1.5) node{\tiny{$1_2$}};
\draw (3.5,1.5) node{\tiny{$1{,}1_2$}};
\draw (2.5,2.5) node{\tiny{$1$}};
\draw (3.5,2.5) node{\tiny{$3$}};
\draw (4.5,2.5) node{\tiny{$2{,}1_2$}};
\draw (4.5,3.5) node{\tiny{$3{,}2_2$}};
\draw (5.5,3.5) node{\tiny{$5{,}2_2$}};
\draw (6.5,3.5) node{\tiny{$1$}};
\draw (4.5,4.5) node{\tiny{$2$}};
\draw (5.5,4.5) node{\tiny{$3{,}2_2$}};
\draw (6.5,4.5) node{\tiny{$3{,}5_2$}};
\draw (7.5,4.5) node{\tiny{$1{,}1_2$}};
\draw (5.5,5.5) node{\tiny{$2$}};
\draw (6.5,5.5) node{\tiny{$8{,}1_2$}};
\draw (7.5,5.5) node{\tiny{$7{,}3_2$}};
\draw (8.5,5.5) node{\tiny{$1{,}1_2$}};
\draw (6.5,6.5) node{\tiny{$1$}};
\draw (7.5,6.5) node{\tiny{$4{,}3_2$}};
\draw (8.5,6.5) node{\tiny{$8{,}6_2$}};
\draw (9.5,6.5) node{\tiny{$3{,}1_2$}};
\draw (7.5,7.5) node{\tiny{$3$}};
\draw (8.5,7.5) node{\tiny{$8{,}1_2$}};
\draw (9.5,7.5) node{\tiny{$4{,}6_2$}};
\draw (10.5,7.5) node{\tiny{$1{,}3_2$}};
\draw (8.5,8.5) node{\tiny{$1$}};
\draw (9.5,8.5) node{\tiny{$7{,}1_2$}};
\draw (10.5,8.5) node{\tiny{$8{,}3_2$}};
\draw (11.5,8.5) node{\tiny{$2{,}1_2$}};
\draw (9.5,9.5) node{\tiny{$1$}};
\draw (10.5,9.5) node{\tiny{$3{,}1_2$}};
\draw (11.5,9.5) node{\tiny{$3{,}5_2$}};
\draw (12.5,9.5) node{\tiny{$2{,}2_2$}};
\draw (10.5,10.5) node{\tiny{$1$}};
\draw (11.5,10.5) node{\tiny{$5$}};
\draw (12.5,10.5) node{\tiny{$3{,}2_2$}};
\draw (13.5,10.5) node{\tiny{$2_2$}};
\draw (12.5,11.5) node{\tiny{$2$}};
\draw (13.5,11.5) node{\tiny{$3{,}1_2$}};
\draw (14.5,11.5) node{\tiny{$1$}};
\draw (13.5,12.5) node{\tiny{$1$}};
\draw (14.5,12.5) node{\tiny{$1_2$}};
\draw (15.5,12.5) node{\tiny{$1_2$}};
\draw (14.5,13.5) node{\tiny{$1$}};
\draw (15.5,13.5) node{\tiny{$1$}};

\end{scope}
 
 \end{tikzpicture}$$
 \caption{The knot $10_{132}\# \overline{10_{132}}$ and its Khovanov homology.}
\label{figure:10_132}
\end{figure}

\begin{example} 
\label{example:11n376}
The link $L=L11n376$ and its Khovanov homology are depicted in Figure \ref{figure:L11n376}. The Khovanov homology of $L$ in its maximal quantum grading has rank two, and thus the link is not $B$-adequate, $B$-almost alternating, or $B$-Turaev genus one. The Khovanov homology in its minimal quantum grading has rank one. Also, $i_0(L) = -7$, $j_{\min}(L)=-19$, and $\delta_{\min}(L) = 1$. Therefore
$$2i_0(L) - j_{\min}(L) = 5 \neq 3 = \delta_{\min}(L)+2.$$
Theorem \ref{theorem:Diagonal} implies that $L$ is also not $A$-adequate, $A$-almost alternating, or $A$-Turaev genus one. Therefore $L$ is inadequate and has Turaev genus and dealternating number at least two.
\end{example}

\begin{figure}[h]
$$\begin{tikzpicture}
   
\begin{knot}[
	consider self intersections=true,
   	ignore endpoint intersections = false,
	%draft mode=crossings,
	clip width = 7,
]
\flipcrossings{5,4,9,11};
\strand [thick] (0,2) .. controls +(2.6,0) and +(120:-2.6) .. (210:2) .. controls +(120:2.6) and +(60:2.6) .. (-30:2) .. controls +(60:-2.6) and +(-2.6,0) .. (0,2);
\strand [thick] (-1,.5) circle (.7cm);
\strand [thick] (1,.5) circle (.7cm);

\end{knot}
\begin{scope}[yshift = -3cm, xshift = 4cm, scale = .55]
\draw (0,0) rectangle (9,10);
\foreach \i in {1,...,8}
{	
	\draw (\i,0) -- (\i,10);
}
\foreach \i in {1,...,9}
{
	\draw (0,\i) -- (9,\i);
}

\draw (.5,.5) node{\footnotesize{$\minus 19$}};
\draw (.5,1.5) node{\footnotesize{$\minus 17$}};
\draw (.5,2.5) node{\footnotesize{$\minus 15$}};
\draw (.5,3.5) node{\footnotesize{$\minus 13$}};
\draw (.5,4.5) node{\footnotesize{$\minus 11$}};
\draw (.5,5.5) node{\footnotesize{$\minus 9$}};
\draw (.5,6.5) node{\footnotesize{$\minus 7$}};
\draw (.5,7.5) node{\footnotesize{$\minus 5$}};
\draw (.5,8.5) node{\footnotesize{$\minus 3$}};
\draw (.5,9.5) node{\footnotesize{$j \backslash i$}};

\draw (1.5,9.5) node{\footnotesize{$\minus 7$}};
\draw (2.5,9.5) node{\footnotesize{$\minus 6$}};
\draw (3.5,9.5) node{\footnotesize{$\minus 5$}};
\draw (4.5,9.5) node{\footnotesize{$\minus 4$}};
\draw (5.5,9.5) node{\footnotesize{$\minus 3$}};
\draw (6.5,9.5) node{\footnotesize{$\minus 2$}};
\draw (7.5,9.5) node{\footnotesize{$\minus 1$}};
\draw (8.5,9.5) node{\footnotesize{$0$}};

\draw (1.5,.5) node{\tiny{$1$}};
\draw (2.5,1.5) node{\tiny{$1_2$}};
\draw (2.5,2.5) node{\tiny{$1$}};
\draw (3.5,2.5) node{\tiny{$2$}};
\draw (4.5,3.5) node{\tiny{$2{,}2_2$}};
\draw (4.5,4.5) node{\tiny{$4$}};
\draw (5.5,4.5) node{\tiny{$2{,}1_2$}};
\draw (4.5,5.5) node{\tiny{$1$}};
\draw (5.5,5.5) node{\tiny{$1$}};
\draw (6.5,5.5) node{\tiny{$1{,}2_2$}};
\draw (6.5,6.5) node{\tiny{$2$}};
\draw (7.5,6.5) node{\tiny{$1_2$}};
\draw (7.5,7.5) node{\tiny{$1$}};
\draw (8.5,7.5) node{\tiny{$2$}};
\draw (8.5,8.5) node{\tiny{$2$}};

\end{scope}

\end{tikzpicture}$$
\caption{The link L11n376 and its Khovanov homology.}
\label{figure:L11n376}
\end{figure}
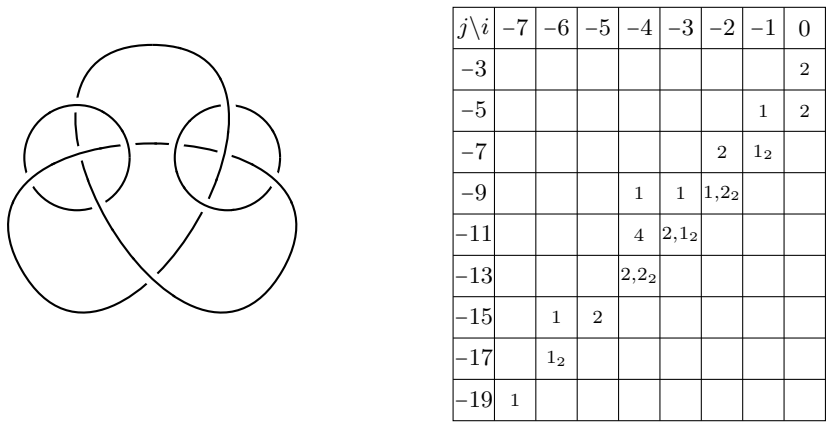

\begin{example}
\label{example:13crossings}
There are four 13 crossing knots with extremal Khovanov homology similar to that of $L11n376$. For each of these knots $Kh^{*,j_{\min}(K)}(K)$ has rank two while $Kh^{*,j_{\max}(K)}(K)$ has rank one. However, in each case $2i_0(K) - j_{\max}(K) \neq \delta_{\max}(K) -2$. Therefore, the knots listed in Table \ref{table:13}  are inadequate and have Turaev genus and dealternating number at least two.
\begin{table}[h]
\begin{tabular}{| c | c | c | c | }
\hline
Name &  $i_0$ & $j_{\max}$ & $\delta_{\max}$ \\
\hline
\hline
$13n_{588}$  & 8 & 23 & -3\\
\hline 
$13n_{1907}$ & 8 & 23 & -3\\
\hline
$13n_{2492}$ & 7 & 21 & -3\\
\hline
$13n_{2533}$ & 4 & 11 & 1\\
\hline
\end{tabular}
\vspace{5pt}
\caption{Knots with 13 crossings whose Khovanov homology does not satisfy Theorem \ref{theorem:Diagonal}. }
\label{table:13}
\end{table}
\end{example}

\section{Signature and extremal Khovanov homology}
\label{section:Signature}

In this section, we prove Theorem \ref{theorem:Signature} and conjecture a generalization of it in Conjecture \ref{conjecture:Signature}.

\begin{proof}[Proof of \ref{theorem:Signature}]
 If $L$ is a non-split alternating link with reduced diagram $D$, then Traczyk \cite{Traczyk:Signature} proved that
$$\sigma(L) = s_A(D)-c_+(D)-1 = -s_B(D) + c_-(D) + 1.$$
Dasbach and Lowrance \cite{DasLow:TuraevJones} proved that if $D$ is a Turaev genus one diagram of the link $L$, then
\begin{equation}
\label{equation:AASig}
\sigma(L) = s_A(D)-c_+(D) \pm 1.
\end{equation}

Suppose that $D$ is both $A$-Turaev genus one and $B$-Turaev genus one. Theorem \ref{theorem:Diagonal} and Corollary \ref{corollary:Diagonal} imply that both $Kh^{*,j_{\min}(L)}(L)\cong\mathbb{Z}$ and $Kh^{*,j_{\max}(L)}(L)\cong\mathbb{Z}$, and moreover $2i_0(L)-j_{\min}(L) =s_A(D)-c_+(D)=2i_0(L)-j_{\max}(L)$. Equation \ref{equation:AASig} then implies the result. 
\end{proof}

We conjecture that the hypotheses of Theorem \ref{theorem:Signature} can be weakened as follows without changing the conclusion.
\begin{conjecture}
\label{conjecture:Signature}
Suppose $L$ is a non-split Turaev genus one link. At least one of the following statements hold.
\begin{enumerate}
\item There is an $i_0\in\mathbb{Z}$ such that $Kh^{*,j_{\min}(L)}(L) = Kh^{i_0(L),j_{\min}(L)}(L)\cong \mathbb{Z}$ and $2i_0(L)-j_{\min}(L)  = \sigma(L)+1$.
\item There is an $i_0\in\mathbb{Z}$ such that $Kh^{*,j_{\max}(L)}(L) = Kh^{i_0(L),j_{\max}(L)}(L) \cong \mathbb{Z}$ and 
$2i_0(L) - j_{\max}(L)  = \sigma(L)-1$.
\end{enumerate}
\end{conjecture}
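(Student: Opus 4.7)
The plan is to combine Corollary \ref{corollary:ABgt1} with Theorem \ref{theorem:Diagonal} and the Dasbach--Lowrance signature formula from \cite{DasLow:TuraevJones}, which asserts $\sigma(L) = s_A(D) - c_+(D) \pm 1$ for any Turaev genus one diagram $D$ of $L$. A key observation, already exploited in the proof of Theorem \ref{theorem:Signature}, is that on a Turaev genus one diagram the identity $s_A(D) + s_B(D) = c(D)$ gives
\begin{equation*}
s_A(D) - c_+(D) \;=\; c_-(D) - s_B(D),
\end{equation*}
so the diagonal grading of the extremal $\mathbb{Z}$-summand produced by Theorem \ref{theorem:Diagonal} equals $s_A(D) - c_+(D)$ whether the diagram witnesses the $A$-Turaev or $B$-Turaev genus one property. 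Under this observation, the conjecture reduces to a statement about signs: every non-split Turaev genus one link $L$ should admit either an $A$-Turaev genus one diagram $D_1$ with $\sigma(L) = s_A(D_1) - c_+(D_1) - 1$ (which gives condition (1)) or a $B$-Turaev genus one diagram $D_2$ with $\sigma(L) = c_-(D_2) - s_B(D_2) + 1$ (which gives condition (2)). When $L$ has a diagram that is simultaneously $A$- and $B$-Turaev genus one, Theorem \ref{theorem:Signature} already settles the conjecture; the remaining \emph{incompatible} case is when every $A$-Turaev genus one diagram of $L$ produces the $+1$ sign, every $B$-Turaev genus one diagram produces the $-1$ sign, and no single diagram is of both types.

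My plan is to rule out this incompatible configuration by induction on crossing number, reducing via Kim's Theorem \ref{theorem:TuraevInadequate} to the $A$-adequate, $B$-adequate, or almost alternating cases. In the adequate cases, I would compute $\sigma(L)$ via the Gordon--Litherland pairing on the appropriate checkerboard surface; adequacy of one colour should force the sign of the non-orientability correction to be the one compatible with the extremal $\mathbb{Z}$-summand produced by Theorem \ref{theorem:Diagonal}. In the almost alternating case, I would resolve crossings in the alternating tangle $R$ inductively, paralleling the proof of Theorem \ref{theorem:AAKh}: each resolution step produces a smaller almost alternating or alternating diagram whose signature relates to that of $L$ by Murasugi's skein jump formula, with the base case (Figure \ref{figure:basecase}) settled by Traczyk's formula. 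The main obstacle will be to pin down precisely which of the jumps $\sigma(L_A) - \sigma(L_B) \in \{0, \pm 1, \pm 2\}$ occurs in terms of the $A$-/$B$-Turaev genus one status of the resolved diagrams, since Murasugi's formula by itself does not record this information; a finer Seifert-pairing or state-sum analysis linking the resolution type to the change in signature will be required.

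A complementary line of attack is through the Lee--Rasmussen spectral sequence of Theorem \ref{theorem:KnightMove}, whose $\mathbb{Q}$-survivors have $\delta$-grading determined by the Rasmussen $s$-invariant. Using that the Khovanov homology of a Turaev genus one link is supported on at most three adjacent diagonals together with the bound $|s(L)+\sigma(L)| \leq 2g_T(L)$ from \cite{DasLow:Concordance}, one should be able to constrain the $\delta$-grading of each Lee survivor to lie in $\{-\sigma(L)-1, -\sigma(L), -\sigma(L)+1\}$. In the extremal gradings, combining this with the knight-move pairing should be sharp enough to identify the sign in Equation \ref{equation:AASig} with the side ($A$ or $B$) predicted by Theorem \ref{theorem:Diagonal}. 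The difficulty here is again the middle value $s(L)+\sigma(L)=0$, which the Lee spectral sequence alone does not forbid and which would still require an independent diagrammatic argument of the form outlined above to exclude.
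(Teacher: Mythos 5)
This statement is Conjecture \ref{conjecture:Signature}: the paper does not prove it, and explicitly records that it ``is open for links that are $A$-Turaev genus one, but not $B$-Turaev genus one (and vice versa).'' The only proved result in this direction is Theorem \ref{theorem:Signature}, which requires the link to be both $A$- and $B$-Turaev genus one, precisely so that the two extremal $\mathbb{Z}$-summands sit on the \emph{same} diagonal $s_A(D)-c_+(D)$ and Equation \ref{equation:AASig} forces one of the two signs to work. Your opening reduction is correct and is essentially the same observation the authors make: using $s_A(D)+s_B(D)=c(D)$, Theorem \ref{theorem:Diagonal}, and $\sigma(L)=s_A(D)-c_+(D)\pm 1$, the conjecture becomes the assertion that the sign in Equation \ref{equation:AASig} can always be matched to the side ($A$ or $B$) on which Theorem \ref{theorem:Diagonal} applies.

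However, what you have written after that reduction is a research plan, not a proof, and you yourself flag the decisive gaps. The inductive resolution of crossings in $R$ stalls exactly where you say it does: Murasugi's skein inequality only constrains $\sigma$ up to a jump in $\{0,\pm 1,\pm 2\}$ and does not tie the jump to the $A$-/$B$-Turaev status of the resolved diagrams, so the induction cannot close. The Gordon--Litherland computation for the adequate case is asserted, not carried out; adequacy of one checkerboard colour does not by itself determine the sign of the correction term $\mu$ in $\sigma = \operatorname{sig}(G_F) - \mu$. And the Lee--Rasmussen argument cannot exclude the middle case $s(L)+\sigma(L)=0$, which is exactly the configuration in which both signs of Equation \ref{equation:AASig} remain possible. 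Each of these is the whole difficulty of the conjecture, not a technical loose end, so the proposal does not constitute a proof; it is an (accurately diagnosed) outline of why the problem is hard. Note also one small imprecision: even when condition (1) fails for an $A$-Turaev genus one link, condition (2) could in principle hold for reasons invisible to Theorem \ref{theorem:Diagonal} (the extremal group at $j_{\max}$ being $\mathbb{Z}$ without $L$ being $B$-Turaev genus one), so your ``incompatible case'' is slightly narrower than the actual obstruction.
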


Conjecture \ref{conjecture:Signature} is open for links that are $A$-Turaev genus one, but not $B$-Turaev genus one (and vice versa). All knots with twelve or fewer crossings that are known to be Turaev genus one satisfy the conjecture. We end the section with an example that would not be Turaev genus one (hence, not almost alternating) assuming that Conjecture \ref{conjecture:Signature} is true.

\begin{example}
The 12 crossing non-alternating knots $12n_{809}$ and $12n_{835}$ have signatures $\sigma(12n_{809})=-2$ and $\sigma(12n_{835})=0$. Their Khovanov homologies are rank two in the maximum quantum grading and rank one in the minimum quantum grading. We have that $i_0(12n_{809})= -4$ and $j_{\min}(12n_{809})=-5$, and thus $2i_0(12n_{809})-j_{\min}(12n_{809}) \neq \sigma(12n_{809})+1$. Similarly, $i_0(12n_{835}) = -7$ and $j_{\min}(12n_{835}) = -13$, and thus $2i_0(12n_{835}) - j_{\min}(12n_{835}) \neq \sigma(12n_{835})+1$. Therefore, if Conjecture \ref{conjecture:Signature} holds, the dealternating number and Turaev genus of both $12n_{809}$ and $12n_{835}$ are at least two.
\end{example}

\section{The maximal Thurston Bennequin number of an almost alternating link}
\label{section:TB}

A Legendrian link in $\mathbb{R}^3$ with the standard contact structure $dz - y \; dx$ projects in the $xz$-plane to a Legendrian front diagram (or simply a front). A front has no vertical tangents, and its singular points are transverse double points and cusps (in place of a vertical tangents). A double point $~
\tikz[baseline=.6ex, scale = .4]{
\draw (0,0) -- (1,1);
\draw (1,0) -- (0,1);
}
~$ in a front diagram should be interpreted as a crossing $~
\tikz[baseline=.6ex, scale = .4]{
\draw (0,0) -- (.3,.3);
\draw (.7,.7) -- (1,1);
\draw (1,0) -- (0,1);
}
~$ where the segment with negative slope passes over the segment with positive slope.

The \textit{Thurston Bennequin number} $\tb(\mathcal{L})$ of an oriented Legendrian link $\mathcal{L}$ with front diagram $F$ is defined as $\tb(\mathcal{L}) = w(F) - c(F)$ where $w(F)$ is the writhe of the front $F$ and $c(F)$ is half the number of cusps in $F$. Ng \cite{Ng:TBKh} proved that the maximal Thurston Bennequin number of a link has the following upper bound given by Khovanov homology.
\begin{theorem}[Ng]
\label{theorem:tbkh}
Let $L$ be a non-split link. Then
$$\mtb(L) \leq \min \{j-i \mid Kh^{i,j}(L)\neq 0\}.$$
\end{theorem}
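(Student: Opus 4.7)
The plan is to prove the support statement that, for every Legendrian representative $\mathcal{L}$ of $L$, $Kh^{i,j}(L) = 0$ whenever $j - i < \tb(\mathcal{L})$. Choosing $\mathcal{L}$ to realize $\mtb(L)$ then yields $\mtb(L) \leq \min\{j - i : Kh^{i,j}(L) \neq 0\}$. Equivalently, every nonzero Khovanov class must satisfy $j - i \geq \tb(\mathcal{L})$ for every Legendrian representative.

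Given $\mathcal{L}$ with front diagram $F$, I would first associate to $F$ a link diagram $D$ of $L$ via the standard procedure: rotate $F$ slightly so that each cusp becomes a smooth bend, and reinterpret each double point as a crossing with the negatively-sloped strand passing over the positively-sloped strand. This gives $w(D) = w(F)$ and, with the standard cusp/crossing conventions, the identity $\tb(\mathcal{L}) = w(D) - c(F)$, where $c(F)$ is the number of right cusps of $F$. By Equation \ref{equation:shift}, the shifted bigrading of an enhanced state $S$ of $D$ satisfies $j(S) - i(S) = \theta(S) + w(D)$, and since $\theta(S) \geq -|S|$, the chain-level generators with $j - i < \tb(\mathcal{L})$ correspond to Kauffman states with $|S| > c(F)$. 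Such generators do generally exist at the chain level (as the trefoil example quickly shows), so the support bound must be achieved by cancellation in homology rather than dimension counting in $CKh(D)$.

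To produce the cancellation, I would induct on the number of double points of $F$. The base case has no double points, so $D$ is a disjoint union of $c(F)$ circles, $L$ is an unlink, and its Khovanov homology is explicit and verifies the bound by direct computation. For the inductive step, fix a crossing of $D$ corresponding to a double point of $F$ and apply the long exact sequence of Theorem \ref{theorem:LES} to its $A$- and $B$-resolutions $D_A$ and $D_B$. Each of $D_A$ and $D_B$ arises from a front $F'$ with one fewer double point, obtained by either deleting the double point and replacing it with two horizontal arcs or by inserting a pair of cusps; the corresponding Legendrian links have Thurston-Bennequin numbers that differ from $\tb(\mathcal{L})$ by controlled amounts depending on the sign of the double point and the resolution taken. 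Tracking the quantum-grading shift in the long exact sequence together with the inductive support bounds on $Kh(D_A)$ and $Kh(D_B)$ should force $Kh(L)$ to be supported in $j - i \geq \tb(\mathcal{L})$.

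The main obstacle is the Legendrian bookkeeping: resolving a double point of $F$ does not always correspond to a Legendrian destabilization, so the two resolved diagrams $D_A$ and $D_B$ need not both inherit natural Legendrian structures with clean $\tb$ formulas. The inductive hypothesis must therefore be strengthened to an assertion about pairs consisting of a link diagram with a distinguished crossing and an associated Thurston-Bennequin-type invariant that behaves well under both resolutions. An alternative, potentially more conceptual route would use Plamenevskaya's transverse invariant $\psi(T) \in Kh(T)$ associated to the transverse push-off $T$ of $\mathcal{L}$, whose bigrading location and naturality under negative stabilization encode the Thurston-Bennequin bound at the level of a distinguished homology class, bypassing the need for explicit differential cancellations.
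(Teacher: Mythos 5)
First, a structural remark: the paper does not prove this statement at all. Theorem \ref{theorem:tbkh} is imported from Ng's paper \cite{Ng:TBKh} and used as a black box, so there is no internal argument to compare yours against; your attempt has to stand on its own. Its setup is sound: you correctly reduce the theorem to the support statement $Kh^{i,j}(L)=0$ for $j-i<\tb(\mathcal{L})$, correctly compute $j(S)-i(S)=\theta(S)+w(D)$ from Equation \ref{equation:shift}, and correctly observe that chain-level generators below the bound do exist, so the theorem is genuinely a statement about cancellation in homology. But the proof stops exactly where the difficulty begins, and the induction you propose does not close. Resolving a double point of the front in the way that replaces it by a left-cusp/right-cusp pair produces the diagram of a front with $c+1$ right cusps, so the inductive hypothesis for that resolved diagram only gives vanishing for $j-i<-(c+1)$ (in unshifted gradings, where $f_*$ and $g_*$ in Theorem \ref{theorem:LES} both preserve $j-i$ and $\partial$ lowers it by one). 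At the critical degree $j-i=-c-1$ the long exact sequence then only exhibits $\underline{Kh}(D)$ as a quotient of (or a subgroup of, depending on which resolution carries the extra cusps) a group that need not vanish. Closing this one-degree gap is the actual content of Ng's proof; it requires a further combinatorial input about the states realizing the extremal count $|s|=c+b(s)$ and the differentials among the corresponding all-$x$ generators. You acknowledge that the inductive hypothesis "must be strengthened" but do not say to what, so what you have is a plan with its key step missing rather than a proof.

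The fallback you suggest via Plamenevskaya's transverse invariant does not repair this, because it points in the wrong direction: $\psi(T)$ lives in a single bigrading determined by $sl(T)$, so its nonvanishing would show that $\min\{j-i \mid Kh^{i,j}(L)\neq 0\}$ is at most an $sl$-type quantity, whereas the theorem requires that this minimum be at least $\tb(\mathcal{L})$; moreover $\psi(T)$ can vanish. A support (vanishing) bound cannot be extracted from the existence of one distinguished class, so this route would need an entirely different mechanism than the one you describe.
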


Ng used the bound in Theorem \ref{theorem:tbkh} to prove the following result that we slightly reformulate to match the notation of this paper.
\begin{theorem}[Ng]
\label{theorem:tbalt}
Let $L$ be a non-split alternating link with reduced alternating diagram $D$. Then
$$\mtb(L) =  w(D) - s_A(D)$$
where $w(D)$ is the writhe of $D$ and $s_A(D)$ is the number of components in the all-$A$ state of $D$.
\end{theorem}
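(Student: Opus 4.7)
The plan is to prove the two inequalities $\mtb(L) \leq w(D) - s_A(D)$ and $\mtb(L) \geq w(D) - s_A(D)$ separately. For the upper bound I would apply Theorem~\ref{theorem:tbkh}, which asserts $\mtb(L) \leq \min\{j - i : Kh^{i,j}(L) \neq 0\}$. Since the reduced alternating diagram $D$ is simultaneously $A$- and $B$-adequate, Theorem~\ref{theorem:ExtAdeq} together with the grading shift of Equation~\ref{equation:shift} identifies $Kh^{-c_-(D),\, c_+(D) - 2c_-(D) - s_A(D)}(L) \cong \mathbb{Z}$ as the unique nontrivial summand in the minimum quantum grading, at which
\[
j - i = \bigl(c_+(D) - 2c_-(D) - s_A(D)\bigr) + c_-(D) = w(D) - s_A(D).
\]
To confirm this is the global minimum of $j - i$, I would invoke Lee's theorem that a non-split alternating link has Khovanov homology supported on two adjacent $\delta$-diagonals, where $\delta = 2i - j$. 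Theorem~\ref{theorem:Semi} places the extremal summand on the larger of the two diagonals, and since $Kh^{i,j}(L)$ is always supported in homological gradings $-c_-(D) \leq i \leq c_+(D)$ (coming from the unshifted range $0 \le i + c_-(D) \le c(D)$), the smallest $i$ appearing on either diagonal is at least $-c_-(D)$. On the smaller diagonal this forces $j - i \geq w(D) - s_A(D) + 2$, so the minimum of $j - i$ is attained at the extremal summand itself, and Theorem~\ref{theorem:tbkh} yields $\mtb(L) \leq w(D) - s_A(D)$.

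For the matching lower bound I would construct an explicit Legendrian representative $\mathcal{L}$ of $L$ with $\tb(\mathcal{L}) = w(D) - s_A(D)$, starting from $D$ itself. Smoothing every crossing of $D$ according to the all-$A$ resolution produces $s_A(D)$ pairwise disjoint planar circles, each of which can be realized as a standard Legendrian unknot front with exactly two cusps, so the resulting front carries $2 s_A(D)$ cusps in total. Near each of the $c(D)$ former smoothing sites the two adjacent arcs are then arranged to cross transversely with slopes of opposite sign, yielding a Legendrian double point whose over/under data agrees with that of the original crossing in $D$. The resulting front $F$ has writhe $w(F) = w(D)$ and cusp count $c(F) = 2 s_A(D)$, so $\tb(\mathcal{L}) = w(F) - c(F)/2 = w(D) - s_A(D)$.

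The main obstacle will be executing the lower-bound construction rigorously: one must verify that the crossings can be reinserted at the all-$A$ smoothing sites as Legendrian double points without introducing additional cusps and without altering the underlying topological link type. The alternating condition is crucial here, because at each crossing the over/under information is compatible with the natural Legendrian crossing model determined by the slopes of the two strands, allowing a uniform local model to be glued together consistently across the diagram. Once the construction is in hand, combining it with the Khovanov upper bound immediately gives the claimed equality $\mtb(L) = w(D) - s_A(D)$.
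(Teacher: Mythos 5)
Your upper-bound argument is correct and is essentially Ng's: the reduced alternating diagram is $A$- and $B$-adequate, Theorem~\ref{theorem:ExtAdeq} and Equation~\ref{equation:shift} locate the $\mathbb{Z}$-summand at $(i,j)=(-c_-(D),\,c_+(D)-2c_-(D)-s_A(D))$, and the two-diagonal support together with $i\geq -c_-(D)$ shows this summand realizes $\min\{j-i\}=w(D)-s_A(D)$, so Theorem~\ref{theorem:tbkh} gives $\mtb(L)\leq w(D)-s_A(D)$. The numerics all check out.

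The lower bound, however, has a genuine gap at exactly the point you flag as ``the main obstacle,'' and that obstacle is the entire content of the hard part of Ng's proof. In a Legendrian front the over/under data at a double point is \emph{forced} by the slopes: the negative-slope strand must pass over. So once you decide to realize each of the $s_A(D)$ all-$A$ circles as a two-cusped front, you have no freedom at the crossings --- you must exhibit a planar arrangement of $s_A(D)$ disjoint two-cusped ``flying saucers'' in which, at every one of the $c(D)$ crossing sites, the two relevant arcs meet with opposite slopes and the slope convention reproduces the alternating over/under pattern. The all-$A$ circles as they sit in the original diagram can be nested and highly non-convex; flattening each to a two-cusped front is a global rearrangement, and it is not automatic that all crossings can then be reinserted legally. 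Your appeal to ``a uniform local model glued consistently'' addresses the local picture but not this global arrangement problem. Ng resolves it with Mondrian diagrams: he proves that every planar graph (in particular the checkerboard graph of $D$, whose vertices are exactly the $s_A(D)$ all-$A$ circles) is the contraction of a Mondrian diagram, whose horizontal segments become the two-cusped unknots and whose vertical segments become the crossings, with the slope convention automatically matching the alternating pattern. Without that combinatorial lemma (or a substitute for it), your construction of the front $F$ with $2s_A(D)$ cusps and writhe $w(D)$ is not justified, so the inequality $\mtb(L)\geq w(D)-s_A(D)$ remains unproved.
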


K\'alm\'an \cite{Kalman:TB} extended Ng's result to $A$- and $B$-adequate links by proving the following result.
\begin{theorem}[K\'alm\'an]
\label{theorem:Kalman}
Let $L$ be a link with diagram $D$. If $D$ is $A$-adequate, then
$$\mtb(L) = w(D) - s_A(D).$$
If $D$ is $B$-adequate, then
$$\mtb(\overline{L}) = -w(D) - s_B(D).$$
\end{theorem}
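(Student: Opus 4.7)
Proof plan for Theorem \ref{theorem:Kalman}. My plan is to handle the $A$-adequate case first by proving matching upper and lower bounds, and then to deduce the $B$-adequate case by mirroring.

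\textbf{Upper bound via Khovanov homology.} Suppose $D$ is an $A$-adequate diagram of the link $L$. Theorem \ref{theorem:ExtAdeq} gives $\underline{Kh}^{0,-s_A(D)}(D)\cong\mathbb{Z}$, and after applying the grading shift in Equation \ref{equation:shift}, the corresponding summand in the Khovanov homology of $L$ is
$$Kh^{-c_-(D),\, c_+(D)-2c_-(D)-s_A(D)}(L)\cong\mathbb{Z}.$$
For this bigrading, $j-i=c_+(D)-c_-(D)-s_A(D)=w(D)-s_A(D)$. Ng's Khovanov bound (Theorem \ref{theorem:tbkh}) then immediately yields
$$\mtb(L)\;\leq\;w(D)-s_A(D).$$

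\textbf{Lower bound via explicit Legendrian construction.} To match the upper bound, I would exhibit a Legendrian representative $\mathcal{L}$ of $L$ with $\tb(\mathcal{L})=w(D)-s_A(D)$. The plan is to build a Legendrian front $F$ using the all-$A$ state of $D$ as a scaffold. The all-$A$ state $D_A$ is a disjoint union of $s_A(D)$ simple closed curves in the plane; first isotope each such circle to the front projection of a Legendrian unknot with exactly one left cusp and one right cusp, producing a Legendrian $s_A(D)$-component unlink with $2s_A(D)$ cusps and writhe zero. Next, reinsert each crossing of $D$ by locally replacing the $A$-resolution $\tikz[baseline=.6ex, scale=.4]{\draw[rounded corners=1mm] (0,0)--(.5,.45)--(1,0);\draw[rounded corners=1mm] (0,1)--(.5,.55)--(1,1);}$ with a transverse double point whose two strands carry opposite-sign slopes, chosen so that the resulting Legendrian crossing has the same sign as the crossing in $D$. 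The resulting front $F$ satisfies $w(F)=w(D)$ and has $s_A(D)$ left cusps, so
$$\tb(\mathcal{L})\;=\;w(F)-(\text{left cusps of }F)\;=\;w(D)-s_A(D),$$
establishing $\mtb(L)\geq w(D)-s_A(D)$ and hence equality.

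\textbf{Deducing the $B$-adequate case.} If $D$ is a $B$-adequate diagram of $L$, then the mirror diagram $\overline{D}$ (switching over and under at every crossing) is an $A$-adequate diagram of $\overline{L}$, since the all-$A$ state of $\overline{D}$ coincides with the all-$B$ state of $D$; in particular $s_A(\overline{D})=s_B(D)$ and $w(\overline{D})=-w(D)$. Applying the $A$-adequate case to $\overline{D}$ then gives $\mtb(\overline{L})=w(\overline{D})-s_A(\overline{D})=-w(D)-s_B(D)$, completing the proof.

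\textbf{Main obstacle.} The technical heart is the lower bound construction: one must verify that at every crossing of $D$ the local slopes and orientations of the two $A$-arcs passing through that crossing can be chosen so that the Legendrian crossing inserted there has sign matching the planar crossing in $D$, without forcing additional critical points of the $x$-coordinate that would add cusps and reduce $\tb$. The $A$-adequacy of $D$, which prevents the two arcs meeting at any crossing from lying on the same component of $D_A$, is precisely the property that permits these local choices to be made independently at every crossing, so that the total cusp count can be held at $2s_A(D)$ and the writhe at $w(D)$ simultaneously.
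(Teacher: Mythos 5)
The paper does not actually prove this statement; it quotes it from K\'alm\'an \cite{Kalman:TB} and only indicates the strategy (Mondrian diagrams built from a checkerboard graph, as in Ng's Proposition 11 of \cite{Ng:TBKh} and Lemma \ref{lemma:Mondrian}). Your upper bound is correct and matches the computation the paper performs elsewhere: Theorem \ref{theorem:ExtAdeq} together with Equation \ref{equation:shift} places a $\mathbb{Z}$ summand in bidegree $(i,j)=(-c_-(D),\,c_+(D)-2c_-(D)-s_A(D))$, for which $j-i=w(D)-s_A(D)$, and Theorem \ref{theorem:tbkh} then gives $\mtb(L)\leq w(D)-s_A(D)$. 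The reduction of the $B$-adequate case to the $A$-adequate case by mirroring is also fine.

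The genuine gap is in the lower bound, at exactly the step you flag as the ``main obstacle.'' In a front diagram the over-strand at a double point is \emph{forced} by the slopes (the negatively sloped segment passes over the positively sloped one); it is not a choice you get to make crossing by crossing. Once each all-$A$ state circle is flattened to a two-cusped front, its cusps divide it into an upper and a lower strand, and a crossing of $D$ can be reinserted with the correct over/under only if the two arcs of its $A$-resolution can be placed one locally above the other, with the correct circle on top and with neither arc forced past a cusp. Whether \emph{all} crossings of $D$ can be realized this way simultaneously is a global combinatorial condition on the nested configuration of state circles and on the choice of cusp locations, not a collection of independent local conditions. $A$-adequacy does not supply this: it only guarantees that no crossing joins a state circle to itself, which is necessary but nowhere near sufficient. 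This global realizability is precisely what the Mondrian-diagram machinery is built to establish (Ng's Proposition 11, reused in Lemma \ref{lemma:Mondrian}, is the nontrivial combinatorial input for the alternating case, and K\'alm\'an's extension to adequate diagrams requires a correspondingly careful arrangement of the state circles). As written, your lower bound asserts the conclusion of that construction rather than proving it, so the argument is incomplete at its technical core.
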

The proofs of Theorems \ref{theorem:tbalt} and \ref{theorem:Kalman} use \textit{Mondrian diagrams}, diagrams consisting of a set of thick disjoint horizontal line segments and a set of thin disjoint vertical line segments such that each vertical segment begins and ends on a horizontal segment and does not intersect any other horizontal segment. Each Mondrian diagram yields a graph embedded in the plane by contracting the horizontal segments into vertices. Ng proved that every plane graph is the contraction of a Mondrian diagram. 

We modify Ng's approach to prove Theorem \ref{theorem:TBAA}. Informally, we start with an almost alternating diagram and then construct its checkerboard graph. The vertices of the checkerboard graph are then stretched horizontally to form the thick line segments of a Mondrian diagram. Finally, a Legendrian front diagram is obtained from a Mondrian diagram by replacing thick horizontal line segments with two-cusped unknots and replacing thin vertical line segments with crossings, as in Figure \ref{figure:M2L}. The crossing associated to the dealternator will need to be modified according to Figure \ref{figure:MondrianAA} in order to obtain a Legendrian front diagram. The resulting Legendrian front diagram has the same checkerboard graph as the original almost alternating diagram, and so the two are isotopic on the projection $S^2$. Figure \ref{figure:T34} shows this process for an almost alternating diagram of the $(3,4)$ torus knot.

\begin{figure}[h]
$$\begin{tikzpicture}

\draw [line width = .1 cm] (0,0) -- (2,0);

\draw[ultra thick, ->] (2.5,0) -- (3.5,0);

\draw (4,0) to [out = 0, in = 180] (4.5,.25) to (5.5,.25) to [out=0, in = 180] (6,0) to [out=180, in=0] (5.5,-.25) to (4.5,-.25) to [out =180, in=0] (4,0);

\draw[thick] (8,.5) -- (8,-.5);
\draw[ultra thick, ->] (8.5,0) -- (9.5,0);
\draw[thick] (10,.5) -- (11,-.5);
\draw[thick] (10,-.5) -- (10.3,-.2);
\draw[thick] (10.7,.2) -- (11,.5);

\end{tikzpicture}$$
\caption{Turning a Mondrian diagram into a Legendrian front diagram.}
\label{figure:M2L}
\end{figure}
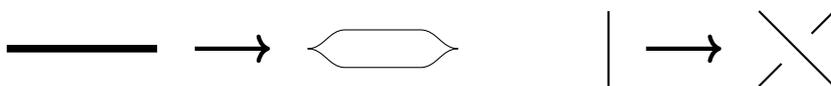

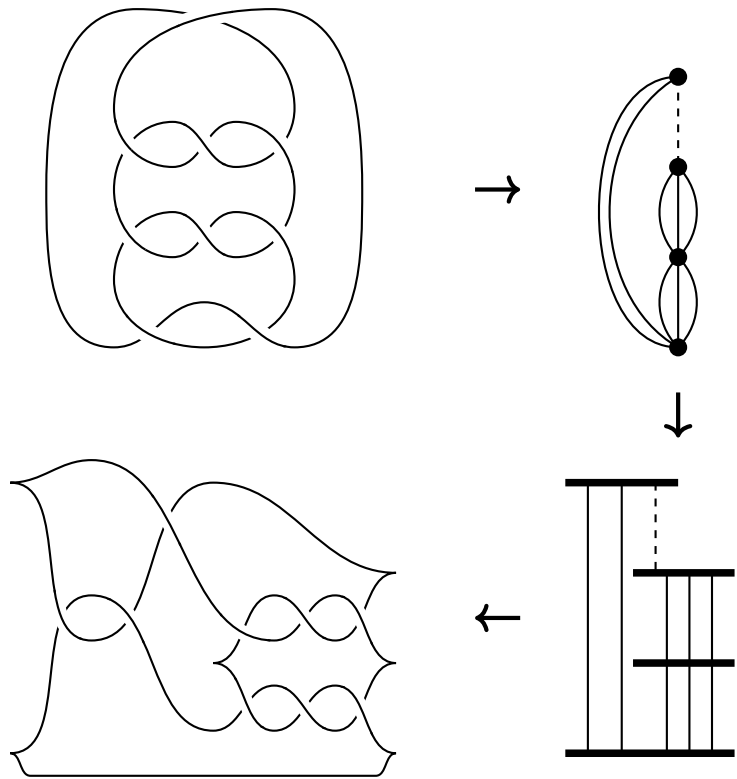
\begin{figure}[h]

$$\begin{tikzpicture}[thick, scale = .6]

% Arrows
\draw[ultra thick, ->] (7.5,3.5) -- (8.5,3.5);
\draw[ultra thick, ->] (8.5,-6) -- (7.5,-6);
\draw[ultra thick, ->] (12,-1) -- (12,-2);

%T(3,4) almost alternating diagram

\begin{knot}[
 	%draft mode = crossings,
	consider self intersections,
 	clip width = 3,
 	ignore endpoint intersections = true,
	end tolerance = 2pt
	]
	\flipcrossings{5, 7, 9, 1}
	\strand (-2,3.5) to [out = 270, in = 180]
	(-.5,0) to [out = 0, in = 180]
	(1.5,1) to [out = 0, in =180]
	(3.5,0) to [out =0, in=270]
	(5,3.5) to [out = 90, in = 0]
	(3,7.5) to [out = 180, in = 90]
	(-.5,5.3) to [out =270, in = 180]
	(.8,4) to [out = 0, in = 180]
	(2.2,5) to [out = 0, in = 90]
	(3.5,3.5) to [out = 270, in = 0]
	(2.2,2) to [out = 180, in = 0]
	(.8,3) to [out = 180, in = 90]
	(-.5,1.5) to [out = 270, in = 180]
	(1.5,0) to [out =0, in =270]
	(3.5,1.5) to [out = 90, in= 0]
	(2.2,3) to [out =180, in = 0]
	(.8,2) to [out =180, in = 270]
	(-.5,3.5) to [out = 90, in =180]
	(.8,5) to [out = 0, in = 180]
	(2.2,4) to [out = 0, in = 270]
	(3.5,5.3) to [out = 90, in = 0]
	(0,7.5) to [out = 180, in = 90]
	(-2,3.5);
	
	\end{knot}

% Checkerboard graph

\begin{scope}[xshift = 10 cm]

\fill (2,6) circle (.2cm);
\fill (2,4) circle (.2cm);
\fill (2,2) circle (.2cm);
\fill (2,0) circle (.2cm);

\draw[thick, dashed] (2,6) -- (2,4);
\draw[thick] (2,4) -- (2,0);
\draw[thick] (2,4) to [out = 225, in = 135] (2,2);
\draw[thick] (2,4) to [out = -45, in = 45] (2,2);
\draw[thick] (2,2) to [out = 225, in = 135] (2,0);
\draw[thick] (2,2) to [out = -45, in = 45] (2,0);
\draw[thick] (2,0) to [out = 150, in = 210] (2,6);
\draw[thick] (2,0) to [out = 180, in = 180] (2,6);

\end{scope}

%Mondrian diagram

\begin{scope}[xshift = 9cm, yshift = -9cm]

\draw[line width = .1cm] (.5,6) -- (3,6);
\draw[line width = .1cm] (2,4) -- (4.25,4);
\draw[line width = .1cm] (2,2) -- (4.25,2);
\draw [line width = .1cm] (.5,0) -- (4.25,0);

\draw (1,6) -- (1,0);
\draw[dashed] (2.5,6) -- (2.5,4);
\draw (2.75,4) -- (2.75,0);

\draw (3.25,4) -- (3.25,0);
\draw (3.75,4) -- (3.75,0);
\draw (1.75,6) -- (1.75,0);

\end{scope}

%Legendrian diagram

\begin{scope}[yshift = -9cm, xshift = -1cm, xscale = .9]

\begin{knot}[
 	%draft mode = crossings,
	consider self intersections,
 	clip width = 7,
 	ignore endpoint intersections = true,
	]
	\flipcrossings{2, 3, 4, 6, 8};
	\strand[thick]
	(-2,6) to [out = 0 , in = 180]
	(0,2.5) to [out = 0, in =180]
	(3,6) to [out = 0, in =180]
	(7.5,4) to [out = 180, in = 0]
	(6,2.5) to [out = 180, in = 0]
	(4.5,3.5) to [out = 180, in =0]
	(3,2) to [out = 0, in=180]
	(4.5,.5) to [out = 0, in =180]
	(6,1.5) to [out = 0, in =180]
	(7.5,0) to [out = 180, in = 0]
	(7,-.5) to [out = 180, in = 0]
	(-1.5,-.5) to [out = 180, in =0]
	(-2,0) to [out = 0, in = 180]
	(0,3.5) to [out = 0, in =180]
	(3,.5) to [out = 0, in = 180]
	(4.5,1.5) to [out =0, in = 180]
	(6,.5) to [out = 0, in =180]
	(7.5,2) to [out = 180, in=0]
	(6,3.5) to [out = 180, in = 0]
	(4.5,2.5) to [out = 180, in = 0] 
	(0,6.5) to [out = 180, in = 0]
	(-2,6);

\end{knot}
%\draw (2.5,.25) to [out = 180, in = 0, looseness = .5] (0,6);

\end{scope}

\end{tikzpicture}$$
\caption{An almost alternating diagram of the $(3,4)$ torus knot, its checkerboard graph, Mondrian diagram, and Legendrian front diagram.}
\label{figure:T34}
\end{figure}

In order to show the process of Figure \ref{figure:T34} can be carried out for an arbitrary almost alternating diagram, we need a lemma about the structure of Mondrian diagrams for almost alternating links. It may be helpful for the reader to consult Ng's proof of Proposition 11 in \cite{Ng:TBKh} where he shows that every planar graph is the contraction of a Mondrian diagram.

\begin{lemma}
\label{lemma:Mondrian}
Let $L$ be an almost alternating link. There exists an almost alternating diagram $D$ of $L$ and a Mondrian diagram contracting to the checkerboard graph of $D$ such that the vertical edge associated to the dealternator is the leftmost edge on the top of the horizontal segment containing its lower endpoint and is the rightmost edge on the bottom of the horizontal segment containing its upper endpoint. 
\end{lemma}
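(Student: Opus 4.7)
The plan is to adapt Ng's inductive procedure for constructing a Mondrian diagram of a plane graph (\cite{Ng:TBKh}, Proposition 11) to the checkerboard graph of an almost alternating diagram, using the structural constraints near the dealternator to force the dealternator edge into the prescribed extremal position. By Theorem \ref{theorem:ABaa}, I may assume $D$ is $A$-almost alternating; the $B$-almost alternating case is handled by a symmetric argument after passing to the mirror. Let $G$ be the checkerboard graph of $D$ containing $u_1$ and $u_2$, and let $e_c$ be the edge of $G$ coming from the dealternator crossing.

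The key structural observation is that conditions (2) and (3A) of Definition \ref{definition:ABalmostalternating} together imply that $e_c$ is the unique edge of $G$ between $u_1$ and $u_2$ and that $u_1$ and $u_2$ share no common neighbor in $G$. Consequently, each of the two faces of $G$ whose boundary contains $e_c$ is bounded by a closed walk of length at least four through $e_c$, and no other edge incident to $u_1$ or $u_2$ lies on the boundary of either of these two faces in the immediate vicinity of $e_c$. By performing an isotopy of $D$ on $S^2$, which preserves the almost alternating structure, I can arrange that one of these two faces becomes the unbounded face $F_\infty$ of the planar embedding of $G$, oriented so that $F_\infty$ sits to the left of $e_c$ with $u_1$ above $u_2$.

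I would then construct the Mondrian diagram by seeding Ng's induction: place $u_2$'s horizontal segment at the bottom and $u_1$'s directly above, joined by the single vertical segment representing $e_c$, with both horizontal segments extending only to the right of $e_c$. Since the open half-plane to the left of $e_c$ corresponds to the face $F_\infty$, no other vertices or edges of $G$ need to be placed there; Ng's construction applied to $G - e_c$, naturally embedded in the closed disk $S^2 \setminus F_\infty$ with $u_1$ and $u_2$ already on its outer boundary, produces the remaining horizontal and vertical segments entirely to the right of $e_c$. By construction, $e_c$ is then the leftmost edge on the top of $u_2$'s segment and the rightmost edge on the bottom of $u_1$'s segment, as required.

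The main obstacle is verifying that Ng's procedure can be initialized with the two outer vertices $u_1, u_2$ fixed in this specific way. Ng's original argument starts from a single outer face and proceeds inward through a peeling-off process; here I need to translate it to the setting of a plane graph embedded in a disk with two boundary vertices in prescribed positions on the outer boundary. This is the step that requires the most care. It reduces to the observation that $u_1$ and $u_2$ are both incident to $F_\infty$ (so they lie on the outer boundary of the disk), that their cyclic positions on that boundary are unconstrained by $G - e_c$, and that Ng's induction at each stage allows the horizontal segment being added to be placed in any column of the current partial diagram—so no later placement can force an edge into the forbidden left-of-$e_c$ region above $u_2$ or right-of-$e_c$ region below $u_1$.
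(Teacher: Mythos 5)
Your overall strategy is the paper's: invoke Ng's Proposition 11 and seed his inductive construction so that the dealternator edge is forced into an extremal position. But the specific initial configuration you choose creates a verification burden that you explicitly defer and that, as described, appears not to go through. You place $u_1$'s segment directly above $u_2$'s with the dealternator edge $e_c$ at the \emph{left} end of both, and build the rest of $G$ to the right. For $e_c$ to be the \emph{rightmost} edge on the bottom of $u_1$'s segment, no other vertical edge may attach to the bottom of that segment at all; yet every neighbor of $u_1$ other than $u_2$ that lies in the interior of the chosen outer face sits, in your layout, in the region between the two segments and to the right of $e_c$, so its edge to $u_1$ must attach to the bottom of $u_1$'s segment — exactly the forbidden position. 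Your closing paragraph gestures at flexibility in Ng's induction but does not resolve this; it is the crux of the lemma, not a routine check. The paper sidesteps the problem by seeding with the entire boundary cycle of a face containing $e_c$, realized as a step-shaped cycle in which $u_1$ and $u_2$ are consecutive steps: there $e_c$ attaches at the \emph{right} end of the bottom of $u_1$'s segment and the \emph{left} end of the top of $u_2$'s segment, the interior of the cycle lies below $u_2$'s step, and both extremality conditions then hold automatically once all new horizontal segments are placed below segment $u_2$.

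Two further points. First, you omit the case where the checkerboard graph is not $2$-connected (composite $L$); Ng's algorithm as used here is seeded on a face cycle of a $2$-connected graph, and the paper devotes a separate argument to gluing the Mondrian diagrams of the blocks so that the block containing the dealternator edge is unaffected. Second, your reduction to the $A$-almost alternating case and the ``no common neighbor'' observation from condition (3A) are unnecessary overhead: the only structural input the argument needs is that the dealternator edge is the unique edge of $G$ between its two endpoints, which already follows from condition (2) (otherwise a flype and a Reidemeister 2 move would render the diagram alternating), and the reduction via mirroring risks producing a diagram of $\overline{L}$ rather than of $L$ as the lemma requires.
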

\begin{proof}
In any almost alternating diagram of $L$, the edge associated to the dealternator is the unique edge between the two vertices it is incident to. If more than one such edge existed, then, after possibly flyping, the diagram could be transformed into an alternating diagram via a Reidemeister 2 move, as in Figure \ref{figure:Reducible}.

First we assume that $L$ is prime, and let $D$ be an almost alternating diagram of $L$.  Color the complementary regions of $D$ in a checkerboard fashion so that the coloring near the dealternator looks like  $\tikz[baseline=.6ex, scale = .4, thick]{
\fill[black!20!white] (0,0) -- (.5,.5) -- (1,0);
\fill[black!20!white] (1,1) -- (.5,.5) -- (0,1);
\draw (0,0) -- (1,1);
\draw (0,1) -- (.3,.7);
\draw (1,0) -- (.7,.3);
}$ and the coloring near every other crossing looks like $\tikz[baseline=.6ex, scale = .4, thick]{
\fill[black!20!white] (0,0) -- (.5,.5) -- (0,1);
\fill[black!20!white] (1,1) -- (.5,.5) -- (1,0);
\draw (0,0) -- (1,1);
\draw (0,1) -- (.3,.7);
\draw (1,0) -- (.7,.3);
}$. Let $G$ be the checkerboard graph of $D$ whose vertices correspond to the shaded regions. Since $D$ is prime, the graph $G$ is $2$-connected. 

Ng \cite[Proposition 11]{Ng:TBKh} gives an algorithm that constructs a Mondrian diagram that contracts to $G$. The algorithm starts by associating a step-shaped cycle in the Mondrian diagram to a cycle of $G$ that bounds a face in its planar embedding (see Figure \ref{figure:Mondrian}). The algorithm completes the Mondrian diagram by adding vertical edges and adding or extending horizontal edges in the interior of the step shaped cycle. Choose the initial cycle to contain the edge associated to the dealternator, and label the vertices incident to the dealternator edge $1$ and $2$. Any new horizontal edges can be chosen to be below the horizontal edge labeled $2$. Therefore the vertical edge associated to the dealternator is the leftmost edge on the top of the horizontal segment labeled $2$ and is the rightmost edge on the bottom of the horizontal segment labeled $1$.

Now suppose that $L$ is composite, rather than prime. Then the checkerboard graph of a diagram of $L$ may not be $2$-connected. A graph that is not $2$-connected can be decomposed into $2$-connected pieces, called blocks, glued together at vertices. Changing the vertices where the blocks are glued together corresponds to taking connected sums of link diagrams using different arcs of the diagram.  Ng constructs a Mondrian diagram associated to a graph that is not $2$-connected by first forming the Mondrian diagram of each block, and then gluing them together by extending and identifying the horizontal lines in the outer step shaped cycle for each block. By carefully choosing what arcs of the diagram are involved in a connected sum operation, we can ensure that there is a $A$-almost alternating diagram where the vertices incident to the dealternator edge are not part of any other block. Hence the associated Mondrian horizontal edges (edges $1$ and $2$ in the above construction) are not extended or identified with any other edges. Thus the argument for the prime case extends to the composite case.
\end{proof}

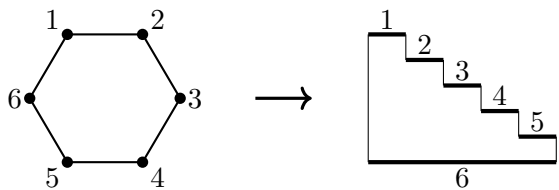
\begin{figure}[h]
$$\begin{tikzpicture}[scale = .5]

\fill (0,1.7) circle (.15);
\fill (1,0) circle (.15);
\fill (1,3.4) circle (.15);
\fill (3,0) circle (.15);
\fill (3,3.4) circle (.15);
\fill (4,1.7) circle (.15);
\draw[thick] (0,1.7) -- (1,0) -- (3,0) -- (4,1.7) -- (3,3.4) -- (1,3.4) -- (0,1.7);

\draw (-.4,1.7) node{6};
\draw (.6,3.8) node{1};
\draw (3.4,3.8) node{2};
\draw (4.4,1.7) node{3};
\draw (3.4,-.4) node{4};
\draw (.6,-.4) node{5};

\draw[ultra thick] (9,0 ) -- (14,0);
\draw[ultra thick] (14,.68) -- (13,.68);
\draw[ultra thick] (13,1.36) -- (12,1.36);
\draw[ultra thick] (12,2.04) -- (11,2.04);
\draw[ultra thick] (11,2.72) -- (10,2.72);
\draw[ultra thick] (10,3.4) -- (9,3.4);

\draw (9,0) -- (9,3.4);
\draw (10,3.4) -- (10,2.72);
\draw (11,2.72) -- (11,2.04);
\draw (12,2.04) -- (12,1.36);
\draw (13,1.36) -- (13,.68);
\draw (14,.68) -- (14,0);

\draw (11.5,-.4) node {6};
\draw (9.5, 3.8) node{1};
\draw (10.5, 3.12) node {2};
\draw (11.5, 2.44) node{3};
\draw (12.5, 1.76) node{4};
\draw (13.5, 1.08) node{5};

\draw[very thick, ->] (6,1.7) -- (7.5,1.7);

\end{tikzpicture}$$
\caption{A cycle in $G$ induces a step-shaped cycle in the Mondrian diagram.}
\label{figure:Mondrian}
\end{figure}

A Mondrian diagram associated to an alternating link diagram yields a Legendrian front diagram of the link by replacing the thick horizontal lines with two-cusped unknots and replacing thin vertical lines with crossings. We slightly modify this construction to give a Legendrian front diagram of an almost alternating link, and we end the paper with the proof of Theorem \ref{theorem:TBAA}.

%\begin{theorem}
%\label{theorem:TBAA2}
%Let $L$ be a non-split almost alternating link with minimal crossing almost alternating diagram $D$. Let $w(D)$ be the writhe of $D$, and let $s_A(D)$ and $s_B(D)$ be the number of components in the all-$A$ and all-$B$ states of $D$ respectively.
%\begin{enumerate}
%\item If $\operatorname{adj}(u_1,u_2)=0$, then 
%$$w(D) -s_A(D) \leq \mtb(L) \leq w(D) - s_A(D) +1.$$
%\item If $\operatorname{adj}(u_1,u_2)=2$, then
%$$\mtb(L)=w(D)-s_A(D).$$
%\item If $\operatorname{adj}(v_1,v_2)=0$, then 
%$$-w(D)-s_B(D)\leq \mtb(\overline{L}) \leq -w(D)-s_B(D)+1.$$
%\item If $\operatorname{adj}(v_1,v_2)=2$, then
%$$\mtb(\overline{L}) = -w(D)-s_B(D).$$
%\end{enumerate}
%\end{theorem}

\begin{proof}[Proof of Theorem \ref{theorem:TBAA}]
Let $D$ be an $A$-Turaev genus one diagram. If $D$ is $A$-adequate, then the result follows from Theorem \ref{theorem:Kalman}. Suppose that $D$ is $A$-almost alternating.

%We prove statements (1) and (2) of the theorem. Statements (3) and (4) follow from Equation \ref{equation:Mirror}.

Theorem \ref{theorem:AAKh} and Corollary \ref{corollary:Diagonal} imply that the Khovanov homology of $D$ is nontrivial in homological grading $i_0(L)=1- c_-(D)$ and quantum grading $j_{\min}(L) = -s_A(D) + c_+(D) -2c_-(D)$. Theorem \ref{theorem:tbkh} then implies that $\mtb(L) \leq j_{\min}(L)-  i_0(L) = w(D) - s_A(D) + 1$. Let $D_{\alt}$ be the alternating diagram obtained by performing a crossing change on the dealternator of $D$. Lemma \ref{lemma:Mondrian} states that $D$ has a Mondrian diagram where the edge associated  to the dealternator is the leftmost edge on the horizontal segment containing its bottom endpoint and the rightmost edge on the horizontal segment containing its top endpoint (see the left side of Figure \ref{figure:MondrianAA}). The Mondrian diagram for $D$ is also a Mondrian diagram for $D_{\alt}$. Construct a Legendrian front diagram $\mathcal{D}_{\alt}$ as in Figure \ref{figure:M2L}. Theorem \ref{theorem:tbalt} implies that $D_{\alt}$ has Thurston Bennequin number $\tb(\mathcal{D}_{\alt}) = w(D_{\alt}) - s_A(D_{\alt})$. Changing the dealternator crossing of the Legendrian front diagram $\mathcal{D}_{\alt}$ results in a diagram $D_{\text{alm}}$ of $D$ that is nearly a Legendrian front diagram, but the dealternator has the strand with positive slope passing over the strand with negative slope. Performing the local isotopy in Figure \ref{figure:MondrianAA} changes $D_{\text{alm}}$ into a Legendrian front diagram $\mathcal{D}_{\text{alm}}$ of $L$ with two fewer cusps than $\mathcal{D}_{\alt}$. Therefore $\tb(\mathcal{D}_{\text{alm}}) = w(D) - (s_A(D_{\text{alt}}) - 1) = w(D)-s_A(D),$ and hence $ w(D) - s_A(D) \leq \mtb(L)$, proving the result.
\end{proof}
\begin{figure}[h]
$$\begin{tikzpicture}

% Mondrian

\draw [ultra thick] (0,1) -- (2,1);
\draw [ultra thick] (1,0) -- (3,0);
\draw (1.5,1) -- (1.5,0);

\draw (2,-1) node {Mondrian};

%Alternating Legendrian

\begin{scope}[xshift = 3.5cm]
 
 \begin{knot}[
 	%draft mode = crossings,
	consider self intersections,
 	clip width = 5,
 	ignore endpoint intersections = false
 ]
 \flipcrossings{1};
  \strand[thick]
  (2.5,-.3) to [out = 180, in = 315, looseness=.5]
  (1.3, -.1) to [out = 135, in = 0, looseness=1]
  (1,0) to [out = 0, in = 225, looseness=1]
  (1.2,.1) to [out = 45, in = 225, looseness = 1]
  (1.8,.9) to [out = 45, in = 180, looseness =1]
  (2,1) to [out =180, in = 315, looseness = 1]
  (1.7,1.1) to [out = 135, in = 0, looseness = .5]
  (.5,1.3);
  
  \strand[thick]
  (.5,1) to [out = 0, in =135, looseness= .5]
  (1.2, .8) to [out = 315, in = 135, looseness = 1]
  (1.8, .2) to [out = 315, in = 180, looseness = .5]
  (2.5,0);
 \end{knot}
 \draw (1.5,-1) node{$\mathcal{D}_{\alt}$};

\end{scope}

% Almost Alternating

\begin{scope}[xshift = 7cm]
 
 \begin{knot}[
 	%draft mode = crossings,
	consider self intersections,
 	clip width = 5,
 	ignore endpoint intersections = false
 ]
% \flipcrossings{1};
  \strand[thick]
  (2.5,-.3) to [out = 180, in = 315, looseness=.5]
  (1.3, -.1) to [out = 135, in = 0, looseness=1]
  (1,0) to [out = 0, in = 225, looseness=1]
  (1.2,.1) to [out = 45, in = 225, looseness = 1]
  (1.8,.9) to [out = 45, in = 180, looseness =1]
  (2,1) to [out =180, in = 315, looseness = 1]
  (1.7,1.1) to [out = 135, in = 0, looseness = .5]
  (.5,1.3);
  
  \strand[thick]
  (.5,1) to [out = 0, in =135, looseness= .5]
  (1.2, .8) to [out = 315, in = 135, looseness = 1]
  (1.8, .2) to [out = 315, in = 180, looseness = .5]
  (2.5,0);
 \end{knot}
 \draw (1.5,-1) node{$D_{\text{alm}}$};

\end{scope}

% Almost Alternating Legendrian

\begin{scope}[xshift = 10.5cm]
 
 \begin{knot}[
 	%draft mode = crossings,
	consider self intersections,
 	clip width = 5,
 	ignore endpoint intersections = false
 ]
% \flipcrossings{1};
  \strand[thick]
  (2.5,-.3) to [out = 180, in = 315, looseness=.5]
  (1.8, .2) to [out = 135, in = 315, looseness=1]
  (1.2,.8) to [out = 135, in = 0, looseness = .5]
  (.5,1.3);

\strand[thick]
(2.5,0) to [out = 180, in = 0, looseness = 1]
(1.8,1) to [out = 180, in = 0 ,looseness = 1]
(1.2,0) to [out  = 180, in = 0, looseness = 1]
(.5,1);

\draw (1.5,-1) node{$\mathcal{D}_{\text{alm}}$};  

 \end{knot}

\end{scope}

\end{tikzpicture}$$
 
\caption{The portion of the Mondrian diagram associated to the dealternator yields the crossing as it looks in the alternating Legendrian diagram $\mathcal{D}_{\alt}$. Changing that crossing gives the no longer Legendrian diagram $D_{\text{alm}}$, but making the pictured local change turns the diagram into the Legendrian front diagram $\mathcal{D}_{\text{alm}}$.}
\label{figure:MondrianAA}
\end{figure}
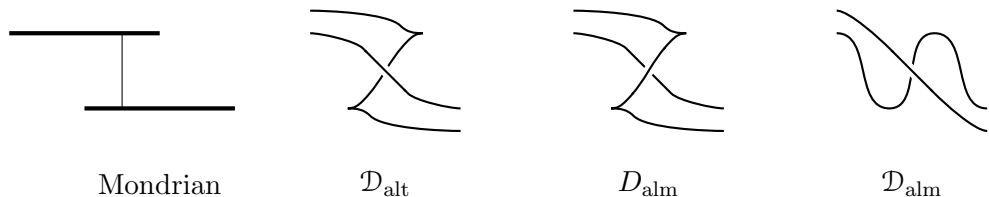

\bibliography{aa}{}
\bibliographystyle {amsalpha}
\end{document}